\newtheorem{theorem}{Theorem}[section]
\newtheorem{lemma}[theorem]{Lemma}
\newtheorem{corollary}[theorem]{Corollary}
\newtheorem*{claim}{Claim}
\theoremstyle{definition}
\theoremstyle{remark}
\newcommand{\N}{\mathbb{N}}
\newcommand{\ben}{\mathbb{N}}
\newcommand{\Fix}{\hbox{\rm Fix}}
\newcommand{\nhat}[1]{\{1,2,\ldots,#1\}}
\newcommand{\pf}{{\mathcal P}_f}
\newcommand{\B}{\mathcal{B}}
\newcommand{\F}{\mathcal{F}}
\newcommand{\G}{\mathcal{G}}
\renewcommand{\H}{\mathcal{H}}
\newcommand{\RR}{\mathcal{R}}
\newcommand{\U}{\mathcal{U}}
\newcommand{\explicitSet}[1]{\left\lbrace #1 \right\rbrace}
\newcommand{\brackets}[1]{\left\langle #1 \right\rangle}
\newcommand{\set}[2]{\explicitSet{#1 \colon #2}}
\newcommand{\seq}[2]{\brackets{#1 \colon #2}}
\renewcommand{\a}{\alpha}
\renewcommand{\b}{\beta}
\newcommand{\g}{\gamma}
\renewcommand{\k}{\kappa}
\newcommand{\s}{\sigma}
\renewcommand{\t}{\tau}
\newcommand{\0}{\emptyset}
\newcommand{\emp}{\emptyset}
\newcommand{\sub}{\subseteq}
\newcommand{\closure}[1]{\overline{#1}}
\newcommand{\card}[1]{\left\lvert #1 \right\rvert}
\newcommand{\continuum}{\mathfrak{c}}
\newcommand{\pseudo}{\mathfrak{p}}
\newcommand{\tower}{\mathfrak{t}}
\newcommand{\ch}{\ensuremath{\mathsf{CH}}\xspace}
\newcommand{\zfc}{\ensuremath{\mathsf{ZFC}}\xspace}
\begin{document}

\title[Factoring a minimal ultrafilter]{Factoring a minimal ultrafilter into a thick part and a syndetic part}
\author{Will Brian}
\address {
W. R. Brian\\
Department of Mathematics and Statistics\\
University of North Carolina at Charlotte\\
9201 University City Blvd.\\
Charlotte, NC 28223, USA}
\email{wbrian.math@gmail.com}
\urladdr{wrbrian.wordpress.com}

\author{Neil Hindman}
\address {
N. Hindman\\
Department of Mathematics\\
Howard University\\
Washington, DC 20059, USA}
\email{nhindman@aol.com}
\urladdr{nhindman.us}

\subjclass[2010]{Primary: 54D35, 54D80, 22A15 Secondary: 06E15, 03E05}
\keywords{Stone-\v{C}ech compactification, minimal ultrafilters, thick sets, syndetic sets, minimal left/right ideals, butterfly and non-normality points}

\thanks{Both authors wish to thank Dona Strauss for her insightful 
comments on an earlier draft of this paper, and for providing us with the proofs of
Lemma~\ref{lem:Gdelta} and Theorem~\ref{thm:non-normal}, allowing us to remove some extraneous 
hypotheses from these results.}

\begin{abstract}
Let $S$ be an infinite discrete semigroup. The operation on
$S$ extends uniquely to the Stone-\v Cech compactification $\beta S$
making $\beta S$ a compact right topological semigroup with
$S$ contained in its topological center. As such, $\beta S$ has a smallest
two sided ideal, $K(\beta S)$. An ultrafilter $p$ on $S$ is {\it minimal\/} if
and only if $p\in K(\beta S)$. 

We show that any minimal ultrafilter $p$ factors
into a thick part and a syndetic part. That is, there exist filters $\F$ and 
$\G$ such that $\F$ consists only of thick sets, $\G$ consists only of 
syndetic sets, and $p$ is the unique ultrafilter containing $\F\cup\G$.

Letting $L=\widehat\F$ and $C=\widehat\G$, the sets of ultrafilters containing
$\F$ and $\G$ respectively, we have that $L$ is a minimal left ideal of $\beta S$,
$C$ meets every minimal left ideal of $\beta S$ in exactly one point, and
$L\cap C=\{p\}$. 
We show further that $K(\beta S)$ can be partitioned into
relatively closed sets, each of which meets each minimal left ideal in exactly
one point. 

With some weak cancellation assumptions on $S$, one has
also that for each minimal ultrafilter $p$,
 $S^*\setminus\{p\}$ is not normal. In
particular, if $p$ is a member of either of the disjoint sets $K(\beta\ben,+)$
or $K(\beta\ben,\cdot)$, then $\ben^*\setminus\{p\}$ is not normal.
\end{abstract}

\maketitle


\section{Introduction}

Throughout this paper $S$ will denote an infinite discrete semigroup
with operation $\cdot$.  The Stone-\v Cech compactification $\beta S$
of $S$ is the set of ultrafilters on $S$, with the principal ultrafilters
being identified with the points of $S$. We let $S^*=\beta S\setminus S$. The operation $\cdot$ extends to 
$\beta S$ so that $(\beta S,\cdot)$ is a right topological semigroup,
meaning that for each $p\in\beta S$, the function $\rho_p$ defined
by $\rho_p(q)=q\cdot p$ is continuous, with $S$ contained in the topological center,
meaning that for each $x\in S$, the function $\lambda_x$ defined by 
$\lambda_x(q)= x\cdot q$ is continuous.  Given $p,q\in \beta S$ and
$A\subseteq S$, we have $A\in p\cdot q$ if and only if $\{x\in S:x^{-1}A\in q\}\in p$,
where $x^{-1}A=\{y\in S:x\cdot y\in A\}$.

As does any compact Hausdorff right topological semigroup, 
$\beta S$ has a smallest two sided ideal, $K(\beta S)$.
According to the structure theorem \cite[Theorem 1.64]{H&S}), we have
\begin{align*}
K(\b S) & \,=\, \textstyle \bigcup \set{L \sub \b S}{L \text{ is a minimal left ideal}} \\
& \,=\, \textstyle \bigcup \set{R \sub \b S}{R \text{ is a minimal right ideal}},
\end{align*}
where each of these unions is a disjoint union. The minimal left ideals are closed while the
minimal right ideals are usually not closed. 
Furthermore, if $L$ is a minimal left ideal and $R$ is a minimal right ideal then
\begin{itemize}
\item $L \cap R  = R \cdot L \neq \0$; 
\item $L \cap R$ is a group, and it contains exactly one element of the set $E(R)$ of
idempotents in $R$, namely the identity of the group.
\end{itemize}
In fact, the structure theorem says more than this, but this summary is sufficient for what follows. Furthermore,
\begin{itemize}
\item If $G = L \cap R$, then the map $(p,e) \mapsto p \cdot e$ is a bijection $G \times E(R) \to R$.
\end{itemize}
This last assertion follows from \cite[Theorem 2.11(b)]{H&S},
which asserts that if $L'$ is a minimal left ideal of $\beta S$ and $e$ is the identity of
$L'\cap R$, then the restriction of $\rho_e$ to $G = L \cap R$ is an isomorphism and
a homeomorphism onto $L'\cap R$. 
The idempotent ultrafilters in $K(\b S)$ are called \emph{minimal idempotents} and
the elements of $K(\beta S)$ are called \emph{minimal ultrafilters}.

We will show in Theorem~\ref{thm:transversals} that if $L$ is a minimal left ideal
of $\beta S$, $R$ is a minimal right ideal, $p\in L\cap R$, and
$C=\overline{p\cdot E(R)}$, then $L\cap C=\{p\}$ and $C$ meets each each minimal left ideal
in exactly one point. Further, $\set{q \cdot E(R')}{R' \text{ is a minimal right ideal and } 
q \in L \cap R'}$ partitions $K(\beta S)$\break
 into relatively closed sets.
The fact that the partition elements are closed in $K(\b S)$ can be seen as a topological addition to the 
(algebraic) structure theorem described above. 
Particularly, in the final bullet point, our result shows that the given bijection has at least one 
nice topological property: the images of the ``vertical sections'' $\{p\} \times E(R)$ of 
$G \times E(R)$, namely the sets of the form $p \cdot E(R)$, are closed in $R$. 
(Note that the images of horizontal sections are also closed in $R$, but 
this is not difficult to prove; it follows from the fact that minimal left ideals of $\b S$ are closed.)

Closed subsets of $\b S$ correspond naturally to filters on $S$. For a filter
$\F$ on $S$, let $\widehat\F=\{p\in\beta S:\F\subseteq p\}=\bigcap\{\overline{A}:A\in\F\}$. Given any nonempty subset
$X$ of $\beta S$, $\bigcap X$ is a filter, and if $\F=\bigcap X$, 
then $\widehat\F=\overline{X}$. In terms of filters, our results show that every 
minimal ultrafilter $p$ on $S$ can be ``factored'' into two filters $\F$ and $\G$, where $\F$ consists entirely of thick sets and $\G$ 
consists entirely of syndetic sets. The ultrafilter $p$ factors into $\F$ and $\G$ in 
the sense that $p$ is the filter generated by $\F \cup \G$.

One immediate consequence of this factorization is that every minimal ultrafilter $p$ on $\N$ is a 
butterfly point of $\N^*$. (When we refer to a minimal ultrafilter on $\N$ without
specifying the operation, we mean a member of $K(\beta\ben,+)$.) Recall that a \emph{butterfly point} of a space $X$ is a 
point $p$ such that, for some $A,B \sub X \setminus \{p\}$, we have $\closure{A} \cap \closure{B} = \{p\}$. 
It is an open problem whether every point of $\N^*$ is a butterfly point (e.g., it is 
``classic problem IX" in Peter Nyikos's \emph{Classic Problems in Topology} series \cite{Nyikos}). 

With a little more work, we show that every minimal ultrafilter $p$ is a \emph{non-normality point} of $\N^*$, which means that $\N^* \setminus \{p\}$ is not normal. It is a longstanding open problem whether every point of $\N^*$ is a non-normality point (e.g., it is 
problem 3 on Jan van Mill's list of open problems in \cite{vanMill}). This problem is closely related to the one mentioned in the previous paragraph, because every non-normality point is also a butterfly point. 
It is known that the answer to both problems is consistently positive: for example, \ch implies that every 
point of $\N^*$ is a non-normality point. (This is due to Rajagopalan \cite{Raj} and Warren \cite{Warren} independently.)
It is also known that, using only \zfc, at least some points of $\N^*$ are non-normality points: for example, 
this holds when $p$ is not Rudin-Frol\'ik minimal \cite{B&S}. Our results add to the list of known non-normality points of $\N^*$. 

The result that minimal ultrafilters are butterfly points (respectively, non-normality points) will be proved in a general setting: it holds in $S^*$ whenever $S$ satisfies certain cancellation properties. 
Under the additional assumption that $S$ is countable, we also prove that for a minimal right ideal $R$ of $\beta S$ and
any minimal ultrafilter $p \in R$, the spaces $E(R)$ and $p\cdot E(R)$ are $P$-spaces and not Borel in $\b S$.

\section{Closed transversals and factoring a minimal ultrafilter}

In this section we establish results that do not require any 
cancellation assumptions about $S$, beginning by producing
closed transversals for the set of minimal left ideals.
(By a {\it transversal\/} for this set, we mean a 
set which meets each minimal left ideal in exactly one point.)

A fact that we will use repeatedly is that if $R$
is a minimal right ideal of $\beta S$ and 
$e\in E(R)$, then $e$ is a left identity for 
$R$, which means that $e \cdot p = e$ for all $p \in R$. (In particular, if $e,f\in E(R)$ then $e\cdot f=f$ and
$f\cdot e=e$.) To see this, note that $e \cdot \beta S$ is
a right ideal contained in $R$, so
$e\cdot\beta S=R$ by minimality. Thus $p\in R$ implies
$p=e\cdot q$ for some $q\in\beta S$, so that
$p = e \cdot q = e\cdot e\cdot q = e\cdot p$.

\begin{theorem}\label{thm:transversals}
Let $L$ and $R$ be minimal left and right ideals of $\b S$, respectively, and let $p \in L \cap R$. Then
$$L \cap \closure{p \cdot E(R)} = \{p\}.$$
Furthermore, 
\begin{enumerate}
\item If $L'$ is any minimal left ideal of $\b S$, then
$$L' \cap \closure{p \cdot E(R)} = \{p \cdot e\}$$
where $e$ is the (unique) idempotent contained in $L' \cap R$. In particular, $\closure{p \cdot E(R)}$ meets every minimal left ideal in exactly one point.
\item $\set{q \cdot E(R)}{q \in L \cap R}$ is a partition of $R$ into relatively closed sets (i.e., they are closed in $R$), and
$$\set{q \cdot E(R')}{R' \text{ is a minimal right ideal and } q \in L \cap R'}$$
is a partition of $K(\b S)$ into relatively closed sets.
\end{enumerate}
\end{theorem}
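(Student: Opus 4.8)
The plan is to establish part~(1) first, since the displayed identity $L \cap \closure{p \cdot E(R)} = \{p\}$ is exactly its special case $L' = L$, and then to read part~(2) off from~(1) together with the bijection $G \times E(R) \to R$ supplied by the structure theorem. The only ingredient I will need beyond what is in the excerpt is the mirror image of the fact recorded just before the theorem: if $L'$ is a minimal left ideal and $e \in E(L')$, then $e$ is a \emph{right} identity for $L'$, i.e.\ $q \cdot e = q$ for every $q \in L'$. This is proved exactly as there: $\b S \cdot e$ is a left ideal contained in $L'$, so $\b S \cdot e = L'$ by minimality, and writing $q = t \cdot e$ with $t \in \b S$ gives $q \cdot e = t \cdot e \cdot e = t \cdot e = q$.

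To prove~(1), fix a minimal left ideal $L'$ and let $e$ be the (unique) idempotent of the group $L' \cap R$. The inclusion $\{p \cdot e\} \subseteq L' \cap \closure{p \cdot E(R)}$ is clear: $p \cdot e \in p \cdot E(R)$, and $\b S \cdot e = L'$ (minimality again) gives $p \cdot e \in L'$. For the reverse inclusion, the one real observation is that the continuous map $\rho_e$ is \emph{constant} on $p \cdot E(R)$: for any $f \in E(R)$, $\rho_e(p \cdot f) = p \cdot f \cdot e = p \cdot e$, because $f \cdot e = e$ (both $e$ and $f$ lie in $E(R)$). Hence $\rho_e\big(\closure{p \cdot E(R)}\big) \subseteq \closure{\{p \cdot e\}} = \{p \cdot e\}$. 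So if $q \in L' \cap \closure{p \cdot E(R)}$ then $q \cdot e = \rho_e(q) = p \cdot e$, while at the same time $q \cdot e = q$ because $e$ is a right identity for $L'$; therefore $q = p \cdot e$. This gives $L' \cap \closure{p \cdot E(R)} = \{p \cdot e\}$, hence the ``in particular'' clause (every minimal left ideal arises as such an $L'$); and taking $L' = L$, for which $e$ is the identity of the group $L \cap R$ and so $p \cdot e = p$, recovers the displayed identity.

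To prove~(2), recall that the structure theorem gives a bijection $G \times E(R) \to R$, $(q,e) \mapsto q \cdot e$, with $G = L \cap R$; since the vertical sections $\{q\} \times E(R)$ partition $G \times E(R)$, their images $q \cdot E(R)$ (for $q \in G$) partition $R$. To see each $q \cdot E(R)$ is closed in $R$, note that as $L'$ ranges over all minimal left ideals the idempotent $e_{L'}$ of $L' \cap R$ ranges over all of $E(R)$ --- each $e \in E(R)$ lies in the minimal left ideal $\b S \cdot e$, where it is the unique idempotent of the group $(\b S \cdot e) \cap R$ --- and that $R = \bigcup_{L'}(L' \cap R)$ since $R \subseteq K(\b S) = \bigcup_{L'} L'$. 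Applying~(1) with $q$ in place of $p$, and using $q \cdot e_{L'} \in R$ (as $q \in R$ and $R$ is a right ideal),
$$\closure{q \cdot E(R)} \cap R \,=\, \bigcup_{L'}\big(\closure{q \cdot E(R)} \cap L' \cap R\big) \,=\, \bigcup_{L'}\{q \cdot e_{L'}\} \,=\, q \cdot E(R),$$
so $q \cdot E(R)$ is relatively closed in $R$. Finally, running the same argument for every minimal right ideal $R'$ (with its group $L \cap R'$), and using that $K(\b S)$ is the disjoint union of all minimal right ideals, we get that $\set{q \cdot E(R')}{R' \text{ a minimal right ideal and } q \in L \cap R'}$ partitions $K(\b S)$; and for each such set, (1) applied with $q, R'$ in place of $p, R$ yields $\closure{q \cdot E(R')} \cap K(\b S) = \bigcup_{L'}\big(\closure{q \cdot E(R')} \cap L'\big) = q \cdot E(R')$, so each partition element is relatively closed in $K(\b S)$.

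I expect the main content --- indeed the only non-routine step --- to be the observation in~(1) that $\rho_e$ simultaneously restricts to the identity on $L'$ and collapses $p \cdot E(R)$ to the single point $p \cdot e$; everything else is bookkeeping with the structure theorem. A secondary point to watch in~(2) is that $\closure{p \cdot E(R)}$ need not be contained in $K(\b S)$, so one cannot simply assert that $p \cdot E(R)$ is closed; one must intersect with $R$ (respectively $K(\b S)$) first, which is precisely what part~(1) lets us do.
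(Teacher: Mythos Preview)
Your proof is correct and follows essentially the same route as the paper's: both arguments hinge on the observation that $\rho_e$ is constant on $p\cdot E(R)$ (hence on its closure) with value $p\cdot e$, and then deduce~(2) from~(1) via the structure-theorem bijection. One small difference worth noting: to pass from $q\cdot e = p\cdot e$ to $q = p\cdot e$ for $q\in L'\cap\closure{p\cdot E(R)}$, the paper first argues that $q\in R$ (by tracking which minimal right ideal contains $q\cdot e$) so that $q$ lies in the group $L'\cap R$ where $e$ is the identity; you instead observe directly that $e$ is a right identity for \emph{all} of $L'$, which gives $q\cdot e=q$ without ever needing to locate $q$ in $R$. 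This is a genuine simplification, and your handling of~(2) --- computing $\closure{q\cdot E(R)}\cap R$ by decomposing $R$ along the minimal left ideals --- is equivalent to but slightly more explicit than the paper's contradiction argument.
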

\begin{proof}
Let $p$ be a minimal ultrafilter in $\b S$, let $L$ and $R$ denote the minimal left 
and right ideals of $\b S$, respectively, that contain $p$, and let $f$ be the identity of
$L\cap R$. Then $p=p\cdot f$ so $p\in L \cap \closure{p \cdot E(R)}$.

Suppose $q \in L \cap \closure{p \cdot E(R)}$. We will show that $p=q$.
Let $R'$ denote the minimal right ideal of $\b S$ containing $q$. 
For each $e \in E(R)$, we have $e \cdot f=f$ so $p \cdot e \cdot f = p \cdot f = p$.
Thus the function $\rho_f$ is constant on the set $p \cdot E(R)$, with value $p$. 
But $\rho_f$ is continuous on all of $\b S$, so this means that $\rho_f$ is constant on 
$\closure{p \cdot E(R)}$ with value $p$. In particular, $q \cdot f = p$.
Because $R'$ is a right ideal containing $q$ we have $q \cdot f \in R'$; but $p \in R$, 
so it follows that $R' = R$. Thus $q$ and $p$ are both members of the group $L \cap R$. 
As $f$ is the identity element of this group, $q \cdot f = p$ implies $q=p$, as desired, completing the proof that $L \cap \closure{p \cdot E(R)} = \{p\}$.

To prove $(1)$, suppose $L'$ is any minimal left ideal of $\b S$, and let $e$ denote the identity element of the group $L' \cap R$. 
Clearly $p \cdot e \in p  \cdot  E(R)$, so that $p \cdot e \in L' \cap \closure{p \cdot E(R)}$, and we wish to 
show that it is the only element of this set. Suppose $q \in L' \cap \closure{p \cdot E(R)}$. Exactly as in the 
proof above, we may show that the function $\rho_e$ is constant on $\closure{p \cdot E(R)}$ with value $p \cdot e$. 
Thus $q \cdot e = p \cdot e = p \cdot e \cdot e$. Observe that $p \cdot e \in L' \cap R$ 
(it is in $L'$ because $e \in L'$ and $L'$ is a left ideal, and it is in $R$ because $p \in R$ and $R$ is a 
right ideal). But $L' \cap R$ is a group with identity element $e$, so, if we know that
$q\in L'\cap R$, then $q \cdot e = p \cdot e \cdot e$ implies $q = p \cdot e$, as desired.
We have that $q\in L'$. To see that $q\in R$, let $R'$ be the minimal right ideal with
$q\in R'$. Since $q\cdot e=p\cdot e$, we have that $R=R'$.

To prove $(2)$, let $G = L \cap R$ and let $h: G \times E(R) \to R$ be the function $h(q,e) = q \cdot e$. 
We noted in the introduction that $h$ is a bijection, which implies that $\set{q \cdot E(R)}{q \in G}$ is 
a partition of $R$, which implies that 
$\set{q \cdot E(R')}{R' \text{{ is a minimal right ideal and }} q \in L \cap R'}$ is a partition of 
$K(\b S)$. Finally, all sets of the form $q \cdot E(R)$ are closed in $K(\b S)$, because any point of 
$\big(K(\b S) \cap \closure{q \cdot E(R)}\big) \setminus q \cdot E(R)$ would be a 
member of some minimal left ideal, and this contradicts $(1)$.
\end{proof}

Given a set $X$, we let $\pf(X)$ be the set of finite nonempty
subsets of $X$. A subset $A$ of $S$ is called 
\begin{itemize}
\item \emph{thick} if for each $F\in\pf(S)$, there exists
$x\in S$ such that $Fx\subseteq A$, or, equivalently, if
 the collection of all sets of the form
$\set{s^{-1}A}{s \in S}$
has the finite intersection property.
\item \emph{syndetic} if there is some  $F\in\pf(S)$ such that 
$\textstyle S = \bigcup_{s \in F}s^{-1}A$.
\end{itemize}
Notice that if $A$ is thick and $B$ is syndetic, then $A\cap B\neq
\emp$. (To see this, pick $F\in\pf(S)$ such that $S = \bigcup_{s \in F}s^{-1}B$
and pick $x\in S$ such that $Fx\subseteq A$. Pick $s\in F$ such that
$sx\in B$. Then $sx\in A\cap B$.)

For the semigroup $(\N, + )$, $A \sub \N$ is thick if and only if it 
contains arbitrarily long intervals, and is syndetic if and only if it 
has bounded gaps, which means that there is some $k \in \N$ such that every 
interval of length $k$ contains a point of $A$.

Let $\Theta$ denote the family of thick subsets of $S$, and let $\Sigma$ denote the 
family of syndetic subsets of $S$. These two families of sets are dual to each other, in the following sense, 
which follows immediately from the definitions.

\begin{lemma}\label{lem:duality}
A set is thick if and only if its complement fails to be syndetic, and it is syndetic if and only if its complement fails to be thick.
\end{lemma}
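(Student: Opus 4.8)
The plan is to reduce both equivalences to a single observation about left preimages together with De Morgan's law. The key computation is that for any $s \in S$ and any $A \sub S$ one has $s^{-1}(S \setminus A) = S \setminus s^{-1}A$, since the condition $sy \notin A$ defining the left-hand side is simply the negation of the condition $sy \in A$ defining $s^{-1}A$.

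First I would establish the first equivalence: $A$ is thick if and only if $S \setminus A$ is not syndetic. Here it is cleanest to use the ``finite intersection property'' reformulation of thickness from the definition above. Then $A$ \emph{fails} to be thick exactly when there is some $F \in \pf(S)$ with $\bigcap_{s \in F} s^{-1}A = \emp$. Taking complements in $S$ and applying the preimage identity, this last condition becomes $S = S \setminus \bigcap_{s \in F} s^{-1}A = \bigcup_{s \in F}\big(S \setminus s^{-1}A\big) = \bigcup_{s \in F} s^{-1}(S \setminus A)$, which is precisely the statement that $S \setminus A$ is syndetic (witnessed by the same $F$). Hence $A$ is thick iff no such $F$ exists iff $S \setminus A$ is not syndetic.

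The second equivalence then follows formally from the first by substitution: applying the first equivalence with $S \setminus A$ in place of $A$ shows that $S \setminus A$ is thick iff $S \setminus (S \setminus A) = A$ is not syndetic; taking contrapositives, $A$ is syndetic iff $S \setminus A$ is not thick. I do not expect any genuine obstacle here, as the statement is a direct unwinding of the definitions; the only minor point to be careful about is to work with the finite-intersection-property formulation of thickness rather than the ``$Fx \sub A$'' formulation, since it is the former that dualizes transparently under complementation.
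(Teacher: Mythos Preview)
Your proof is correct and is precisely the unwinding-of-definitions argument the paper has in mind: the paper gives no explicit proof, merely remarking before the lemma that the duality ``follows immediately from the definitions.'' Your use of the identity $s^{-1}(S\setminus A)=S\setminus s^{-1}A$ together with De Morgan to pass between the finite-intersection-property formulation of thickness and the definition of syndeticity is exactly the intended content of that remark.
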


The families $\Theta$ and $\Sigma$ are related to $K(\b S)$ by the following lemma.

\begin{lemma}\label{lem:BHM}
 If $A \sub S$ then
\begin{enumerate}
\item $A \in \Theta$ if and only if $\closure{A}$ contains a minimal left ideal of $\b S$.
\item $A \in \Sigma$ if and only if $\closure{A}$ meets every minimal left ideal of $\b S$.
\end{enumerate}
\end{lemma}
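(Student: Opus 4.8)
The plan is to prove (1) directly from the definitions and then obtain (2) as a formal consequence of (1) via the duality in Lemma~\ref{lem:duality}. Throughout I would work with the finite-intersection-property reformulation of thickness already recorded in the definition: $A$ is thick exactly when $\set{s^{-1}A}{s \in S}$ has the finite intersection property.

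For the forward direction of (1), I would start from a thick $A$, extend $\set{s^{-1}A}{s \in S}$ to an ultrafilter $q \in \b S$, and check that $A \in p \cdot q$ for every $p \in \b S$: indeed $\{s \in S : s^{-1}A \in q\}$ is all of $S$, hence in $p$, so the description of the extended product gives $A \in p \cdot q$. This shows $\b S \cdot q \sub \closure A$, and since $\b S \cdot q$ is a left ideal it contains a minimal left ideal, which is then contained in $\closure A$. For the reverse direction, given a minimal left ideal $L \sub \closure A$, I would pick any $q \in L$; then $s \cdot q \in L \sub \closure A$ for every $s \in S$ because $L$ is a left ideal, which unwinds to $s^{-1}A \in q$ for all $s$, so $\set{s^{-1}A}{s \in S}$ has the finite intersection property and $A$ is thick.

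To deduce (2), I would use that each $A \sub S$ is clopen in $\b S$, so $\closure{S \setminus A} = \b S \setminus \closure A$. By Lemma~\ref{lem:duality}, $A$ is syndetic iff $S \setminus A$ is not thick, and by part (1) the latter holds iff no minimal left ideal is contained in $\b S \setminus \closure A$. Since minimal left ideals are nonempty, that is the same as saying every minimal left ideal meets $\closure A$, which is the desired conclusion.

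I do not expect a genuine obstacle here. The only background facts used are that every left ideal of $\b S$ contains a minimal left ideal (a standard compactness argument, already available from the structure theory cited in the introduction) and the equivalence of the two formulations of thickness, which is recorded alongside the definition. The mild care needed is in correctly translating ``$A \in s \cdot q$'' into ``$s^{-1}A \in q$'' via the formula for the extended operation, and in noting the clopenness of $A$ in $\b S$ for the complementation step in (2).
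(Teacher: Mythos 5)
Your proof is correct and complete. The paper itself does not prove this lemma but simply cites it (as part of \cite[Theorem 2.9]{BHM}, or \cite[Theorem 4.48]{H&S}); your argument is essentially the standard one underlying those references: for (1) you extend $\set{s^{-1}A}{s\in S}$ to an ultrafilter $q$, observe $\b S\cdot q\sub\closure{A}$, and pass to a minimal left ideal inside that left ideal, with the converse following from $s\cdot q\in L\sub\closure{A}$ unwinding to $s^{-1}A\in q$; and (2) then follows formally from (1), Lemma~\ref{lem:duality}, and the clopenness of $\closure{A}$ in $\b S$. No gaps.
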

\begin{proof}
This is part of \cite[Theorem 2.9]{BHM}, or see \cite[Theorem 4.48]{H&S}.
\end{proof}

Let us say that a filter $\F$ on $S$ is \emph{$\Theta$-maximal} if $\F \sub \Theta$, and if every filter 
properly extending $\F$ contains some set not in $\Theta$. Similarly, 
let us say that a filter $\G$ on $S$ is \emph{$\Sigma$-maximal} if $\G \sub \Sigma$, and if 
every filter properly extending $\G$ contains some set not in $\Sigma$.
The existence of $\Theta$-maximal filters and $\Sigma$-maximal filters is ensured by Zorn's Lemma.

Note that $\Theta$-maximal filters on $\N$ are never ultrafilters: for example, they will 
contain neither the set of even numbers nor the set of odd numbers. Neither are $\Sigma$-maximal 
ultrafilters on $\N$ ever maximal. In fact, if one identifies subsets of $\N$ with points of the Cantor space 
via characteristic functions, then one can show that $\Sigma$ is a meager, measure-zero subset of the 
Cantor space. Hence every $\Sigma$-maximal filter on $\N$ is also meager and null; 
in this sense, these filters are very far from being ultrafilters.

\begin{lemma}\label{lem:thetamaximal}
A filter $\F$ on $S$ is a $\Theta$-maximal filter if and only if $\widehat \F$ is a minimal left ideal of $\b S$.
\end{lemma}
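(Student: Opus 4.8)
\end{lemma}

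\begin{proof}
The plan is to reduce the lemma to the following filter-theoretic sharpening of Lemma~\ref{lem:BHM}(1): for any filter $\F$ on $S$,
$$\F\sub\Theta\qquad\Longleftrightarrow\qquad\widehat\F\ \text{contains a minimal left ideal of}\ \b S.$$
The ``$\Leftarrow$'' implication is immediate, since if a minimal left ideal $L$ is contained in $\widehat\F$, then every $A\in\F$ has $\closure A\supseteq\widehat\F\supseteq L$, so $A$ is thick by Lemma~\ref{lem:BHM}(1). For ``$\Rightarrow$'' I would work with the family $\D=\set{s^{-1}A}{A\in\F\text{ and }s\in S}$. Using the ``equivalently'' clause in the definition of thickness, $\D$ has the finite intersection property: given $A_1,\dots,A_n\in\F$ and $s_1,\dots,s_n\in S$, the set $A=A_1\cap\dots\cap A_n$ lies in $\F$, hence is thick, so $\bigcap_i s_i^{-1}A\neq\emp$, and this set is contained in $\bigcap_i s_i^{-1}A_i$. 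Thus $Y=\bigcap_{B\in\D}\closure B$ is a nonempty closed subset of $\b S$, and it is a left ideal: if $q\in Y$ and $r\in\b S$, then for every $B=t^{-1}A\in\D$ and every $s\in S$ one has $s^{-1}B=(ts)^{-1}A\in\D$, hence $s^{-1}B\in q$; so $\set{s\in S}{s^{-1}B\in q}=S$, which lies in $r$, meaning $B\in r\cdot q$. Since $B\in\D$ was arbitrary, $r\cdot q\in Y$. Now the closed left ideal $Y$ contains a minimal left ideal $L_0$, and $L_0$ contains an idempotent $e$, so $L_0=\b S\cdot e$ and $\D\sub e$ (because $e\in Y$). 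Finally $L_0\sub\widehat\F$: any $q\in L_0$ satisfies $q=q\cdot e$, and for $A\in\F$ we have $s^{-1}A\in\D\sub e$ for every $s\in S$, so $\set{s\in S}{s^{-1}A\in e}=S\in q$, whence $A\in q\cdot e=q$.

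Granting this equivalence, the lemma unwinds. If $\F$ is $\Theta$-maximal, then $\F\sub\Theta$, so there is a minimal left ideal $L\sub\widehat\F$; the filter $\bigcap L=\set{A\sub S}{L\sub\closure A}$ consists of thick sets (Lemma~\ref{lem:BHM}(1)) and contains $\F$ (since $\F\sub\bigcap\widehat\F\sub\bigcap L$), so $\Theta$-maximality forces $\F=\bigcap L$, and therefore $\widehat\F=\closure L=L$ (using $\widehat{\bigcap X}=\closure X$), a minimal left ideal. Conversely, suppose $\widehat\F$ is a minimal left ideal $L$. Then $\F\sub\Theta$ because each $\closure A$ with $A\in\F$ contains $L$. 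If some filter $\G\supsetneq\F$ were contained in $\Theta$, then by the equivalence $\widehat\G$ would contain a minimal left ideal $L''\sub\widehat\G\sub\widehat\F=L$, forcing $L''=L$; then $\widehat\G=L=\widehat\F$, so $\G\sub\bigcap\widehat\G=\bigcap\widehat\F=\F$, contradicting $\G\supsetneq\F$. Hence $\F$ is $\Theta$-maximal.

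I expect the crux to be the ``$\Rightarrow$'' half of the displayed equivalence (the construction of $Y$, the verification that it is a left ideal, and the identification of a minimal left ideal inside $\widehat\F$); the remaining ingredients are standard for $\b S$ and already used in the excerpt (every left ideal contains a minimal left ideal, minimal left ideals are closed and contain idempotents, and $\widehat{\bigcap X}=\closure X$). The only delicate point is the last step of the converse, where I used $\F=\bigcap\widehat\F$: this is the identification of a filter with the closed set it cuts out, as in the filter/closed-set correspondence discussed after Theorem~\ref{thm:transversals}, and it holds automatically for $\Theta$-maximal $\F$ by the first half of the argument. I foresee no other obstacle.
\end{proof}
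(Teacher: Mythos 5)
Your proof is correct, but it cannot be compared with an argument in the paper itself: the paper disposes of this lemma by citing \cite[Proposition 3.2]{Brian2}, so you have supplied a genuine proof where the authors supply a reference. Your route is the natural one, and you correctly avoid the circularity trap: the equivalence you isolate ($\F \sub \Theta$ iff $\widehat\F$ contains a minimal left ideal) is precisely Lemma~\ref{lem:BHMagain}(1), which the paper \emph{derives from} Lemma~\ref{lem:thetamaximal}, so you could not have cited it and instead prove it directly. Your construction of the closed left ideal $Y = \bigcap\set{\closure{s^{-1}A}}{A \in \F,\ s \in S}$, the verification that it is a left ideal via $s^{-1}(t^{-1}A) = (ts)^{-1}A$, and the use of an idempotent $e$ in a minimal left ideal $L_0 \sub Y$ to pull $A \in q\cdot e = q$ back into every $q \in L_0$ are all sound, and the handling of the case where $S$ lacks an identity (so that $Y$ itself need not lie in $\widehat\F$, only $L_0$ does) is exactly right. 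One small imprecision in your closing remark: in the converse direction you invoke $\F = \bigcap\widehat\F$ before $\Theta$-maximality of $\F$ is established, so your justification ``it holds automatically for $\Theta$-maximal $\F$'' does not apply there; but the identity $\F = \bigcap\widehat\F$ holds for \emph{every} filter (if $A \notin \F$ then no member of $\F$ is contained in $A$, so $\F \cup \{S \setminus A\}$ has the finite intersection property and extends to an ultrafilter in $\widehat\F$ omitting $A$), and the paper itself uses this injectivity of $\F \mapsto \widehat\F$ in the proof of Theorem~\ref{thm:factor}, so the gap is purely in the commentary, not in the mathematics.
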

\begin{proof}
This is \cite[Proposition 3.2]{Brian2}.
\end{proof}

\begin{lemma}\label{lem:BHMagain}
Let $\F$ be a filter on $S$. Then
\begin{enumerate}
\item $\F \sub \Theta$ if and only if $\widehat \F$ contains a minimal left ideal.
\item $\F \sub \Sigma$ if and only if $\widehat \F$ meets every minimal left ideal.
\end{enumerate}
\end{lemma}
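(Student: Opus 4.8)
The plan is to reduce everything to Lemma~\ref{lem:BHM} together with the description $\widehat\F = \bigcap_{A \in \F} \closure{A}$, after which two of the four implications are essentially immediate. For the ``if'' direction of $(1)$: if $\widehat\F$ contains a minimal left ideal $L$, then for every $A \in \F$ we have $L \sub \widehat\F \sub \closure{A}$, so $\closure{A}$ contains a minimal left ideal and hence $A \in \Theta$ by Lemma~\ref{lem:BHM}$(1)$; thus $\F \sub \Theta$. The ``if'' direction of $(2)$ is the same argument with ``contains a minimal left ideal'' replaced throughout by ``meets every minimal left ideal'' and with Lemma~\ref{lem:BHM}$(1)$ replaced by Lemma~\ref{lem:BHM}$(2)$.

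For the ``only if'' direction of $(1)$, I would invoke the existence of $\Theta$-maximal filters. Assuming $\F \sub \Theta$, consider the poset of filters on $S$ that contain $\F$ and are contained in $\Theta$, ordered by inclusion. It is nonempty (it contains $\F$), and the union of any chain in it is again a filter contained in $\Theta$, so Zorn's Lemma yields a maximal element $\F'$; maximality in this poset means exactly that $\F'$ is $\Theta$-maximal. By Lemma~\ref{lem:thetamaximal}, $\widehat{\F'}$ is a minimal left ideal, and since $\F \sub \F'$ we have $\widehat{\F'} \sub \widehat\F$. Hence $\widehat\F$ contains a minimal left ideal.

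For the ``only if'' direction of $(2)$, assume $\F \sub \Sigma$ and fix an arbitrary minimal left ideal $L$; the goal is to produce an ultrafilter in $\widehat\F \cap L$. First note that every $B \in \bigcap L$ satisfies $L \sub \closure{B}$, so $\closure{B}$ contains a minimal left ideal and $B \in \Theta$ by Lemma~\ref{lem:BHM}$(1)$; that is, $\bigcap L \sub \Theta$. Next I claim that $\F \cup \bigcap L$ has the finite intersection property: given $A_1, \dots, A_n \in \F$ and $B_1, \dots, B_m \in \bigcap L$, the set $A = A_1 \cap \dots \cap A_n$ lies in $\F \sub \Sigma$ because $\F$ is a filter, and $B = B_1 \cap \dots \cap B_m$ lies in $\bigcap L \sub \Theta$ because $\bigcap L$ is a filter, and a syndetic set always meets a thick set, so $A \cap B \neq \emp$. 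Extending $\F \cup \bigcap L$ to an ultrafilter $p$ gives $\F \sub p$, so $p \in \widehat\F$, and $\bigcap L \sub p$, so $p \in \widehat{\bigcap L} = \closure{L} = L$ (here using that minimal left ideals are closed). Thus $p \in \widehat\F \cap L$, and since $L$ was arbitrary, $\widehat\F$ meets every minimal left ideal.

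I do not expect a genuine obstacle; the only point that needs a moment's care is that $\Theta$ and $\Sigma$ are not closed under finite intersections in general. This is harmless, however, because the finite intersections appearing above are taken inside the filters $\F$ and $\bigcap L$, and hence automatically remain inside $\Sigma$ and $\Theta$ respectively.
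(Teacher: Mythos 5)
Your proof is correct. Three of the four implications are argued exactly as in the paper: the two ``if'' directions via $\widehat\F \sub \closure{A}$ and Lemma~\ref{lem:BHM}, and the ``only if'' direction of $(1)$ via Zorn's Lemma and Lemma~\ref{lem:thetamaximal}. The only place you diverge is the ``only if'' direction of $(2)$: the paper applies compactness of $L$ directly to the family $\set{\closure{A}\cap L}{A\in\F}$, whose finite intersection property follows from Lemma~\ref{lem:BHM}$(2)$ applied to finite intersections of members of $\F$ (which stay in $\Sigma$ because $\F$ is a filter); you instead observe that $\bigcap L \sub \Theta$ and that $\F \cup \bigcap L$ has the finite intersection property by the elementary fact that a thick set meets a syndetic set, then extend to an ultrafilter lying in $\widehat\F \cap L$. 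Both are compactness arguments at heart and both are valid; your version leans on the combinatorial thick-meets-syndetic fact at the level of subsets of $S$, while the paper's stays entirely at the level of closed subsets of $\b S$. Your closing remark correctly identifies and disposes of the one point needing care, namely that $\Theta$ and $\Sigma$ are not closed under intersection but the relevant intersections are taken inside filters.
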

\begin{proof}
If $\F \sub \Theta$, then, by an application of Zorn's Lemma, $\F$ can be extended to a $\Theta$-maximal filter $\G$. But then $\widehat \F \supseteq \widehat \G$, so $\widehat \F$ contains a minimal left ideal by Lemma~\ref{lem:thetamaximal}. This proves the ``only if'' direction of $(1)$.

If $\F \not\sub \Theta$, then there is some $A \in \F \setminus \Theta$. But then $\closure{A} \supseteq \widehat \F$, so $\widehat \F$ contains no minimal left ideals by Lemma~\ref{lem:BHM}. This proves the ``if'' direction of $(1)$.

The ``if'' direction of $(2)$ is proved just as it was for $(1)$. Supposing $\F \not\sub \Sigma$, there is some $A \in \F \setminus \Sigma$. But then $\closure{A} \supseteq \widehat \F$, so $\widehat \F$ fails to meet some minimal left ideal by Lemma~\ref{lem:BHM}. 

For the ``only if'' direction of $(2)$, suppose $\F \sub \Sigma$ and let $L$ be any minimal left ideal. $L$ is closed in $\b S$, hence compact, and 
$\set{\closure{A} \cap L}{A \in \F}$
is a collection of closed subsets of $L$ with the finite intersection property (by Lemma~\ref{lem:BHM}, because $\F \sub \Sigma$). Thus
$$\textstyle \widehat \F \cap L = \left( \bigcap \set{\closure{A}}{A \in \F} \right) \cap L = \bigcap \set{\closure{A} \cap L}{A \in \F} \neq \0$$
by compactness. As $L$ was arbitrary, $\widehat \F$ meets every minimal left ideal.
\end{proof}

In light of this lemma, one might hope that the $\Sigma$-maximal filters correspond 
precisely to closed transversals for the set of minimal left ideals, in the same way that $\Theta$-maximal filters correspond 
to the minimal left ideals themselves. We show in Section~\ref{sec:boo} below that this is at least consistently not the 
case. However, the transversals that we found in Theorem~\ref{thm:transversals} do all correspond to $\Sigma$-maximal filters:

\begin{lemma}\label{lem:sigmamaximal}
Let $R$ be a minimal right ideal of $\b S$, let $p \in R$, and let $\G=\bigcap\big( p\cdot E(R)\big)$. Then $\G$ 
is a $\Sigma$-maximal filter on $S$.
\end{lemma}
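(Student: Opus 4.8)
The plan is to show two things: first, that $\G = \bigcap(p \cdot E(R))$ is a filter consisting only of syndetic sets, and second, that it is maximal among such filters. By Lemma~\ref{lem:BHMagain}(2), a filter $\G$ satisfies $\G \sub \Sigma$ if and only if $\widehat\G$ meets every minimal left ideal. Now $\G = \bigcap(p \cdot E(R))$ means, by the correspondence between nonempty subsets of $\b S$ and filters described in the introduction, that $\widehat\G = \closure{p \cdot E(R)}$. So the first task — checking $\G \sub \Sigma$ — follows immediately from Theorem~\ref{thm:transversals}(1), which says precisely that $\closure{p \cdot E(R)}$ meets every minimal left ideal (in exactly one point, in fact). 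One subtlety: we should note $p \cdot E(R) \neq \0$ (it is nonempty since $E(R) \neq \0$), so $\G$ is a genuine filter and not the improper one.

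For $\Sigma$-maximality, I would argue by contradiction: suppose $\G'$ is a filter properly extending $\G$ with $\G' \sub \Sigma$. Then $\widehat{\G'} \subsetneq \widehat\G = \closure{p \cdot E(R)}$, and by Lemma~\ref{lem:BHMagain}(2) again, $\widehat{\G'}$ still meets every minimal left ideal. The key point is that $\closure{p \cdot E(R)}$ meets each minimal left ideal in \emph{exactly one} point (Theorem~\ref{thm:transversals}(1)), so for each minimal left ideal $L'$, the unique point of $L' \cap \closure{p \cdot E(R)}$ — namely $p \cdot e$ where $e$ is the identity of $L' \cap R$ — must also lie in $\widehat{\G'}$. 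Since $K(\b S)$ is the union of all minimal left ideals and every point of $\closure{p \cdot E(R)}$ lies in $K(\b S)$ (indeed, if one is willing to invoke it, $\closure{p \cdot E(R)} \sub K(\b S)$ because each $p \cdot e \in R \sub K(\b S)$ and $K(\b S)$ is... actually $K(\b S)$ need not be closed, so I should be more careful here — see below), we would get $\closure{p \cdot E(R)} \sub \widehat{\G'}$, contradicting properness of the extension.

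The delicate point — and the step I expect to be the main obstacle — is verifying that $\closure{p \cdot E(R)} \sub \bigcup\{L' : L' \text{ a minimal left ideal}\} = K(\b S)$, since $K(\b S)$ is generally not closed. Here is how I would handle it: I do \emph{not} need all of $\closure{p \cdot E(R)}$ to lie in $K(\b S)$. Instead, I only need that every point of $p \cdot E(R)$ lies in some minimal left ideal (which is clear: $p \cdot e \in L'$ where $L' \cap R$ has identity $e$, since $e \in L'$ and $L'$ is a left ideal), and that these points are \emph{already} forced into $\widehat{\G'}$. Since $\widehat{\G'}$ is closed and contains $p \cdot E(R)$ (each point $p \cdot e$ being the unique point of $\closure{p \cdot E(R)}$ in its minimal left ideal, hence in $\widehat{\G'}$), it follows that $\widehat{\G'} \supseteq \closure{p \cdot E(R)} = \widehat\G$, so $\G' \sub \G$ and the extension was not proper. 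This avoids any claim about $K(\b S)$ being closed, at the cost of a small bookkeeping argument. Once this is in place, the proof is complete.
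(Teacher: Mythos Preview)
Your proof is correct and follows essentially the same approach as the paper: both use Theorem~\ref{thm:transversals}(1) together with Lemma~\ref{lem:BHMagain}(2) to force any proper $\Sigma$-extension $\G'$ of $\G$ to have $\widehat{\G'}$ containing every point $p\cdot e$, contradicting properness. The only cosmetic difference is that the paper picks a single witness $A\in\G'\setminus\G$ and a single $f\in E(R)$ with $A\notin p\cdot f$ to obtain the contradiction at one point, whereas you show $p\cdot E(R)\sub\widehat{\G'}$ globally and take the closure.
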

\begin{proof} By Theorem \ref{thm:transversals} $\widehat\G=\overline{p\cdot E(R)}$ meets every 
minimal left ideal, so by Lemma \ref{lem:BHMagain}, $\G\subseteq \Sigma$. Now
suppose we have a filter $\H\subseteq\Sigma$ such that $\G\subsetneq\H$ and pick
$A\in\H\setminus\G$.  Since $A\notin \G$, pick $f\in E(R)$ such that
$A\notin p\cdot f$. Let $L=\beta S\cdot f$. By Lemma \ref{lem:BHMagain}, $\widehat\H\cap L\neq\emp$
so pick $q\in\widehat\H\cap L$. Since $\widehat \H \subseteq \widehat\G$, $q\in\overline{p\cdot E(R)}$.

Since $A\notin p\cdot f$, $q\neq p\cdot f$, contradicting
Theorem \ref{thm:transversals}(1).\end{proof}

\begin{theorem}\label{thm:factor}
Let $p$ be a minimal ultrafilter on $S$. Then there exist
a $\Theta$-maximal filter $\F$ and
a $\Sigma$-maximal filter $\G$
such that $p$ is the ultrafilter generated by $\F \cup \G$.
Specifically, if $L$ and $R$ are respectively the minimal
left and right ideals of $\beta S$ containing $p$,
then $\F=\bigcap L$ and $\G=\bigcap\big(p\cdot E(R)\big)$ are two such filters.

Moreover, $\F$ is the only $\Theta$-maximal filter contained in $p$, and
$$\F = \set{A \in p}{s^{-1}A \in p \ \mathrm{ for \text{ } all } \ s \in S}.$$
\end{theorem}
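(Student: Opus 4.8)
The plan is to assemble the statement from the lemmas already in hand. First I would verify that $\F = \bigcap L$ is a $\Theta$-maximal filter and $\G = \bigcap(p \cdot E(R))$ is a $\Sigma$-maximal filter: the former is immediate from Lemma~\ref{lem:thetamaximal} since $\widehat{\F} = \closure{L} = L$ is a minimal left ideal (it is closed), and the latter is exactly Lemma~\ref{lem:sigmamaximal}. So the content of the theorem is the ``factoring'' claim, that $p$ is the unique ultrafilter containing $\F \cup \G$.

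For that, observe that $\widehat{\F} = L$ and $\widehat{\G} = \closure{p \cdot E(R)}$, so the set of ultrafilters containing $\F \cup \G$ is exactly $\widehat{\F} \cap \widehat{\G} = L \cap \closure{p \cdot E(R)}$, which equals $\{p\}$ by Theorem~\ref{thm:transversals}. Thus $p$ is the only ultrafilter extending $\F \cup \G$, and in particular $\F \cup \G$ generates a proper filter whose only ultrafilter extension is $p$; since an ultrafilter is the unique ultrafilter extending a filter precisely when that filter is already the ultrafilter, the filter generated by $\F \cup \G$ must be $p$ itself. (Concretely: if $A \in p$ then $\closure{A} \supseteq \{p\} = \widehat{\F} \cap \widehat{\G}$, and by compactness of $\beta S$ together with the finite intersection property, $A$ together with $\F \cup \G$ still has a common ultrafilter extension, forcing $A$ into the generated filter; alternatively just cite that a filter with a unique ultrafilter extension equals that ultrafilter.)

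It remains to prove the ``moreover'' clause. Uniqueness of the $\Theta$-maximal filter contained in $p$: if $\F'$ is $\Theta$-maximal with $\F' \sub p$, then $\widehat{\F'}$ is a minimal left ideal by Lemma~\ref{lem:thetamaximal}, and $p \in \widehat{\F'}$ forces $\widehat{\F'} = L$ (distinct minimal left ideals are disjoint), hence $\F' = \bigcap \widehat{\F'} = \bigcap L = \F$. For the explicit description, let $\F_0 = \set{A \in p}{s^{-1}A \in p \text{ for all } s \in S}$; I would check $\F_0$ is a filter (closure under supersets is clear; closure under intersection uses $s^{-1}(A \cap B) = s^{-1}A \cap s^{-1}B$). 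One inclusion: each $A \in \F_0$ is thick, since $\set{s^{-1}A}{s \in S} \sub p$ has the finite intersection property, so $\F_0 \sub \Theta$; being a filter inside $\Theta$ with $\F_0 \sub p$, it extends to a $\Theta$-maximal filter contained in $p$, which by the uniqueness just proved is $\F$, giving $\F_0 \sub \F$. Conversely, to see $\F \sub \F_0$: fix $A \in \F = \bigcap L$, so $\closure{A} \supseteq L$; then for any $s \in S$, $\lambda_s$ maps $L$ into $L$ (as $L$ is a left ideal) and is continuous, and $\lambda_s^{-1}(\closure{A})$ is clopen containing $L$, which translates to $\closure{s^{-1}A} \supseteq L$; since $p \in L$ this gives $s^{-1}A \in p$, and also $A \in p$, so $A \in \F_0$. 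The main obstacle — really the only nontrivial point — is getting the translation identity $\closure{s^{-1}A} \supseteq L$ right, i.e.\ correctly relating $\lambda_s^{-1}(\closure{A})$ to $\closure{s^{-1}A}$ using that $\{q \in \beta S : A \in s \cdot q\} = \{q : s^{-1}A \in q\} = \closure{s^{-1}A}$; everything else is bookkeeping with the cited lemmas.
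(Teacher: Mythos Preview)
Your argument for the factoring and for the uniqueness of $\F$ matches the paper's exactly: both invoke Lemmas~\ref{lem:thetamaximal} and~\ref{lem:sigmamaximal}, then use Theorem~\ref{thm:transversals} to see that the hull of the filter generated by $\F\cup\G$ is $L\cap\closure{p\cdot E(R)}=\{p\}$, whence that filter is $p$.

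For the explicit description $\F = \set{A \in p}{s^{-1}A \in p \text{ for all } s \in S}$, the paper takes a shortcut by citing \cite[Theorem~6.18]{H&S}, which yields $\widehat{\F_0} = \beta S \cdot p = L$ directly. Your hands-on argument is nearly complete but has one unjustified step: the assertion that a filter $\F_0 \sub \Theta$ with $\F_0 \sub p$ ``extends to a $\Theta$-maximal filter contained in $p$'' is not a valid general principle. For instance, if $L_1 \neq L_2$ are minimal left ideals with $p \in L_1$ and $|L_1| > 1$, then $\bigcap(\{p\} \cup L_2)$ is a filter lying in both $\Theta$ and $p$, yet the only $\Theta$-maximal filter inside $p$ is $\bigcap L_1$, which does not contain it. The fix is immediate from what you already have: you correctly prove $\F \sub \F_0$ and $\F_0 \sub \Theta$, and since $\F_0$ is then a filter in $\Theta$ extending the $\Theta$-maximal filter $\F$, maximality forces $\F_0 = \F$ outright. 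Simply reorder the two inclusions and drop the Zorn step.
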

\begin{proof}
Let $p$ be a minimal ultrafilter on $S$. Let $L$ and $R$ denote respectively the minimal left and right 
ideals of $\b S$ containing $p$. Let $\F =\bigcap L$ and let $\G = \bigcap\big(p \cdot E(R)\big)$. 

$\F$ is $\Theta$-maximal by Lemma~\ref{lem:thetamaximal} and $\G$ is $\Sigma$-maximal by Lemma~\ref{lem:sigmamaximal}. 
Since any thick set meets any syndetic set, $\F\cup\G$ generates a filter
$\U$. Then $\emp\neq\widehat \U \subseteq \widehat \F \cap \widehat G = L \cap \overline{p\cdot E(R)}
=\{p\}$ by Theorem \ref{thm:transversals}. Hence $\widehat \U = \{p\}$, and this means $\U = p$.

To prove the ``moreover'' assertion of the theorem, suppose $\F'$ is any $\Theta$-maximal filter contained in $p$. Then $p \in \widehat \F'$, and $\widehat \F'$ is a minimal left ideal by Lemma~\ref{lem:thetamaximal}. This implies $\widehat \F' = L$, because the minimal left ideals of $\b S$ are disjoint. $\widehat \F' = L = \widehat \F$ implies $\F' = \F$, so $\F$ is the only $\Theta$-maximal filter contained in $p$.

It remains to show $\F = \set{A \in p}{s^{-1}A \in p \text{ for all }s \in S}$. 
Let $\H=\set{A \subseteq S}{s^{-1}A \in p \text{ for all }s \in S}$.
By \cite[Theorem 6.18]{H&S}, $\widehat\H=\beta S\cdot p=L$. Since
$p\in L$, $\H=\set{A \in p}{s^{-1}A \in p \text{ for all }s \in S}$.
Since $\widehat \H=L=\widehat\F$, $\H=\F$.\end{proof}

While a minimal ultrafilter contains exactly one $\Theta$-maximal filter by the previous theorem, 
we see now that it may contain more than one $\Sigma$-maximal filter. 

\begin{theorem}\label{thm:moreSigma} There is a minimal ultrafilter on $\ben$ that
contains more than one $\Sigma$-maximal filter.
\end{theorem}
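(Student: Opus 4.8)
The plan is to exhibit two different minimal right ideals $R_1$ and $R_2$ of $\beta\ben$ and a single minimal ultrafilter $p$ lying in both $p\cdot E(R_1)$ and $p\cdot E(R_2)$ as a point, such that the filters $\G_i=\bigcap\big(p\cdot E(R_i)\big)$ are genuinely different. By Lemma~\ref{lem:sigmamaximal} each $\G_i$ is a $\Sigma$-maximal filter, and by Theorem~\ref{thm:transversals} each $\widehat{\G_i}=\overline{p\cdot E(R_i)}$ is a closed transversal for the minimal left ideals passing through $p$. So the task reduces to showing that a minimal ultrafilter $p$ can lie in two distinct minimal right ideals' ``horizontal slices through $p$'' which trace out different transversals. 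The cleanest way to arrange this: fix a minimal left ideal $L$ and an idempotent $p\in L$ (so $p$ is a minimal idempotent), then note $p\in R_1=p\cdot\beta S$ is its own right ideal; the point is to find a second minimal right ideal $R_2$ with $p\in R_2$, i.e. $p\in L\cap R_2$, and with $p\cdot E(R_2)\neq p\cdot E(R_1)$ as subsets of $\beta\ben$, hence $\bigcap$ of these two sets differ.

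The key steps, in order. First, recall (from the structure theory cited in the introduction) that $L\cap R$ is a group for each minimal right ideal $R$ meeting $L$, with $p$ being its identity iff $p$ is the idempotent in $L\cap R$; so the minimal idempotents in $L$ are exactly the points $L\cap R$ as $R$ ranges over minimal right ideals. Thus I need two distinct minimal right ideals $R_1, R_2$ such that the \emph{same} $p$ is the identity of both $L\cap R_1$ and $L\cap R_2$ — impossible, since distinct $R_i$ give distinct points $L\cap R_i$. So instead I should not insist $p$ be idempotent: take a minimal idempotent $e\in L$, and for any minimal right ideal $R'$ let $e'$ be the idempotent of $L\cap R'$; then $e\cdot e'$ lies in $L$ (since $L$ is a left ideal) and also in $R'$, so $e\cdot e'$ is a point of the group $L\cap R'$. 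The honest plan is: use the two operations. The theorem says "minimal ultrafilter on $\ben$", meaning $K(\beta\ben,+)$. So the relevant semigroup is fixed as $(\ben,+)$. I should find, within $(\beta\ben,+)$, a minimal ultrafilter $p$ and two distinct minimal right ideals through it.

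The real mechanism I expect the authors to use — and which I would pursue — is a cardinality/counting argument: the number of minimal right ideals of $\beta\ben$ is $2^{\continuum}$ (this is standard, e.g.\ via \cite{H&S}), and for a fixed minimal left ideal $L$, the minimal idempotents of $L$ also number $2^{\continuum}$; meanwhile a fixed non-idempotent $p$ in a group $L\cap R$ lies in the unique right ideal $R$. To get $p$ in two right ideals I instead fix $p$ in $L$ and vary the second coordinate: for two minimal right ideals $R_1,R_2$ both meeting $L$, consider whether $p\cdot E(R_1)=p\cdot E(R_2)$. Since $h\colon (L\cap R)\times E(R)\to R$ is a bijection onto $R$, the set $p\cdot E(R_i)$ (for $p$ ranging over $L$, thought of as $p\cdot e_i$ with $e_i$ the idempotent of $L\cap R_i$) is a transversal, and two such transversals through the same point $p$ coincide iff $R_1 = R_2$ generate the same ``vertical fibering'' — which in general they do not. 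The concrete construction: pick any minimal idempotent $e$; pick a minimal right ideal $R_1\ni e$ and a second minimal right ideal $R_2$ with $e\cdot E(R_2)\ni e$ (automatic: $e\cdot e_2$ where $e_2$ is the idempotent of $L\cap R_2$ need not be $e$, so instead take $p = e$ itself and note $e\in e\cdot E(R_2)$ iff $e = e\cdot f$ for some $f\in E(R_2)$, i.e.\ $e\in L\cap R_2$, forcing $R_2 = R_1$).

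Given the above circularity, the approach I would actually commit to is: \textbf{do not require the two $\Sigma$-maximal filters to arise from the canonical construction of Theorem~\ref{thm:factor}.} Instead, let $p$ be any minimal idempotent of $(\beta\ben,+)$, let $\G_1=\bigcap\big(p\cdot E(R)\big)$ with $R=p\cdot\beta\ben$ the minimal right ideal of $p$, and separately construct a $\Sigma$-maximal filter $\G_2\subseteq p$ with $\G_2\neq\G_1$ by a direct Zorn's-lemma argument: start from a suitable syndetic set $A\in p$ with $A\notin\G_1$ (such $A$ exists because $\G_1$ is not an ultrafilter, indeed $\G_1$ is meager/null as noted after Lemma~\ref{lem:thetamaximal}, so $p\setminus\G_1$ is large and contains syndetic members), extend $\{A\}\cup\{$co-finite sets$\}$ to a filter inside $p$ all of whose members are syndetic — the obstacle being to check this extension stays inside $\Sigma$, which I would handle using Lemma~\ref{lem:BHMagain}: I need the generated filter's hull to meet every minimal left ideal, and since it is contained in $\widehat{p}=\{p\}\cup(\text{stuff})$... no: its hull contains $p$ and $p$ lies in some minimal left ideal, but need \emph{every} minimal left ideal met. \textbf{The main obstacle}, therefore, is precisely ensuring $\Sigma$-maximality of the second filter while keeping it below $p$ and distinct from $\G_1$; I expect the authors resolve this by choosing $R_2$ to be a minimal right ideal \emph{other than} $p\cdot\beta\ben$ that nonetheless has $p$ in its closure-image $\overline{p'\cdot E(R_2)}$ for an appropriate $p'\in L\cap R_2$, equivalently $p = p'\cdot e$ for the idempotent $e$ of $L\cap R_2$ — and then $\G_2 = \bigcap\big(p'\cdot E(R_2)\big)$, with $p'\neq p$ in general, works and differs from $\G_1$ because distinct right ideals give distinct transversals (two transversals agreeing at $p$ and everywhere else would force $E(R_1)\cdot$ and $E(R_2)\cdot$ to induce the same map $L\to K(\beta\ben)$, hence $R_1 = R_2$). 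Verifying $p\in\overline{p'\cdot E(R_2)}$ for some such $p'$ — i.e.\ solving $p = p'\cdot e$ in the group $L\cap R_2$, which is always possible by taking $p' = p\cdot e^{-1}$ in that group — is the crux, and then $\G_2\neq\G_1$ follows from $R_1\neq R_2$ plus the injectivity built into the structure theorem.
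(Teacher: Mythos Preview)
Your final approach has a fatal error. You propose taking $p\in L\cap R_1$, a second minimal right ideal $R_2\neq R_1$, some $p'\in L\cap R_2$, and then using $\G_2=\bigcap\big(p'\cdot E(R_2)\big)$ as a second $\Sigma$-maximal filter contained in $p$. But $\G_2\subseteq p$ is equivalent to $p\in\widehat{\G_2}=\overline{p'\cdot E(R_2)}$, and Theorem~\ref{thm:transversals} tells you exactly what $L\cap\overline{p'\cdot E(R_2)}$ is: it is the singleton $\{p'\cdot e\}$ where $e$ is the idempotent of $L\cap R_2$, i.e.\ $\{p'\}$. So $p\in\overline{p'\cdot E(R_2)}$ forces $p=p'$, hence $p\in R_2$, contradicting $R_1\neq R_2$. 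More bluntly: $p'\cdot E(R_2)\subseteq R_2$ since $R_2$ is a right ideal, so its closure cannot pick up the point $p\in R_1$. The canonical transversals $\overline{q\cdot E(R)}$ are indexed by minimal right ideals and each one passes through $L$ at a \emph{different} point; you cannot get two of them through the same $p$. Your earlier Zorn-based idea also stalls at exactly the point you flag: extending a syndetic $A\in p\setminus\G_1$ to a $\Sigma$-maximal filter gives no control over whether the result is contained in $p$.

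The paper reverses the order of quantifiers, which dissolves the difficulty. Rather than fixing $p$ and hunting for a second $\Sigma$-maximal filter inside it, the paper first builds a specific $\Sigma$-maximal filter $\H$ and \emph{then} picks $p$ inside $\widehat\H$. Concretely: let $B=(E\cap A)\cup(O\setminus A)$ where $E,O$ are the evens and odds and $A=\bigcup_n[2^{2n},2^{2n+1})$. Then $B$ is syndetic (gaps $\leq 2$) but $B\cap E$ and $B\cap O$ are not. Extend $\{B\}$ to a $\Sigma$-maximal $\H$ by Zorn, and pick any $p\in\widehat\H\cap L$ for some minimal left ideal $L$; such $p$ exists by Lemma~\ref{lem:BHMagain} and is minimal. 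Now compare $\H$ with the canonical $\G=\bigcap\big(p+E(R)\big)$. Since every additive idempotent lies in $E^*$, one has $p+E(R)\subseteq E^*$ or $p+E(R)\subseteq O^*$ according to the parity of $p$, so either $E\in\G$ or $O\in\G$. But neither $E$ nor $O$ can lie in $\H$, since $B\in\H$ and $B\cap E$, $B\cap O\notin\Sigma$. Hence $\G\neq\H$.
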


\begin{proof} Let $E$ be the set of even numbers, let $O$ be the set of odd numbers, 
let $A=\bigcup_{n=0}^\infty\{2^{2n},2^{2n}+1,2^{2n}+2,\ldots,2^{2n+1}-1\}$,
and let $B = (E \cap A) \cup (O \setminus A)$. Then $B$ has no gaps longer than
$2$, so $B$ is syndetic. By a routine 
application of Zorn's Lemma, there is a $\Sigma$-maximal filter $\H$ such that $B \in \H$. Let $L$ be a 
minimal left ideal. Then $\widehat \H \cap L \neq \0$ by Lemma~\ref{lem:BHMagain}; thus there is a 
minimal ultrafilter $p \in \widehat \H \cap L$. 

Let $R$ be the minimal right ideal with $p\in R$ and let $\G=\bigcap\big(p+ E(R)\big)$.
By Theorem \ref{thm:factor}, $\G$ is $\Sigma$-maximal and $\G\subseteq p$.
We claim that $\G\neq\H$. To see this, note that $E(R)\subseteq E^*$ so if $p\in E^*$, then
$p+E(R)\subseteq E^*$ so $E\in\G$. If $p\in O^*$, then
$p+E(R)\subseteq O^*$ so $O\in\G$. But $B\in \H$ and neither $B\cap E$ nor
$B\cap O$ is syndetic so neither $E$ nor $O$ is a member of $\H$. \end{proof}

To end this section, we will demonstrate a technique for building $\Sigma$-maximal filters on $\N$ that offers some control over the filter obtained (more control, anyway, than is given by Zorn's Lemma). This accomplishes three things. One is to demonstrate that there are many closed transversals for the set of minimal left ideals other than the ones of the form $\closure{p \cdot E(R)}$. Another is to lay the foundation for Section 4, which uses a few of the following lemmas. The third is an improvement on Theorem~\ref{thm:moreSigma}: we will show that every minimal ultrafilter on $\ben$ contains more than one $\Sigma$-maximal filter.


\begin{lemma}\label{lem:manyleft} Let $n\in\ben$ and assume $\langle X_i\rangle_{i=1}^n$ is a 
sequence of pairwise disjoint subsets of $S$ such that
$\big(\forall G\in\pf(S)\big)\big(\exists H\in\pf(S)\big)(\forall x\in S)\break
(\exists y\in S)(\exists i\in\nhat{n})\big(Gy\subseteq (Hx\cap X_i)\big)$.
Then for each minimal left ideal $L$ of $\beta S$, there is some $i\in\nhat{n}$
such that $L\subseteq \overline{X_i}$.\end{lemma}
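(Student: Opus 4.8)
The plan is to prove this by induction on $n$, or rather, to think of it directly in terms of the combinatorics of thick-like sets meeting minimal left ideals. The hypothesis on $\langle X_i\rangle_{i=1}^n$ is a kind of ``pigeonholed thickness'': for every $G$ there is an $H$ so that for every translate $Hx$, one can find a sub-translate $Gy$ landing inside $Hx \cap X_i$ for \emph{some} $i$. The conclusion is that one fixed $i$ works for all of a given minimal left ideal $L$. The key observation is that $L$, being a minimal left ideal, is of the form $\b S \cdot e$ for an idempotent $e$, and more usefully, by Lemma~\ref{lem:thetamaximal} and Lemma~\ref{lem:BHMagain}, $L = \widehat\F$ for a $\Theta$-maximal filter $\F$, and a set $A$ meets $L$ (i.e., $\closure{A}\cap L\neq\0$) precisely when $S\setminus A$ does not contain $L$, equivalently when $A$ is ``thick relative to $L$'' in the sense that $A$ lies in some ultrafilter of $L$.

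First I would fix a minimal left ideal $L$ and a minimal idempotent $e$ with $L = \b S\cdot e = \closure{S\cdot e}$ actually I'd rather work with the filter $\F = \bigcap L$, which is $\Theta$-maximal. The crucial reduction: I claim it suffices to show that for \emph{some} $i\in\nhat n$, every set in $\F$ meets $X_i$ — no wait, that's not quite maximality. Let me instead argue by contradiction. Suppose no single $i$ works, i.e., for each $i\in\nhat n$ we have $L\not\subseteq\closure{X_i}$. By Lemma~\ref{lem:BHM}(1) (in its contrapositive form, via Lemma~\ref{lem:duality}), $L\not\subseteq\closure{X_i}$ means $S\setminus X_i$ meets the minimal left ideal $L$; more to the point I want a single set witnessing all the failures simultaneously. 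Since $L$ is closed and each $\closure{X_i}\cap L$ is a proper closed subset of $L$, and there are finitely many of them, $L\setminus\bigcup_{i=1}^n\closure{X_i}$ is a nonempty open subset of $L$... but that's too weak by itself. The right move is to pass to the filter: since $L$ is a minimal left ideal and $\closure{X_i}$ doesn't contain it, there is $A_i\in\F$ with $A_i\cap X_i=\0$ — here I use that $L=\widehat\F=\bigcap\{\closure A: A\in\F\}$ and that $\F$ being an \emph{ultra}-like object on $L$... hmm, $\F$ need not be closed under the relevant operation. Let me be more careful: from $L\subseteq \closure{S\setminus X_i}$ — which holds because $L\not\subseteq\closure{X_i}$ and $L$, being a minimal left ideal contained in $K(\b S)$, has the property that $\closure{X_i}\cap L$ is clopen in $L$ (this uses that $L$ is homeomorphic to a space where translates are clopen? no). Actually the cleanest: $X_i\notin\Theta$ with respect to $L$; use that the complement works; I think the honest path is that since $\F$ is $\Theta$-maximal and $L\not\supseteq\closure{X_i}$, the filter generated by $\F\cup\{S\setminus X_i\}$ still consists of thick sets (its $\widehat{}$ is $\widehat\F\cap\closure{S\setminus X_i}\neq\0$ and contains a minimal left ideal, hence by Lemma~\ref{lem:BHMagain} all its members are thick), so by maximality $S\setminus X_i\in\F$. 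Thus $Y_i := S\setminus X_i\in\F$ for every $i$, hence $Y := \bigcap_{i=1}^n Y_i = S\setminus\bigcup_{i=1}^n X_i\in\F$, so $Y$ is thick.

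Now I would derive the contradiction with the hypothesis. Since $Y$ is thick and we also want to ``absorb'' the $Hx$ constraint: take $G\in\pf(S)$ arbitrary and let $H\in\pf(S)$ be as given by the hypothesis. Because $Y$ is thick, the family $\{s^{-1}Y : s\in S\}$ has the finite intersection property, so there is $x\in S$ with $Hx\subseteq Y$, i.e., $Hx\cap X_i=\0$ for all $i$ — wait, I need $Hx\subseteq Y$, which requires $x\in\bigcap_{h\in H} h^{-1}Y$, available since $Y$ is thick and $H$ is finite. Then the hypothesis gives $y\in S$ and $i\in\nhat n$ with $Gy\subseteq Hx\cap X_i$; but $Hx\cap X_i=\0$, so $Gy=\0$, impossible since $G\neq\0$ forces $Gy\neq\0$. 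This contradiction shows that for some $i\in\nhat n$, $L\subseteq\closure{X_i}$, as desired. The main obstacle — and the step I'd most want to double-check — is the claim that $L\not\supseteq\closure{X_i}$ implies $S\setminus X_i\in\F$ for the $\Theta$-maximal $\F$ with $\widehat\F = L$; this hinges on verifying that the filter generated by $\F\cup\{S\setminus X_i\}$ still consists entirely of thick sets, which follows from Lemma~\ref{lem:BHMagain}(1) once we know $\widehat{\F}\cap\closure{S\setminus X_i}$ contains a minimal left ideal — and it equals $L\cap\closure{S\setminus X_i}$, which is all of $L$ precisely because $L\not\subseteq\closure{X_i}$ and $L$ is contained in every minimal left ideal it meets... no: it equals $L$ because $L\subseteq\closure{S\setminus X_i}$, and \emph{that} is exactly the statement $X_i\notin\Theta$-relative-to-$L$, i.e. the negation of ``$L\subseteq\closure{X_i}$''? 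That equivalence — $L\not\subseteq\closure{X_i}\iff L\subseteq\closure{S\setminus X_i}$ — needs $\closure{X_i}\cap L$ to be clopen in $L$, which I would justify via the fact (Lemma~\ref{lem:BHM} combined with the structure of $L$ as $\b S\cdot e$) that for a minimal left ideal $L$ and any $A\subseteq S$, either $L\subseteq\closure A$ or $L\subseteq\closure{S\setminus A}$. If that dichotomy is not immediately citable, I would instead argue directly: pick a minimal left ideal inside $\widehat\F\cap\closure{S\setminus X_i}$ if that set is nonempty (it is, since it's $\bigcap$ of the closed-under-f.i.p. family $\{\closure A\cap\closure{S\setminus X_i}: A\in\F\}\cup$... ), but cleanest is to simply invoke that $L$, being minimal, satisfies: $A$ restricted to $L$ is ``all or nothing'' in the sense needed — this is essentially the content that minimal left ideals correspond to maximal thick filters, so I would phrase the whole argument filter-theoretically using Lemmas~\ref{lem:thetamaximal} and \ref{lem:BHMagain} to avoid any gap.
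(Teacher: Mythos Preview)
Your argument has a genuine gap at the crucial step, and you correctly flag it as the place you'd ``most want to double-check.'' The claimed implication
\[
L\not\subseteq\overline{X_i}\ \Longrightarrow\ S\setminus X_i\in\F
\]
(equivalently, $L\subseteq\overline{S\setminus X_i}$) is \emph{false} in general. The set $\overline{X_i}\cap L$ is certainly clopen in $L$, but clopen does not mean all-or-nothing: minimal left ideals are large (in $(\N,+)$ they have $2^{\continuum}$ points), and there are plenty of sets $A$ with $\overline{A}\cap L$ a proper nonempty clopen subset of $L$. Your attempted justification via $\Theta$-maximality also fails: the filter generated by $\F\cup\{S\setminus X_i\}$ has hull $L\cap\overline{S\setminus X_i}$, which is a \emph{proper} subset of $L$ whenever $L\cap\overline{X_i}\neq\emptyset$, hence contains no minimal left ideal, hence (by Lemma~\ref{lem:BHMagain}) is \emph{not} contained in $\Theta$. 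So maximality gives you nothing here. Consequently $Y=S\setminus\bigcup_i X_i$ need not be thick (indeed it could be empty), and your contradiction does not go through.

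The paper's proof fixes exactly this point, and it is worth seeing how. Rather than trying to get $S\setminus X_i\in\F$, one uses $\Theta$-maximality in the other direction: from $L\not\subseteq\overline{X_i}$, the filter generated by $\F\cup\{X_i\}$ (with hull $L\cap\overline{X_i}\subsetneq L$) properly extends $\F$, so it must contain a non-thick set; any such set has the form $B_i\cap X_i$ with $B_i\in\F$. Now $\bigcap_i B_i\in\F$ is thick, and each $D_j=(\bigcap_i B_i)\cap X_j$ is not thick, witnessed by some $G_j\in\pf(S)$. Taking $G=\bigcup_j G_j$, one gets $H$ from the hypothesis, then $x$ with $Hx\subseteq\bigcap_i B_i$, and the guaranteed $Gy\subseteq Hx\cap X_j$ forces $G_jy\subseteq D_j$, contradicting the choice of $G_j$. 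The key difference from your attempt is that the argument never needs $L$ to miss $\overline{X_i}$ entirely; it only needs each $X_i$ to be ``thin relative to $\F$,'' and then a \emph{specific} $G$ is built to exploit this.
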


\begin{proof} Let $L$ be a minimal left ideal of $\beta S$. Aiming for a contradiction, suppose
that for each $i\in\nhat{n}$, $L\setminus\overline{X_i}\neq\emp$. Let
$\F=\bigcap L$. By Lemma \ref{lem:thetamaximal}, $\F$ is $\Theta$-maximal.
We claim that for each $i$, there exists $B_i\in\F$ such that
$B_i\cap X_i$ is not thick. If $L\cap\overline{X_i}=\emp$, one may
ket $B_i=S\setminus X_i$. If $L\cap\overline{X_i}\neq\emp$, then
$\G_i=\bigcap (L\cap\overline{X_i})=\{C\subseteq S:(\exists B\in\F)(B\cap X_i\subseteq C)\}$
is a filter properly containing $\F$. (The containment is proper because $L\setminus\overline{X_i}\neq\emp$.) Hence one may pick $B_i\in\F$ such that
$B_i\cap X_i$ is not thick.

For each $j\in\nhat{n}$, let $D_j=(\bigcap_{i=1}^n B_i)\cap X_j$.
Then for $j\in\nhat{n}$, $D_j$ is not thick so pick $G_j\in\pf(S)$ such that
for all $y\in S$, $G_jy\not\subseteq D_j$. Let $G=\bigcup_{j=1}^n G_j$ and 
pick $H\in\pf(S)$ as guaranteed by the hypothesis. 
Now $\bigcap_{i=1}^n B_i\in\F$, so in particular $\bigcap_{i=1}^n B_i$ is thick. Pick
$x\in S$ such that $Hx\subseteq \bigcap_{i=1}^n B_i$. Pick
$y\in S$ and $j\in\nhat{n}$ such that $Gy\subseteq (Hx\cap X_j)$.
Then $G_jy\subseteq D_j$, a contradiction.
\end{proof}

Note that, as a consequence of the following theorem, for each $n\in\ben$,
there is a partition of $K(\beta\ben)$ into $n$ sets, each clopen in $K(\beta\ben)$,
so that every minimal left ideal is contained in one cell of the partition.

\begin{theorem}\label{thm:npartition} Let $n\in\ben$ and let
$\langle Z_j\rangle_{j=1}^n$ be a partition of
$\ben$. Let $\langle I_t\rangle_{t=1}^\infty$ be a partition of
$\ben$ into intervals such that $\displaystyle \lim_{t\to\infty}|I_t|=\infty$.
For $j\in \nhat{n}$, let $X_j=\bigcup_{t\in Z_j}I_t$.
Then for each for each minimal left ideal $L$ of 
$\beta\ben$, there exists $j\in\nhat{n}$ such that $L\subseteq\overline{X_j}$.
If $Z_j$ is infinite, then $\overline{X_j}$ contains a minimal
left ideal of $\beta\ben$. \end{theorem}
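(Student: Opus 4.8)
The plan is to deduce the first assertion directly from Lemma~\ref{lem:manyleft}, and then prove the second assertion (that $\overline{X_j}$ contains a minimal left ideal when $Z_j$ is infinite) by checking that $X_j$ is thick and invoking Lemma~\ref{lem:BHM}(1). For the first part, I would verify the hypothesis of Lemma~\ref{lem:manyleft} for the sequence $\langle X_j\rangle_{j=1}^n$: given a finite $G\subseteq\ben$, set $g=\max G$, and choose $H=\nhat{m}$ where $m$ is large enough that all but finitely many intervals $I_t$ have length at least $m+g+1$ (possible since $|I_t|\to\infty$); also enlarge $H$ to absorb the finitely many short intervals. Then for any $x\in\ben$, the set $Hx=\{x+1,\dots,x+m\}$ meets some interval $I_t$, and if $I_t$ is long enough, $Hx\cap I_t$ still contains a translate $Gy$ of $G$ lying entirely inside $I_t\subseteq X_j$ where $t\in Z_j$. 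A little care is needed to handle the case where $Hx$ straddles two intervals or lands in a short one — this is why $H$ must be chosen to dominate both the short intervals and the gap parameter $g$. Once the hypothesis is verified, Lemma~\ref{lem:manyleft} gives immediately that every minimal left ideal $L$ satisfies $L\subseteq\overline{X_j}$ for some $j\in\nhat{n}$.

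For the second assertion, suppose $Z_j$ is infinite. I claim $X_j=\bigcup_{t\in Z_j}I_t$ is thick: given any $k\in\ben$, since $Z_j$ is infinite and $|I_t|\to\infty$, there is some $t\in Z_j$ with $|I_t|\geq k$, so $X_j$ contains an interval of length $k$; as $k$ was arbitrary, $X_j$ contains arbitrarily long intervals and hence is thick (using the characterization of thick subsets of $(\ben,+)$ noted after Lemma~\ref{lem:duality}, or directly: for $F\in\pf(\ben)$ with $\operatorname{diam}(F)=d$, pick an interval of length $\geq d+1$ inside $X_j$ and translate $F$ into it). By Lemma~\ref{lem:BHM}(1), $\overline{X_j}$ contains a minimal left ideal of $\beta\ben$.

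The main obstacle is the bookkeeping in verifying the hypothesis of Lemma~\ref{lem:manyleft}: one must choose $H$ so that $Hx$, for \emph{every} $x$, reaches deep enough into the interval partition to capture a full translate of $G$ inside a single $I_t$, simultaneously handling the finitely many intervals that are too short. This is purely a matter of taking $m$ large enough to dominate $\max G$ plus the lengths of the short intervals, but it requires stating the quantifiers in the right order so that $H$ is chosen after $G$ but before $x$, exactly as the lemma demands.

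Finally, for the remark preceding the theorem: applying the theorem to the partition $\langle Z_j\rangle$ with every $Z_j$ infinite (e.g.\ $Z_j$ the set of $t\equiv j\pmod n$), the sets $\overline{X_j}\cap K(\beta\ben)$ partition $K(\beta\ben)$ into $n$ pieces, each of which is clopen in $K(\beta\ben)$ because $X_j$ and its complement $\bigcup_{i\neq j}X_i$ are both unions of the $I_t$'s, so $K(\beta\ben)\setminus\overline{X_j}=K(\beta\ben)\cap\bigcup_{i\neq j}\overline{X_i}$ is relatively open, and each minimal left ideal lies entirely in one cell by the theorem.
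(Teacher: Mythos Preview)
Your proposal is correct and follows essentially the same approach as the paper: verify the combinatorial hypothesis of Lemma~\ref{lem:manyleft} by choosing $H$ large enough (relative to $\max G$ and the finitely many short intervals) so that $H+x$ always contains a translate of $G$ inside a single $I_t$, and handle the second assertion by observing that $X_j$ is thick when $Z_j$ is infinite. The paper's bookkeeping is slightly cleaner than your sketch---it takes $k=\max G$, picks $M$ so that $|I_t|\geq k$ for $t\geq M$, sets $m=\max I_M$ and $H=\nhat{m+k}$, then locates the last interval endpoint $z\leq x+m$ and places $G+y$ just before or just after $z$---but your plan captures the same idea (your phrase ``$|I_t|\geq m+g+1$'' is a slip; the threshold should depend on $g$ alone, with $m$ then chosen to clear the short intervals, exactly as you say later).
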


\begin{proof}  We may presume that for each $t\in\ben$, $\max I_t+1=\min I_{t+1}$.
If $Z_j$ is infinite, then $X_j$ is thick, so the second conclusion is immediate.
To establish the first conclusion we invoke Lemma \ref{lem:manyleft}. That  is, we show that
$$\begin{array}{l}\big(\forall G\in\pf(\ben)\big)\big(\exists H\in\pf(\ben)\big)(\forall x \in\ben)
(\exists y\in\ben)\\(\exists i\in\nhat{n})(G+y\subseteq(H+x)\cap X_i)\,.\end{array}$$ So
let $G\in\pf(\ben)$ and let $k=\max G$. Pick $M\in\ben$ such that for all
$n\geq M$, the length of $I_n$ is at least $k$ and let $m=\max I_M$.
Let $H=\nhat{m+k}$ and let $x\in\ben$. Pick the largest $n$ such that $z=\max I_n\leq x+m$ and note
that $n\geq M$ so that the length of $I_n$ and the length of $I_{n+1}$ are both
at least $k$ and thus $\{z-k+1,z-k+2,\ldots,z\}\subseteq I_n$ and
$\{z+1,z+2,\ldots,z+k\}\subseteq I_{n+1}$. If $z-k\geq x$, let $y=z-k$ so that
$G+y\subseteq\{y+1,y+2,\ldots,y+k\}\subseteq\{x+1,x+2,\ldots,x+m+k\}\cap I_n=H+x\cap I_n$.
If $z-k<x$, let $y=z$ so that
$G+y\subseteq\{y+1,y+2,\ldots,y+k\}\subseteq\{x+1,x+2,\ldots,x+m+k\}\cap I_{n+1}=H+x\cap I_{n+1}$.
\end{proof}

We remark that if $S$ is the free semigroup on a finite alphabet (where the operation $\cdot$ is concatonation), 
if $n\in\ben$, $X_j$ is as in Theorem \ref{thm:npartition} for $j\in \nhat{n}$,
and $Y_j=\{w\in S:\text{ the length of }w\text{ is in }X_j\}$,
then each minimal left ideal of $\beta S$ is contained in 
$\overline{Y_j}$ for some $j\in\nhat{n}$. We leave the details to the
reader.

\begin{theorem}\label{thm:manyright} Let $\RR$ be a finite set of minimal right ideals of $\beta\ben$.
There is a $\Sigma$-maximal filter $\G$ on $\ben$ such that $\widehat \G$ is a closed transversal
for the minimal left ideals of $\beta\ben$, $K(\b\N) \cap \widehat \G\subseteq\bigcup\RR$, and $\widehat\G\cap R\neq\emp$ for every $R\in\RR$.
Furthermore, if $p$ is any minimal ultrafilter contained in one of the members of $\RR$, than we may find such a filter $\G$ with $p \in \widehat\G$.
\end{theorem}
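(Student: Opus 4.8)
The plan is to build, by hand, a suitable closed transversal $T\subseteq\beta\N$ for the minimal left ideals and then set $\G=\bigcap T$. Concretely, I want $T$ closed, meeting each minimal left ideal in exactly one point, with $T\cap K(\beta\N)\subseteq\bigcup\RR$, with $T\cap R\ne\emptyset$ for each $R\in\RR$, and (in the ``furthermore'' case) with $p\in T$. Granting such a $T$, the filter $\G=\bigcap T$ automatically satisfies $\widehat\G=\overline T=T$ (since $T$ is closed), so $\widehat\G$ is a closed transversal with all the asserted properties, and $\G\subseteq\Sigma$ by Lemma~\ref{lem:BHMagain} because $T$ meets every minimal left ideal; the one remaining thing to verify will be that $\G$ is actually $\Sigma$-\emph{maximal}, which I will do at the end by imitating the proof of Lemma~\ref{lem:sigmamaximal}.

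To construct $T$: write $\RR=\{R_1,\dots,R_n\}$ with $n\ge 1$; if the ``furthermore'' hypothesis supplies a minimal ultrafilter $p$ in some $R_i$, relabel so that $i=1$, and otherwise let $p$ be any element of $R_1$ (every element of a minimal right ideal is a minimal ultrafilter). Put $q_1=p$ and choose $q_j\in R_j$ for $2\le j\le n$. Fix a partition $\langle I_t\rangle_{t=1}^\infty$ of $\N$ into intervals with $\lim_t|I_t|=\infty$, and let $\varphi\colon\N\to\N$ send each $m$ to the index $t$ with $m\in I_t$; then $\varphi(p)$ is a nonprincipal ultrafilter on $\N$ (minimal ultrafilters being nonprincipal). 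Choose $Z_1\in\varphi(p)$, taking $Z_1$ co-infinite when $n\ge2$, and split $\N\setminus Z_1$ into infinite sets $Z_2,\dots,Z_n$, so that $\langle Z_j\rangle_{j=1}^n$ partitions $\N$ with every $Z_j$ infinite. Set $X_j=\bigcup_{t\in Z_j}I_t$, so that $\langle X_j\rangle_{j=1}^n$ partitions $\N$ and $X_1=\varphi^{-1}(Z_1)\in p$. Because the $X_j$ are pairwise disjoint subsets of $\N$, the sets $\overline{X_j}$ are pairwise disjoint clopen subsets of $\beta\N$ whose union is $\beta\N$, and Theorem~\ref{thm:npartition} guarantees that every minimal left ideal $L$ lies in exactly one $\overline{X_j}$, while each $\overline{X_j}$ (as $Z_j$ is infinite) contains a minimal left ideal.

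Now define $T=\bigcup_{j=1}^n\bigl(\overline{q_j\cdot E(R_j)}\cap\overline{X_j}\bigr)$, a closed set. Given a minimal left ideal $L$, let $j$ be the unique index with $L\subseteq\overline{X_j}$ and let $e$ be the idempotent of $L\cap R_j$; then $\overline{X_k}\cap L=\emptyset$ for $k\ne j$, and by Theorem~\ref{thm:transversals}(1) we have $\overline{q_j\cdot E(R_j)}\cap L=\{q_j\cdot e\}$, with $q_j\cdot e\in L\cap R_j\subseteq\overline{X_j}$, so $q_j\cdot e\in T$; hence $T\cap L=\{q_j\cdot e\}$. Thus $T$ is a closed transversal. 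Every transversal point has the form $q_j\cdot e\in R_j$, so $T\cap K(\beta\N)\subseteq\bigcup\RR$; applying this to a minimal left ideal inside $\overline{X_j}$ shows $T\cap R_j\ne\emptyset$ for each $j$. Finally, since $X_1\in p$ and $p$ lies in the unique $\overline{X_j}$ containing the minimal left ideal $L_p$ through $p$, that $j$ is $1$, i.e.\ $L_p\subseteq\overline{X_1}$; so if $f$ is the idempotent of $L_p\cap R_1$ then $p=p\cdot f=q_1\cdot f\in\overline{q_1\cdot E(R_1)}\cap\overline{X_1}\subseteq T$.

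It remains to show that $\G=\bigcap T$ is $\Sigma$-maximal, and this is the step I expect to be the main obstacle, since it requires understanding $T$ well enough to run a Lemma~\ref{lem:sigmamaximal}-style argument. The key point is that $\overline{q_j\cdot E(R_j)}\cap\overline{X_j}=\overline{q_j\cdot E_j}$, where $E_j=\{e\in E(R_j):\beta\N\cdot e\subseteq\overline{X_j}\}$: indeed, if $e\in E(R_j)\setminus E_j$ then by Theorem~\ref{thm:npartition} the minimal left ideal $\beta\N\cdot e$ lies in some $\overline{X_k}$ with $k\ne j$, so $q_j\cdot e\in\overline{X_k}$, whence $\overline{q_j\cdot(E(R_j)\setminus E_j)}$ is contained in the closed set $\bigcup_{k\ne j}\overline{X_k}$, which is disjoint from $\overline{X_j}$; deleting those points therefore does not change the intersection with $\overline{X_j}$. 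Consequently $T=\overline{\bigcup_{j}q_j\cdot E_j}$, so $A\in\G$ if and only if $A\in q_j\cdot e$ for all $j$ and all $e\in E_j$. Suppose now that $\H$ is a filter with $\G\subsetneq\H\subseteq\Sigma$, and pick $A\in\H\setminus\G$; choose $j$ and $e\in E_j$ with $A\notin q_j\cdot e$, and put $L=\beta\N\cdot e$, a minimal left ideal contained in $\overline{X_j}$, so $T\cap L=\{q_j\cdot e\}$ exactly as above. By Lemma~\ref{lem:BHMagain} there is $q\in\widehat\H\cap L$; since $\widehat\H\subseteq\widehat\G=T$ we get $q\in T\cap L=\{q_j\cdot e\}$, so $q=q_j\cdot e$, while $A\in\H\subseteq q$ contradicts $A\notin q_j\cdot e$. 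Hence no such $\H$ exists, $\G$ is $\Sigma$-maximal, and the proof is complete. Apart from this final argument, the only real subtlety is arranging the interval/block partition so that a prescribed $p$ lands in $\overline{X_1}$, which is why the map $\varphi$ and its pushforward $\varphi(p)$ enter the construction.
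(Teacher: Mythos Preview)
Your proof is correct and follows essentially the same approach as the paper's: partition $\N$ into $n$ interval-blocks $X_1,\dots,X_n$ via Theorem~\ref{thm:npartition}, glue together the pieces $\overline{q_j\cdot E(R_j)}\cap\overline{X_j}$ to form a closed transversal, and verify $\Sigma$-maximality by the Lemma~\ref{lem:sigmamaximal} argument. The only differences are cosmetic: the paper takes $q_j\in E(R_j)$ for $j\ge 2$ (so that $q_j\cdot E(R_j)=E(R_j)$) and arranges $p\in\overline{X_1}$ by relabelling the $Z_j$ after the fact, whereas you allow arbitrary $q_j\in R_j$ and use the pushforward $\varphi(p)$ to choose $Z_1$ so that $X_1\in p$ from the outset; your identification $\overline{q_j\cdot E(R_j)}\cap\overline{X_j}=\overline{q_j\cdot E_j}$ plays the same role as the paper's equation~$(*)$, which uses clopenness of the $\overline{X_i}$ directly.
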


\begin{proof} Enumerate $\RR$ as $\langle R_i\rangle_{i=1}^n$, and fix $p \in R_1$.
Let 
$\langle X_j\rangle_{j=1}^n$ be as in Theorem \ref{thm:npartition}, assuming that
each $Z_j$ is infinite. Without loss of generality (by relabelling the $Z_j$ if necessary) we may assume that $p \in \closure{X_1}$.
Let 
$$\G = \textstyle \bigcap\big(\big((p+E(R_1)) \cap \closure{X_1}\big) \cup \bigcup_{i=2}^n(E(R_i)\cap\overline{X_i})\big).$$
We show first
that if $L$ is a minimal left ideal of $\beta\ben$, $i\in\nhat{n}$, and
$L\subseteq\overline{X_i}$, then either
\begin{itemize}
\item $i=1$ and $\widehat\G \cap L = \{p+f\}$, where $f$ is the identity of $L \cap R_1$, or
\item $i>1$ and $\widehat\G \cap L = \{f\}$, where $f$ is the identity of $L \cap R_i$.
\end{itemize}
This will establish that $\widehat \G$ 
is a transversal for the minimal left ideals of $\beta\ben$ and that $p \in \widehat\G$. It will
also establish that $\widehat\G \cap R_i\neq\emp$ for each $i\in\nhat{n}$,
because each $X_i$ is thick, which implies that for each $i$ there is some minimal left ideal $L$ with $L \sub \closure{X_i}$.

Observe that 
\begin{equation}\tag{$*$}
\begin{array}{rl}
\widehat\G&=
\textstyle \closure{\big((p+E(R_1)) \cap \closure{X_1}\big) \cup \bigcup_{i=2}^n(E(R_i)\cap\overline{X_i})} \\
&= \textstyle \closure{(p+E(R_1)) \cap \closure{X_1}} \cup \bigcup_{i=2}^n \closure{E(R_i)\cap\overline{X_i}} \\
&= \textstyle \big( \closure{p+E(R_1)} \cap {\closure{X_1}} \big) \cup \bigcup_{i=2}^n (\overline{E(R_i)}\cap\overline{X_i}) \,.
\end{array}
\end{equation}
(The third line follows from the second because the $\closure{X_i}$ are not only closed, but clopen.)

For the first bullet point, suppose $i = 1$, let $L \sub \closure{X_1}$ be a minimal left ideal, and let $f$ be the identity of $L \cap R_1$.
By 
Theorem \ref{thm:transversals}, $L\cap\overline{p+E(R_1)}
=\{p+f\}$. 
Since $L\subseteq\overline{X_1}$, and since $\closure{X_j} \cap \closure{X_1} = \0$ for $j \neq 1$, $(*)$ implies that $\widehat\G\cap L=\overline{p+E(R_1)}\cap L=\{p+f\}$.

For the second bullet point, suppose $i \neq 1$, let $L \sub \closure{X_i}$ be a minimal left ideal, and let $f$ be the identity of $L \cap R_i$.
By 
Theorem \ref{thm:transversals}, $L\cap\overline{E(R_i)}
=\{f\}$. 
Since $L\subseteq\overline{X_i}$, and since $\closure{X_j} \cap \closure{X_i} = \0$ for $j \neq i$, $(*)$ implies that $\widehat\G\cap L=\overline{E(R_i)}\cap L=\{f\}$.

$\widehat\G$ meets every minimal left ideal, so $\G\subseteq\Sigma$ by Lemma~\ref{lem:BHMagain}. To finish the proof, we must show that $\G$ is $\Sigma$-maximal.
Aiming for a contradiction, suppose that $\H$ is a filter contained in $\Sigma$ which properly
contains $\G$ and pick $A\in\H\setminus\G$. Since $A\notin\G$, pick
$$f \in \textstyle \big((p+E(R_1)) \cap \closure{X_1}\big) \cup \bigcup_{i=2}^n(E(R_i)\cap\overline{X_i})$$
such that $A \notin f$. Either
\begin{itemize}
\item $f \in (p+E(R_1)) \cap \closure{X_1}$, or
\item $f \in E(R_j)\cap\overline{X_j}$ for some $j \neq 1$.
\end{itemize}
In either case, let $L=\beta\ben+f$. 
By Lemma \ref{lem:BHMagain}, $L\cap\widehat\H\neq\emp$ so
pick $q \in L\cap\widehat\H$. 
Since $A \in q$ we have $q \neq f$.
But $q \in L\cap\widehat\H\subseteq L\cap\widehat\G = \{f\}$, a contradiction.
\end{proof}

\begin{corollary}
Every minimal ultrafilter on $\ben$ contains more than one $\Sigma$-maximal filter.
\end{corollary}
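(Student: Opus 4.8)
The plan is to produce, for an arbitrary minimal ultrafilter $p$ on $\ben$, two distinct $\Sigma$-maximal filters, each contained in $p$. Let $L$ and $R$ be the minimal left and right ideals of $\beta\ben$ containing $p$. The first filter is the one furnished by Theorem~\ref{thm:factor}, namely $\G = \bigcap\big(p + E(R)\big)$, which is $\Sigma$-maximal and satisfies $\G \subseteq p$. For the second, I would first invoke the (standard) fact that $\beta\ben$ has more than one minimal right ideal --- in fact $2^{\continuum}$ of them (see, e.g., \cite{H&S}) --- and fix a minimal right ideal $R' \neq R$. Applying Theorem~\ref{thm:manyright} to $\RR = \{R, R'\}$ together with the minimal ultrafilter $p \in R$ then yields a $\Sigma$-maximal filter $\H$ with $p \in \widehat{\H}$ (hence $\H \subseteq p$) and with $\widehat{\H} \cap R' \neq \emp$.

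It remains to check that $\G \neq \H$, which I would do by comparing the hulls $\widehat{\G}$ and $\widehat{\H}$. By Theorem~\ref{thm:transversals}(1), $\widehat{\G} = \closure{p + E(R)}$ meets each minimal left ideal $L'$ of $\beta\ben$ in the single point $p + e$, where $e$ is the idempotent of $L' \cap R$; since $p + e \in R$ and $K(\beta\ben)$ is the union of its minimal left ideals, this gives $\widehat{\G} \cap K(\beta\ben) \subseteq R$. Because $R' \subseteq K(\beta\ben)$ and $R \cap R' = \emp$, it follows that $\widehat{\G} \cap R' = \emp$, whereas $\widehat{\H} \cap R' \neq \emp$. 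Thus $\widehat{\G} \neq \widehat{\H}$, and since distinct nonempty closed subsets of $\beta\ben$ determine distinct filters (as used repeatedly above, e.g.\ in the proof of Theorem~\ref{thm:factor}), we conclude $\G \neq \H$. Hence $p$ contains the two distinct $\Sigma$-maximal filters $\G$ and $\H$, as desired.

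I do not expect a genuine obstacle here: the substantive work has already been done in Theorems~\ref{thm:transversals}, \ref{thm:factor}, and~\ref{thm:manyright}, and what remains is essentially bookkeeping together with the one observation that $\widehat{\G}$ is confined to a single minimal right ideal while $\widehat{\H}$ is not. The only place the argument reaches outside the paper is the existence of a second minimal right ideal of $\beta\ben$, which is classical. One should also not forget to verify the trivial containments $\G \subseteq p$ (immediate from Theorem~\ref{thm:factor}, or because $p = p + f \in p + E(R)$ for $f$ the identity of $L \cap R$) and $\H \subseteq p$ (the ``furthermore'' clause of Theorem~\ref{thm:manyright}).
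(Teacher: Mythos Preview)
Your proof is correct and follows essentially the same approach as the paper: both take $\G = \bigcap(p+E(R))$ from Theorem~\ref{thm:factor}, apply Theorem~\ref{thm:manyright} with a second minimal right ideal $R'$ to obtain $\H$, and distinguish the two by observing that $\widehat\G \cap K(\beta\ben) \subseteq R$ (via Theorem~\ref{thm:transversals}) while $\widehat\H$ meets $R'$. The only differences are cosmetic---you spell out a few more details and cite the existence of a second minimal right ideal explicitly.
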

\begin{proof}
Let $p$ be a minimal ultrafilter, let $R$ be the minimal right ideal containing $p$, and let $R'$ be any other minimal right ideal. By Theorem~\ref{thm:factor}, $\G = \bigcap (p+E(R))$ is a $\Sigma$-maximal filter contained in $p$. By the previous theorem, there is a $\Sigma$-maximal filter $\H$ contained in $p$ such that $\widehat\H \cap R' \neq \0$. Theorem~\ref{thm:transversals} implies that $\widehat\G \cap K(\b\N) \sub p+E(R) \sub R$, so that $\widehat\G \cap R' = \0$. Thus $\G \neq \H$.
\end{proof}

\section{Topology in $K(\b S)$}

A set $F\subseteq S$ is a {\it left solution set\/} (respectively a {\it right solution set\/}) if and only if
there exist $a,b\in S$ such that $F=\{x\in S:ax=b\}$ (respectively $F=\{x\in S:xa=b\}$). 
If every left solution set and every right solution set is finite, then $S$ is called {\it weakly cancellative\/}.  
If $|S|=\kappa$ and the union of fewer than $\kappa$ solution sets (left or right) always
has cardinality less than $\kappa$, then $S$ is called {\it very weakly cancellative\/}. Of 
course, if $\kappa=\omega$, then ``weakly cancellative'' and ``very weakly cancellative'' mean the same thing.
We let $U(S)$ denote the set of uniform ultrafilters on $S$. By \cite[Lemma 6.34.3]{H&S}, if $S$
is very weakly cancellative, then $U(S)$ is an ideal of $\beta S$. 

The easy results of the following lemma do not appear to have been written down before.

\begin{lemma}\label{lem:same} Assume that $S$ is very weakly cancellative. Then
$K(\beta S)=K\big(U(S)\big)$, the minimal left ideals of $\beta S$ and $U(S)$ are the
same, and the minimal right ideals of $\beta S$ and $U(S)$ are the same. If
$L$ is a minimal left ideal of $\beta S$ and $p\in L$, then
$L=\beta S\cdot p=S^*\cdot p=U(S)\cdot p$.\end{lemma}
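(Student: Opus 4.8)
The plan is to exploit the fact, given by \cite[Lemma 6.34.3]{H&S}, that $U(S)$ is a \emph{two sided} ideal of $\beta S$. First I would note that $U(S)$ is closed in $\beta S$: an ultrafilter on $S$ is uniform precisely when it contains no subset of $S$ of cardinality $<\card{S}$, so $U(S)=\bigcap\set{\closure{B}}{B\sub S\text{ and }\card{S\setminus B}<\card{S}}$ is an intersection of clopen sets. Being closed and (as an ideal) a subsemigroup, $U(S)$ is itself a compact Hausdorff right topological semigroup, so it has a smallest two sided ideal $K\big(U(S)\big)$. Also, since $K(\beta S)$ is the smallest ideal of $\beta S$ and $U(S)$ is an ideal, $K(\beta S)\sub U(S)$; in particular $U(S)\neq\0$.

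The key structural observation is that, because $U(S)$ is a two sided ideal of $\beta S$, for every $p\in U(S)$ both $U(S)\cdot p$ and $p\cdot U(S)$ are one sided ideals of $\beta S$, not merely of $U(S)$: for instance $\beta S\cdot\big(U(S)\cdot p\big)=\big(\beta S\cdot U(S)\big)\cdot p\sub U(S)\cdot p$. Using this I would show that the minimal left ideals of $\beta S$ and of $U(S)$ coincide. On one hand, a minimal left ideal $L$ of $\beta S$ satisfies $L\sub K(\beta S)\sub U(S)$ and is a left ideal of $U(S)$; if $\0\neq L'\sub L$ is a left ideal of $U(S)$ and $p\in L'$, then $U(S)\cdot p$ is a left ideal of $\beta S$ contained in $L$, hence equal to $L$ by minimality, so $L=U(S)\cdot p\sub L'$ and $L'=L$. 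On the other hand, a minimal left ideal $A$ of $U(S)$ equals $U(S)\cdot p$ for any $p\in A$ (as $U(S)\cdot p$ is a nonempty left ideal of $U(S)$ contained in $A$), hence is a left ideal of $\beta S$; any minimal left ideal $L_0$ of $\beta S$ with $L_0\sub A$ (one exists because every left ideal of $\beta S$ contains a minimal one) is then a left ideal of $U(S)$ contained in $A$, forcing $L_0=A$, so $A$ is a minimal left ideal of $\beta S$. The same argument with $p\cdot U(S)$ in place of $U(S)\cdot p$ shows the minimal right ideals coincide. Finally $K(\beta S)=\bigcup\{L:L\text{ a minimal left ideal of }\beta S\}=\bigcup\{A:A\text{ a minimal left ideal of }U(S)\}=K\big(U(S)\big)$. (This transfer of $K$ and of the minimal one sided ideals to two sided ideals is essentially \cite[Theorem 1.65]{H&S}.)

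For the displayed identities, let $L$ be a minimal left ideal of $\beta S$ and let $p\in L$. Then $p\in L\sub K(\beta S)=K\big(U(S)\big)\sub U(S)$, and since $S$ is infinite every uniform ultrafilter is nonprincipal, so $U(S)\sub S^*\sub\beta S$. Now $\beta S\cdot p\sub L$ because $L$ is a left ideal of $\beta S$ containing $p$; by the previous paragraph $L$ is also a minimal left ideal of $U(S)$, so $U(S)\cdot p$, being a nonempty left ideal of $U(S)$ contained in $L$, equals $L$. Chaining inclusions, $L=U(S)\cdot p\sub S^*\cdot p\sub\beta S\cdot p\sub L$, so all four sets are equal.

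I do not expect a genuine obstacle. The only things requiring care are that $U(S)$ must be used as a \emph{two sided} ideal throughout — this is exactly what makes $U(S)\cdot p$ and $p\cdot U(S)$ ideals of $\beta S$, so that minimality arguments in $\beta S$ apply to them — and that $K\big(U(S)\big)$ is a bona fide object, which is why the closedness of $U(S)$ is recorded at the outset.
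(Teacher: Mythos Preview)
Your proof is correct and takes essentially the same approach as the paper's. The paper cites \cite[Theorem 1.65]{H&S} and \cite[Lemma 1.43(c)]{H&S} to handle the transfer of $K$ and of minimal left ideals from $\beta S$ to the two sided ideal $U(S)$, whereas you unpack these arguments by hand via the observation that $U(S)\cdot p$ is already a left ideal of $\beta S$; the paper also derives $K(\beta S)=K\big(U(S)\big)$ first and the coincidence of minimal one sided ideals afterward, while you reverse that order, but the underlying logic and the final chain $L=U(S)\cdot p\sub S^*\cdot p\sub\beta S\cdot p=L$ are identical.
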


\begin{proof} Since $U(S)$ is an ideal of $\beta S$, $K(\beta S)\subseteq U(S)$ and
thus by \cite[Theorem 1.65]{H&S}, $K\big(U(S)\big)=K(\beta S)$.

Let $T$ be a minimal left ideal of $U(S)$. Since $U(S)$ is a left ideal of $\beta S$,
by \cite[Lemma 1.43(c)]{H&S}, $T$ is a minimal left ideal of $\beta S$.  Now let
$L$ be a minimal left ideal of $\beta S$. Since $U(S)$ is a right ideal of $\beta S$,
$L\cap U(S)\neq\emp$. Since $U(S)$ is a left ideal of $\beta S$, $L\cap U(S)$ is
a left ideal of $\beta S$ contained in $L$, so $L\cap U(S)=L$. That is, $L\subseteq
U(S)$. Thus $L$ is a left ideal of $U(S)$ so pick a minimal left ideal $T$
of $U(S)$ such that $T\subseteq L$. As we just saw, $T$ is a left ideal of $\beta S$
so $T=L$.

The arguments in the paragraph above were completely algebraic, so by a left-right
switch, we have that the minimal right ideals of $\beta S$ and $U(S)$ are the
same.

Finally, let $L$ be a minimal left ideal of $\beta S$ and let
$p\in L$. Then $\beta S\cdot p$ is a left ideal of $\beta S$ contained
in $L$, so $L=\beta S\cdot p$. Also, $U(S)\cdot p$ is a left ideal of
$U(S)$ contained in $L$, so $L=U(S)\cdot p$. Thus
$L=U(S)\cdot p\subseteq S^*\cdot p\subseteq \beta S\cdot p=L$.
\end{proof}

Note the similarity of this lemma with \cite[Theorems 4.36 and 4.37]{H&S}, which state that $S$ is weakly cancellative if and only if $S^*$ is an ideal of $\b S$, in which case $K(S^*) = K(\b S)$, and a set is a minimal left ideal (respectively, minimal right ideal) for $S^*$ if and only if it is a minimal left ideal (respectively, minimal right ideal) for $\b S$. 

Note that very weak cancellativity does not imply $S^*$ is an ideal of $\b S$ (because this is equivalent to weak cancellativity); in general, it may not even be a sub-semigroup of $\b S$.
However, as an immediate corollary to Lemma~\ref{lem:same}, if $S$ is very weakly cancellative then $K(\b S) \sub S^*$.

All the results of this section (except Lemmas \ref{lem:qER} and \ref{lem:muove}) assume that $S$ is 
very weakly cancellative.  
Under the additional assumption that there is a uniform, finite bound on $|\{x\in S:xa=a\}|$ for $a\in S$,
we establish
that if $p$ is a minimal ultrafilter on $S$, then 
\begin{itemize}
\item $p$ is a butterfly point of $S^*$ and, furthermore,
\item $S^*\setminus\{p\}$ is not normal.
\end{itemize}
Under the additional assumption that $S$ is countable, we also show that if $R$ is the minimal right ideal containing $p$, then
\begin{itemize}
\item $p\cdot E(R)$ is a $P$-space, and
\item $p\cdot E(R)$ is not Borel in $\b S$. 
\end{itemize}
Note in particular that these results apply to the
semigroups $(\ben,+)$ and $(\ben,\cdot)$, which are easily seen to satisfy all of the above assumptions. 
The proofs proceed by extracting the topological content of Theorem~\ref{thm:factor}, 
which provides a canonical (and useful) basis for the space $p \cdot E(R)$. 

\begin{lemma}\label{lem:qER} Let $R$ be a minimal right ideal of $\beta S$ and
let $p\in R$. If $q\in p\cdot E(R)$, then $q\cdot E(R)=p\cdot E(R)$.\end{lemma}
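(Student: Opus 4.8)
The plan is to reduce the statement to the fact, recalled just before Theorem~\ref{thm:transversals}, that every idempotent in a minimal right ideal is a left identity for that ideal; in particular $e \cdot f = f$ whenever $e, f \in E(R)$. Granting this, the lemma is a one-line computation and requires no topology and no appeal to the structure theorem beyond that one property.

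First I would fix $q \in p \cdot E(R)$ and write $q = p \cdot e$ for some $e \in E(R)$ (note that then $q \in R$, since $p \in R$ and $R$ is a right ideal, so ``$q \cdot E(R)$'' is of the same shape as ``$p \cdot E(R)$''). Since $e \cdot f = f$ for every $f \in E(R)$, associativity of the operation on $\b S$ gives $q \cdot f = (p \cdot e) \cdot f = p \cdot (e \cdot f) = p \cdot f$ for each $f \in E(R)$. Taking the union over all $f \in E(R)$ yields
$$q \cdot E(R) = \set{q \cdot f}{f \in E(R)} = \set{p \cdot f}{f \in E(R)} = p \cdot E(R),$$
which is exactly the claim. (Equivalently, one may phrase the same computation as $e \cdot E(R) = E(R)$, since $f \mapsto e \cdot f$ is the identity map on $E(R)$, and then $q \cdot E(R) = (p\cdot e)\cdot E(R) = p \cdot \big(e \cdot E(R)\big) = p \cdot E(R)$.)

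I do not expect any genuine obstacle here: the only point that needs a little care is to read $e \cdot E(R)$ as the pointwise product $\set{e \cdot f}{f \in E(R)}$ and to apply associativity at the level of sets rather than merely for individual elements.
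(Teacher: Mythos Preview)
Your proof is correct and uses the same idea as the paper: write $q = p\cdot e$ and use that $e\cdot f = f$ for all $f\in E(R)$ to get $q\cdot f = p\cdot f$. In fact your version is slightly tidier than the paper's, which derives only $q\cdot E(R)\subseteq p\cdot E(R)$ from this computation and then establishes the reverse inclusion by a separate symmetry argument; you correctly observe that the pointwise equality $q\cdot f = p\cdot f$ already gives set equality in one stroke.
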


\begin{proof} Assume that $q\in p\cdot E(R)$ and pick $e\in E(R)$ such that
$q=p\cdot e$. Given any $f\in E(R)$, $q\cdot f=p\cdot e\cdot f=p\cdot f$ so 
$q\cdot E(R)\subseteq p\cdot E(R)$. Now let $L$ be the minimal left ideal
with $p\in L$ and let $f$ be the identity of $L\cap R$. Then $q\cdot f=p\cdot f=p$
so $p\in q\cdot E(R)$ so, as above, $p\cdot E(R)\subseteq q\cdot E(R)$.\end{proof}

For $a\in S$, let $\Fix(a)=\{x\in S:xa=a\}$.
Several results in this section use the hypothesis that there is a uniform, finite bound on the size of the sets $\Fix(a)$. The left-right switch of \cite[Theorem 4.11]{HJS} shows that this 
assumption is strictly weaker than the 
assertion that there is a finite bound on the size of right solution sets.

\begin{lemma}\label{lem:muove} Let $k\in\ben$ and assume that for
each $a\in S$, $|\Fix(a)|\leq k$. Then for each 
$p\in S^*$, $|\{x\in S:x\cdot p=p\}|\leq k$.\end{lemma}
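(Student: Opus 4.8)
The plan is to isolate, and then reduce the lemma to, the following fact about an arbitrary discrete semigroup: if $x\in S$, $p\in S^*$, and $x\cdot p=p$, then $\{a\in S:xa=a\}\in p$. Grant this for the moment. Suppose, toward a contradiction, that $x_0,x_1,\dots,x_k$ are $k+1$ distinct elements of $S$ with $x_i\cdot p=p$ for every $i$. By the fact just quoted, $\{a\in S:x_ia=a\}\in p$ for each $i$, so the finite intersection $\bigcap_{i=0}^k\{a\in S:x_ia=a\}$ belongs to $p$ and is in particular nonempty. Any $a$ in this intersection satisfies $\{x_0,\dots,x_k\}\subseteq\Fix(a)$, contradicting $|\Fix(a)|\le k$. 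Hence $|\{x\in S:x\cdot p=p\}|\le k$, which is the lemma.

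It remains to prove the quoted fact. Put $E=\{a\in S:xa=a\}$ and suppose, for a contradiction, that $D:=S\setminus E\in p$. The combinatorial input we need is a partition $S=B_0\cup B_1\cup B_2$ with the property that, for each $j$, if $a\in B_j$ and $xa\in B_j$ then $xa=a$. Such a partition exists: consider the functional graph of the map $\lambda_x\colon a\mapsto xa$ on $S$ and delete its self-loops (the edges at vertices of $E$); each weakly connected component of a functional graph is a tree or consists of a single cycle with trees attached, so after deleting self-loops every component is a tree or a unicyclic graph whose cycle has length at least two, hence is properly $3$-colourable. A $3$-colouring of this graph is exactly the desired partition. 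Now pick $j$ with $B_j\cap D\in p$. Since $x\cdot p=p$ and $B_j\cap D\in p$, we have $x^{-1}(B_j\cap D)\in p$, so we may choose $a\in(B_j\cap D)\cap x^{-1}(B_j\cap D)$. Then $a\in B_j$, and $xa\in B_j$, and $a\in D$ forces $xa\ne a$ — contradicting the defining property of the partition. Therefore $D\notin p$, i.e. $E\in p$, as needed.

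The one step that requires genuine care is this last fact, and within it the $3$-partition statement: one must recall or verify the structure of functional graphs (each weakly connected component has at most one cycle) and check that after deleting self-loops at most three colours are needed. Everything else is routine ultrafilter bookkeeping, using only that for $x\in S$ one has $A\in x\cdot p$ if and only if $x^{-1}A\in p$. If one prefers to quote rather than reprove the colouring statement, it is the standard fact that a set carrying a fixed-point-free self-map can be $3$-coloured with no monochromatic edge of the associated graph; it is applied here to $\lambda_x$ after removing $E$, with the minor caveat that $\lambda_x$ need not map $S\setminus E$ into itself, which is why it is cleanest to colour the functional graph on all of $S$ with self-loops removed, exactly as above.
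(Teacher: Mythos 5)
Your proof is correct and follows essentially the same route as the paper's: both reduce the lemma to the fact that $x\cdot p=p$ forces $\{a\in S:xa=a\}\in p$, and then intersect the $k+1$ resulting members of $p$ to produce an $a$ with $|\Fix(a)|>k$. The only difference is that the paper simply cites this key fact from \cite[Theorem 3.35]{H&S}, whereas you reprove it from scratch via the three-set ($3$-colouring) lemma for the functional graph of $\lambda_x$ with self-loops removed --- which is in substance how that cited theorem is proved.
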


\begin{proof} Let $p\in S^*$ and suppose we have distinct $x_1,x_2,\ldots,x_{k+1}$ in $S$
such that $x_ip=p$ for each $i$. For $i\in \nhat{k+1}$, let
$D_i=\{a\in S:x_ia=a\}$. Since each $\lambda_{x_i}$ is continuous,
we have by \cite[Theorem 3.35]{H&S} that each $D_i\in p$.
Pick $a\in\bigcap_{i=1}^{k+1}D_i$. Then
$\{x_1,x_2,\ldots,x_{k+1}\}\subseteq \Fix(a)$, a contradiction. \end{proof}

Let us note that the conclusion of this lemma is not a consequence of very weak cancellativity, or even of weak cacellativity. The 
semigroup $(\N,\vee)$, where $a \vee b = \max\{a,b\}$, is weakly cancellative, but $n \vee p = p$ 
for every $p \in \N^*$ and $n \in \N$.

\begin{theorem}\label{thm:basis}
Suppose $S$ is very weakly cancellative. Let $p$ be a minimal ultrafilter on $S$, and let 
$L$ and $R$ denote the minimal left and right ideals of $\beta S$ containing $p$. Then
$$\set{A^* \cap B^*}{L \sub A^* \text{ and } p \cdot E(R) \sub B^*}$$
is a local basis for $p$ in $S^*$.
\end{theorem}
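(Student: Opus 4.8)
The plan is to show that $\set{A^* \cap B^*}{L \sub A^* \text{ and } p \cdot E(R) \sub B^*}$ is (i) a family of neighbourhoods of $p$ in $S^*$, and (ii) a basis, i.e.\ every open neighbourhood of $p$ in $S^*$ contains a set of this form. For (i), note that by Theorem~\ref{thm:factor} we have $\F = \bigcap L$ and $\G = \bigcap\big(p \cdot E(R)\big)$, and $p$ is the ultrafilter generated by $\F \cup \G$. If $L \sub A^*$ then $A \in \F \sub p$, so $p \in A^*$; likewise if $p \cdot E(R) \sub B^*$ then $B \in \G \sub p$, so $p \in B^*$; hence $p \in A^* \cap B^*$, and since $A^* \cap B^* = (A \cap B)^*$ is relatively open in $S^*$, it is a neighbourhood of $p$.

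For (ii), let $U$ be any open neighbourhood of $p$ in $S^*$. Shrinking $U$, we may assume $U = C^*$ for some $C \sub S$ with $C \in p$. Since $p$ is generated by $\F \cup \G$, there are $A_0 \in \F$ and $B_0 \in \G$ with $A_0 \cap B_0 \sub C$, hence $A_0^* \cap B_0^* = (A_0 \cap B_0)^* \sub C^* = U$. Now I want to replace $A_0$ and $B_0$ by sets $A$ and $B$ with $A_0 \sub A$, $B_0 \sub B$ (so $A^* \cap B^* \supseteq A_0^* \cap B_0^*$, but this goes the wrong way) --- so instead the point is to arrange $L \sub A_0^*$ and $p \cdot E(R) \sub B_0^*$ directly. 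But $A_0 \in \F = \bigcap L$ means exactly $L \sub \overline{A_0} = A_0 \cup A_0^*$; since $L \sub S^*$ (minimal left ideals consist of non-principal ultrafilters when $S$ is infinite, and in any case $K(\b S) \sub S^*$ under very weak cancellativity, as noted after Lemma~\ref{lem:same}), this gives $L \sub A_0^*$. Similarly $B_0 \in \G = \bigcap\big(p \cdot E(R)\big)$ gives $p \cdot E(R) \sub \overline{B_0}$, and since $p \cdot E(R) \sub R \sub K(\b S) \sub S^*$, we get $p \cdot E(R) \sub B_0^*$. Hence $A_0^* \cap B_0^*$ is already a member of our family contained in $U$, and (ii) follows.

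The only subtlety --- and the step I would be most careful about --- is the passage from an arbitrary neighbourhood $U$ of $p$ to one of the form $C^*$ with $C \in p$: this uses that $\set{C^*}{C \in p}$ is a local basis for $p$ in $S^*$, which is standard for the Stone--\v{C}ech remainder. After that, the argument is a direct unwinding of the identifications $A^* \cap B^* = (A \cap B)^*$ and $\widehat{\bigcap X} = \overline{X}$ together with the containment $K(\b S) \sub S^*$; no real obstacle remains beyond bookkeeping with the hat/closure notation. One should also double-check that every $A$ with $L \sub A^*$ actually lies in $\F$ (equivalently that $L \sub A^*$ forces $A \in \bigcap L = \F$), which is immediate since $A^* \sub \overline{A}$ and $\widehat{\F} = L$ means $\F = \bigcap L$; thus the family as written coincides with $\set{A^* \cap B^*}{A \in \F,\ B \in \G}$, and a basis of this latter form for the filter generated by $\F \cup \G = p$ is exactly what the first two paragraphs establish.
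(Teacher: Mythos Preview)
Your proof is correct and follows essentially the same route as the paper's: both invoke Theorem~\ref{thm:factor} to write $p$ as the filter generated by $\F \cup \G$ with $\F = \bigcap L$ and $\G = \bigcap\big(p \cdot E(R)\big)$, and then unwind this to see that sets $A^* \cap B^*$ with $A \in \F$, $B \in \G$ form a local base at $p$. You are in fact more careful than the paper about one point the paper leaves implicit: the passage from $A \in \F$ (i.e.\ $L \sub \overline{A}$) to $L \sub A^*$, and likewise from $B \in \G$ to $p \cdot E(R) \sub B^*$, which is exactly where the hypothesis of very weak cancellativity enters via $K(\b S) \sub S^*$.
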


\begin{proof}
 Let $\F=\bigcap L$ and $\G=\bigcap\big(p\cdot E(R)\big)$.
By Theorem \ref{thm:factor}, $p$ is the filter generated by $\F\cup\G$.
That is, $p=\{C\subseteq S:(\exists A\in\F)(\exists B\in\G)
(A\cap B\subseteq C)\}$. Thus, given $C\in p$, pick  $A\in\F$ and
$B\in\G$ such that $A\cap B\subseteq C$. Then $p\in A^*\cap B^*\subseteq C^*$.
\end{proof}

\begin{corollary}\label{cor:thickbasis}
Suppose $S$ is very weakly cancellative. Then
\begin{enumerate}
\item If $L$ is a minimal left ideal of $\beta S$, then
$\set{B^*\cap L}{B \in \Sigma}$ 
is a basis for $L$.
\item If $R$ is a minimal right ideal and $p \in R$, then
$\set{A^*\cap p\cdot E(R)}{A \in \Theta}$
is a basis for $p \cdot E(R)$.
\end{enumerate}
\end{corollary}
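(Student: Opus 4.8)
The plan is to read both statements off Theorem~\ref{thm:basis}, which already provides a local basis $\set{A^* \cap B^*}{L \sub A^* \text{ and } p \cdot E(R) \sub B^*}$ at every minimal ultrafilter $p$ (with $L,R$ its minimal left and right ideals). The key observation is that on each of the subspaces $L$ and $p \cdot E(R)$, one of the two factors becomes redundant: if $L \sub A^*$ then $A^* \cap L = L$, so on $L$ only the syndetic factor $B^*$ matters; dually, if $p \cdot E(R) \sub B^*$ then $B^* \cap \big(p \cdot E(R)\big) = p \cdot E(R)$, so on $p \cdot E(R)$ only the thick factor $A^*$ matters. Everything then reduces to identifying which sets $A$ (resp.\ $B$) can occur.

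For $(1)$ I would fix a minimal left ideal $L$ and an arbitrary point $q \in L$. Since the minimal left ideals of $\beta S$ are pairwise disjoint, $L$ is \emph{the} minimal left ideal containing $q$; let $R_q$ be the minimal right ideal containing $q$. By very weak cancellativity $q \in S^*$, so Theorem~\ref{thm:basis} applies: $\set{A^* \cap B^*}{L \sub A^* \text{ and } q \cdot E(R_q) \sub B^*}$ is a local basis at $q$ in $S^*$, and intersecting with $L$ yields a local basis at $q$ in $L$. Whenever $L \sub A^*$ we have $A^* \cap B^* \cap L = B^* \cap L$, so this collapses to $\set{B^* \cap L}{q \cdot E(R_q) \sub B^*}$. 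But $q \cdot E(R_q) \sub B^*$ means exactly $B \in \bigcap\big(q \cdot E(R_q)\big)$, and this filter is $\Sigma$-maximal by Lemma~\ref{lem:sigmamaximal}, hence contained in $\Sigma$. Thus $\set{B^* \cap L}{B \in \Sigma}$ contains a local basis at each $q \in L$, so it is a basis for $L$.

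For $(2)$ I would argue dually. Fix a minimal right ideal $R$, a point $p \in R$, and an arbitrary $q \in p \cdot E(R)$. Since $R$ is a right ideal and $p \in R$ we get $q \in R$, and since the minimal right ideals of $\beta S$ are pairwise disjoint, $R$ is the minimal right ideal containing $q$; by Lemma~\ref{lem:qER}, $q \cdot E(R) = p \cdot E(R)$. Letting $L_q$ be the minimal left ideal containing $q$, Theorem~\ref{thm:basis} gives the local basis $\set{A^* \cap B^*}{L_q \sub A^* \text{ and } p\cdot E(R) \sub B^*}$ at $q$ in $S^*$; intersecting with $p \cdot E(R)$ and using that $p\cdot E(R) \sub B^*$ forces $B^* \cap \big(p\cdot E(R)\big) = p \cdot E(R)$, this collapses to $\set{A^* \cap \big(p\cdot E(R)\big)}{L_q \sub A^*}$. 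Here $L_q \sub A^*$ means precisely $A \in \bigcap L_q$, a $\Theta$-maximal filter by Lemma~\ref{lem:thetamaximal}, hence contained in $\Theta$. So $\set{A^* \cap \big(p\cdot E(R)\big)}{A \in \Theta}$ contains a local basis at each $q \in p\cdot E(R)$, and is therefore a basis for $p \cdot E(R)$.

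I do not expect a genuine obstacle: the substantive work is already in Theorems~\ref{thm:transversals}, \ref{thm:factor} and \ref{thm:basis}. The only points requiring care are bookkeeping ones: checking that $L$ (resp.\ $R$) really is the minimal left (resp.\ right) ideal attached to each auxiliary point $q$, which is where disjointness of the minimal one-sided ideals enters, and — in $(2)$ — invoking Lemma~\ref{lem:qER} so that the single set $p \cdot E(R)$, rather than a point-dependent set $q \cdot E(R)$, appears in the local basis at every $q \in p \cdot E(R)$.
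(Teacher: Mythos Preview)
Your proposal is correct and follows essentially the same approach as the paper: both arguments apply Theorem~\ref{thm:basis} at an arbitrary point of the subspace, observe that one factor of $A^*\cap B^*$ becomes redundant upon intersecting with $L$ (resp.\ $p\cdot E(R)$), and then check that the surviving factor lies in $\Sigma$ (resp.\ $\Theta$); in part~(2) both proofs invoke Lemma~\ref{lem:qER} to pass from $q\cdot E(R)$ back to $p\cdot E(R)$. The only cosmetic difference is that you cite Lemmas~\ref{lem:sigmamaximal} and~\ref{lem:thetamaximal} to get $B\in\Sigma$ and $A\in\Theta$, whereas the paper appeals directly to Theorem~\ref{thm:transversals} and Lemma~\ref{lem:BHM}.
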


\begin{proof}
To establish (1), assume that $L$ is a minimal left ideal,
let $C\subseteq S$ such that $C^*\cap L\neq\emp$, and pick
$p\in C^*\cap L$. Let $R$ be the minimal right ideal with
$p\in R$. By Theorem \ref{thm:basis} pick $A,B\subseteq S$ such that
$L\subseteq A^*$, $p\cdot E(R)\subseteq B^*$, and
$p\in A^*\cap B^*\subseteq C^*$. By Theorem \ref{thm:transversals},
$\overline{p\cdot E(R)}$ meets every minimal left ideal so by 
Lemma \ref{lem:BHM}, $B\in \Sigma$.
Also $C^*\cap L\supseteq A^*\cap B^*\cap L
=B^*\cap L$.

To establish (2), let $R$ be a minimal right ideal and let
$p \in R$. Let $C\subseteq S$ such that $C^*\cap\big(p\cdot E(R)\big)\neq\emp$,
and pick $q\in C^*\cap\big(p\cdot E(R)\big)$. Let $L$ be the minimal left ideal
with $q\in L$. Then $q\cdot E(R)=p\cdot E(R)$ by Lemma \ref{lem:qER} so by
Theorem \ref{thm:basis} pick $A,B\subseteq S$ such that
$L\subseteq A^*$, $q\cdot E(R)\subseteq B^*$, and
$q\in A^*\cap B^*\subseteq C^*$. Since $L\subseteq A^*$, $A\in \Sigma$ by Lemma \ref{lem:BHM},
and $C^*\cap\big(p\cdot E(R)\big)\supseteq A^*\cap B^*\cap\big(p\cdot E(R)\big)
=A^*\cap\big(p\cdot E(R)\big).$
\end{proof}


\begin{lemma}\label{lem:thickminus} Assume $S$ is very weakly 
cancellative and $|S|=\kappa$. Let $A$ be a thick subset of
$S$ and let $F\subseteq S$ such that $|F|<\kappa$. Then
$A\setminus F$ is thick.\end{lemma}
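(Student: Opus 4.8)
The plan is to unwind the definition of thickness and use the hypothesis $|F| < \kappa$ together with very weak cancellativity to absorb the finitely-or-rather-$<\kappa$-many ``bad'' translates. Recall that $A$ thick means that for every $G \in \pf(S)$ there is $x \in S$ with $Gx \subseteq A$; we must show the same for $A \setminus F$. So fix $G \in \pf(S)$. Using thickness of $A$, we get \emph{lots} of candidate witnesses $x$: in fact the set $W_G = \set{x \in S}{Gx \subseteq A}$ has cardinality $\kappa$. (This itself needs a short argument: if $W_G$ had size $<\kappa$, then picking any larger $G' \supseteq G$ with $G' \in \pf(S)$ and using thickness for $G'$ would give $x$ with $G'x \subseteq A$, hence $Gx \subseteq A$; but one needs $W_{G'} \subseteq W_G$ and should instead argue directly that $W_G$ is infinite-of-size-$\kappa$ because $S$ is infinite and thick sets are ``large''. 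Cleanest: if $x, x'$ are distinct witnesses with $Gx = Gx'$ as sets then for each $g \in G$ there is $g' \in G$ with $gx = g'x'$, so $x$ lies in a right solution set determined by the pair $(g, g')$; hence each fiber of $x \mapsto Gx$ has size $<\kappa$ by very weak cancellativity, and since $A$ contains $\kappa$-many of the sets $Gx$... actually it's simpler to note $W_G \ne \emptyset$ and build $\kappa$-many witnesses by a straightforward recursion using that a thick set contains $Gx$ for arbitrarily ``spread out'' $x$.)

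Granting that $|W_G| = \kappa$, the point is that a witness $x$ fails for $A \setminus F$ only when $Gx \cap F \ne \emptyset$, i.e.\ only when $gx \in F$ for some $g \in G$. For a fixed $g \in G$ and a fixed $b \in F$, the set of $x$ with $gx = b$ is a left solution set, hence of size $<\kappa$. Therefore the set of bad witnesses is $\bigcup_{g \in G}\bigcup_{b \in F}\set{x \in S}{gx = b}$, a union of fewer than $\kappa$ solution sets (there are $|G| \cdot |F| < \kappa$ of them), so by very weak cancellativity it has cardinality $<\kappa$. Since $|W_G| = \kappa$, there is $x \in W_G$ that is not bad, i.e.\ $Gx \subseteq A$ and $Gx \cap F = \emptyset$, so $Gx \subseteq A \setminus F$. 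As $G$ was arbitrary, $A \setminus F$ is thick.

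The main obstacle is the first step: producing $\kappa$-many witnesses $x$ with $Gx \subseteq A$, rather than just one. The definition of thick only literally provides one witness per finite set. The fix is to note that thickness can be iterated: given any finite $G$ and any already-chosen finite collection of witnesses $x_1, \dots, x_m$, consider the finite set $G \cup Gx_1 \cup \dots$—no, better—use that thickness for the larger finite set $G \cup \{s_1, \dots, s_m\}$ (suitable elements) yields a new witness $x$ with $Gx \subseteq A$ and $x$ ``avoiding'' the finitely many previous ones; since $S$ is infinite and left solution sets are finite-or-small, distinct choices of $x$ give distinct translates $Gx$, and one can recurse $\kappa$ times. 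Alternatively, and perhaps most cleanly, one invokes the equivalent formulation from the excerpt that $A$ is thick iff $\set{s^{-1}A}{s \in S}$ has the finite intersection property: for $G = \{g_1, \dots, g_n\}$, the set $\bigcap_{i}g_i^{-1}A$ is nonempty, and in fact it must have size $\kappa$—if it had size $<\kappa$, this would have to be derived from cancellativity, which is exactly the kind of computation the paper is comfortable leaving to the reader. I would phrase the argument around whichever of these is shortest to state rigorously, most likely the recursion, and then close with the counting argument of the previous paragraph.
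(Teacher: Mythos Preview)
Your combinatorial approach is quite different from the paper's, and the counting step (the bad set is a union of $|G|\cdot|F|<\kappa$ left solution sets, hence has size $<\kappa$) is correct, but there is a genuine gap at exactly the point you flag as ``the main obstacle'': establishing that $W_G = \set{x \in S}{Gx \subseteq A}$ has cardinality $\kappa$. None of your sketched fixes work. The recursion idea stalls once $\mu \geq \omega$: thickness hands you one witness for each \emph{finite} test set, and enlarging $G$ by finitely many carefully chosen elements cannot force the new witness to avoid an infinite set of previously chosen $x_\alpha$. The ``distinct fibers'' idea also fails, since $\set{y}{gy = g'y}$ is not a solution set in the sense of the definition and very weak cancellativity says nothing about it. Your final suggestion---that $\bigcap_i g_i^{-1}A$ ``must have size $\kappa$'' by some unspecified computation with cancellativity---simply restates the missing step. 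In fact the assertion ``every thick subset of a very weakly cancellative $S$ has size $|S|$'' is exactly the special case of the present lemma obtained by taking $F$ to be the thick set itself (so that $A\setminus F=\emptyset$ would be thick, a contradiction); you have reduced the lemma to a special case of itself that is not visibly easier to prove by elementary counting.

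The paper instead gives a three-line topological argument that sidesteps the issue entirely. Since $A$ is thick, $\closure{A}$ contains a minimal left ideal $L$ by Lemma~\ref{lem:BHM}. Very weak cancellativity makes $U(S)$ an ideal of $\beta S$, so $L \subseteq U(S)$. Since $|F|<\kappa$, no uniform ultrafilter contains $F$, whence $\closure{F}\cap U(S)=\emptyset$; thus $L \subseteq \closure{A}\setminus\closure{F} = \closure{A\setminus F}$, and $A\setminus F$ is thick. Note that this same argument immediately yields the fact you were missing (any thick set lies in a uniform ultrafilter, hence has size $\kappa$), so your approach \emph{can} be completed---but the missing ingredient is precisely the paper's idea.
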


\begin{proof} Pick a minimal left ideal $L$ of $\beta S$ such that
$L\subseteq \overline{A}$. As we have noted, $U(S)$ is an ideal of $\beta S$, which implies that
$L\subseteq U(S)$. Now $\overline{F}\cap U(S)=\emp$ so
$L\subseteq \closure{A} \setminus \closure{F} = \overline{A\setminus F}$.
\end{proof}

\begin{lemma}\label{lem:leftideals} 
Assume that $S$ is very weakly 
cancellative, $|S|=\kappa$, and $A$ is a thick subset of
$S$. Then $A$ contains $\kappa$ pairwise disjoint thick sets.
\end{lemma}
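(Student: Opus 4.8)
The plan is to construct the $\kappa$ pairwise disjoint thick subsets of $A$ by a transfinite recursion of length $\kappa$: at each stage we choose a single ``translate'' $Fy$ of a finite set and assign it to one of the sets being built, always taking $y$ so that $Fy$ avoids everything chosen at earlier stages. The only substantial input is Lemma~\ref{lem:thickminus} --- removing fewer than $\kappa$ points from a thick set leaves a thick set --- so that ``enough room'' always remains; everything else is cardinal bookkeeping and a routine disjointification.

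In more detail: since $S$ is infinite with $|S|=\kappa$, we have $|\pf(S)|=\kappa$, hence $|\pf(S)\times\kappa|=\kappa$, and we fix a bijective enumeration $\seq{(F_\alpha,\beta_\alpha)}{\alpha<\kappa}$ of $\pf(S)\times\kappa$. Recursively, for $\alpha<\kappa$, suppose finite sets $T_\gamma\sub A$ have been chosen for all $\gamma<\alpha$, and put $E_\alpha=\bigcup_{\gamma<\alpha}T_\gamma$. As $E_\alpha$ is a union of at most $|\alpha|$ finite sets we get $|E_\alpha|<\kappa$, so by Lemma~\ref{lem:thickminus} the set $A\setminus E_\alpha$ is thick; choose $y_\alpha\in S$ with $F_\alpha y_\alpha\sub A\setminus E_\alpha$ and set $T_\alpha=F_\alpha y_\alpha$. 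By construction $T_\alpha\cap E_\alpha=\emptyset$, so $T_\alpha\cap T_\gamma=\emptyset$ for all $\gamma<\alpha$; thus $\seq{T_\alpha}{\alpha<\kappa}$ is a pairwise disjoint family with each $T_\alpha\sub A$.

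Finally, for $\beta<\kappa$ put $A_\beta=\bigcup\set{T_\alpha}{\alpha<\kappa \text{ and } \beta_\alpha=\beta}$. Each $A_\beta$ is a subset of $A$; for $\beta\neq\beta'$ the sets $A_\beta$ and $A_{\beta'}$ are disjoint, being unions of disjoint subfamilies of the pairwise disjoint family $\seq{T_\alpha}{\alpha<\kappa}$; and each $A_\beta$ is thick, since given $F\in\pf(S)$ the pair $(F,\beta)$ equals $(F_\alpha,\beta_\alpha)$ for a unique $\alpha$, and then $Fy_\alpha=T_\alpha\sub A_\beta$ witnesses thickness for $F$. Hence $\set{A_\beta}{\beta<\kappa}$ is a family of $\kappa$ pairwise disjoint thick subsets of $A$. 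There is no real obstacle beyond Lemma~\ref{lem:thickminus}; the one point to watch is that $|E_\alpha|<\kappa$ at every stage, which is precisely why the recursion ranges over $\pf(S)\times\kappa$, handling one translate at a time, rather than attempting to produce $\kappa$ translates of a given finite set at once.
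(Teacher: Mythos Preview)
Your proof is correct and is essentially identical to the paper's argument. The only cosmetic difference is that the paper enumerates $\kappa\times\kappa$ (with a separate enumeration of $\pf(S)$ by $\kappa$) while you enumerate $\pf(S)\times\kappa$ directly; the recursion, the use of Lemma~\ref{lem:thickminus}, and the final partition by the second coordinate are the same.
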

\begin{proof} 
Enumerate $\kappa\times\kappa$ as $\langle \delta(\sigma),\tau(\sigma)\rangle_{\sigma<\kappa}$
and enumerate $\pf(S)$ as $\langle F_\iota\rangle_{\iota<\kappa}$. We inductively
choose $\langle x_\sigma\rangle_{\sigma<\kappa}$ such that for $\sigma<\kappa$,
$F_{\delta(\sigma)}\cdot x_\sigma\subseteq A$ and for $\mu<\sigma<\kappa$,
$F_{\delta(\sigma)}\cdot x_\sigma\cap F_{\delta(\mu)}\cdot x_\mu=\emp$.  Having
chosen $\langle x_\mu\rangle_{\mu<\sigma}$, let
$H=\bigcup_{\mu<\sigma}F_{\delta(\mu)}\cdot x_\mu$. Then
$|H|<\kappa$ so by Lemma \ref{lem:thickminus}, $A\setminus H$ is thick, so
we may pick $x_\sigma$ with $F_{\delta(\sigma)}\cdot x_\sigma\subseteq A\setminus H$.
Having chosen $\langle x_\sigma\rangle_{\sigma<\kappa}$, for each $\eta<\kappa$,
let $A_\eta=\bigcup\{F_{\delta(\sigma)}\cdot x_\sigma:\tau(\sigma)=\eta\}$.
Then $\langle A_\eta\rangle_{\eta<\kappa}$ is a sequence of $\kappa$ pairwise
disjoint thick subsets of $A$.
\end{proof}


\begin{lemma}\label{lem:butterfly}
Suppose $S$ is very weakly cancellative and there is a uniform, finite bound on
$|\Fix(a)|$ for $a\in S$. Let $L$ and $R$ be minimal left and right ideals of $\beta S$, and let $p \in R$. 
Then neither $L$ nor $p \cdot E(R)$ has any isolated points in the topology they inherit from $S^*$.
\end{lemma}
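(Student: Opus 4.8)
The plan is to show that no point $q$ of $L$ (respectively $p \cdot E(R)$) is isolated, by producing, for every set $C$ with $q \in C^* \cap L$ (respectively $q \in C^* \cap p\cdot E(R)$), another point of that same set distinct from $q$. The natural tool is the basis description from Corollary~\ref{cor:thickbasis}: a neighborhood of $q$ in $L$ is determined by a syndetic set $B$ with $q \in B^*$, and a neighborhood of $q$ in $p \cdot E(R)$ is determined by a thick set $A$ with $q \in A^*$. So it suffices to show that $B^* \cap L$ and $A^* \cap (p \cdot E(R))$ each contain more than one point whenever they are nonempty.

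First I would handle $L$. Given a syndetic $B$ with $B^* \cap L \neq \emptyset$, the point of $L$ we already have lies in some minimal right ideal $R'$; the idea is to move it off itself using the finiteness bound on $\Fix(a)$ (equivalently, via Lemma~\ref{lem:muove}, a uniform finite bound $k$ on $|\{x \in S : x \cdot q = q\}|$ for $q \in S^*$). Concretely, I would use that $L = \beta S \cdot q = S^* \cdot q$ by Lemma~\ref{lem:same}, so the map $\rho_q : S^* \to L$ is a continuous surjection; its fibers are small (the fiber over $q$ itself has $\le k$ points in $S$ and one checks the relevant fiber in $S^*$ is controlled), so $L$ cannot be a singleton and, more to the point, $B^* \cap L$ cannot be a singleton, since $B^*$ is itself a large clopen set and $\rho_q$ restricted to an appropriate large piece of $B^*$ cannot collapse everything to one point. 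I would make this precise by showing that if $B^* \cap L = \{q\}$ then $\rho_q^{-1}(q) \supseteq B^*$ minus a small set, forcing $|\{x : x \cdot q = q\}|$ to be large, contradicting Lemma~\ref{lem:muove}; here Lemma~\ref{lem:thickminus} (deleting a small set from a thick set keeps it thick) is the mechanism for controlling "small" inside the relevant clopen sets.

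For $p \cdot E(R)$, the approach is parallel but uses thickness instead of syndeticity. Given a thick $A$ with $A^* \cap (p \cdot E(R)) \neq \emptyset$, pick $q$ in that intersection; by Lemma~\ref{lem:qER}, $q \cdot E(R) = p \cdot E(R)$, so without loss of generality $q = p$. By Lemma~\ref{lem:leftideals}, the thick set $A$ contains $\kappa$ pairwise disjoint thick subsets $A_\eta$; each $\overline{A_\eta}$ contains a minimal left ideal $L_\eta$, and by Theorem~\ref{thm:transversals}(1) each $L_\eta$ meets $\overline{p \cdot E(R)}$ in exactly one point, namely $p \cdot e_\eta$ for the appropriate idempotent $e_\eta$. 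Since $p \cdot e_\eta \in L_\eta \subseteq \overline{A_\eta} \subseteq A^*$ and the $A_\eta$ are disjoint, at most one of these points can equal $p$; choosing $\eta$ so that $p \cdot e_\eta \neq p$ gives a second point of $A^* \cap (p \cdot E(R))$, provided we also check $p \cdot e_\eta \in p \cdot E(R)$ (not merely its closure) — which holds because $p \cdot e_\eta \in K(\beta S)$ and lies in the closed transversal $\overline{p \cdot E(R)}$, hence in $p \cdot E(R)$ itself by Theorem~\ref{thm:transversals}(2), the partition elements being closed in $K(\beta S)$.

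The main obstacle I anticipate is the first part, the argument for $L$: translating "$B^* \cap L$ is a singleton" into a contradiction with the bound on $|\{x : x\cdot q = q\}|$ requires care about what happens in $S^*$ versus $S$, since Lemma~\ref{lem:muove} only bounds the principal ultrafilters fixing $q$, and one must argue that if $\rho_q$ collapses a large clopen piece of $S^*$ to $q$ then in fact many points of $S$ must fix $q$ too (using that $S$ is dense and $\rho_q$ is continuous, together with very weak cancellativity to keep the relevant sets uniform). The $p \cdot E(R)$ part, by contrast, is essentially a packaging of results already established and should go through cleanly once the disjoint-thick-sets trick is set up.
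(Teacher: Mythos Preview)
Your argument for $p \cdot E(R)$ is correct and is essentially the paper's proof: reduce to a thick $A$ via Corollary~\ref{cor:thickbasis}, use Lemma~\ref{lem:leftideals} to find several minimal left ideals inside $A^*$, and note each meets $p\cdot E(R)$. (Your worry about closure versus $p\cdot E(R)$ is unnecessary: $e_\eta\in E(R)$ by construction, so $p\cdot e_\eta\in p\cdot E(R)$ on the nose.)

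Your plan for $L$, however, has a genuine gap. The assertion ``if $B^*\cap L=\{q\}$ then $\rho_q^{-1}(q)\supseteq B^*$ minus a small set'' is not right and does not follow from anything available: $\rho_q$ sends $r\mapsto r\cdot q\in L$, and there is no reason an ultrafilter $r\in B^*$ should satisfy $r\cdot q=q$, since $r\cdot q$ need not lie in $B^*$ at all. The correct argument is both simpler and avoids the $S$-versus-$S^*$ issue you flag. You do not need the syndetic basis; take any $A$ with $A^*\cap L=\{q\}$. For every $x\in S$ one has $x\cdot q\in L$, so $x\cdot q\in A^*$ forces $x\cdot q=q$; hence
\[
\{x\in S: x^{-1}A\in q\}=\{x\in S: x\cdot q\in A^*\}\subseteq\{x\in S: x\cdot q=q\},
\]
and the right-hand side is finite by Lemma~\ref{lem:muove}. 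On the other hand, by Lemma~\ref{lem:same} there is $r\in S^*$ with $r\cdot q=q$, and then $A\in q=r\cdot q$ gives $\{x\in S:x^{-1}A\in q\}\in r$, so this set is infinite --- a contradiction. This is exactly the paper's proof; the point is that the relevant preimage lives in $S$, not in $S^*$, and the contradiction comes from a single element of $r$, not from any inclusion of clopen sets.
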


\begin{proof}
Suppose that $q$ is an isolated point of $L$. Pick $A\in p$
such that $A^*\cap L=\{p\}$. Now $L$ is a minimal left
ideal of $\beta S$ and
$q\in L=S^*\cdot q$ by Lemma~\ref{lem:same}, so pick $r\in S^*$ such that
$q=r\cdot q$. Then $\{x\in S:x^{-1}A\in q\}\in r$.
Let $F=\{x\in S:x\cdot q=q\}$. Then $F$ is finite
by Lemma \ref{lem:muove}, so pick $x\in S\setminus F$ such that
$x^{-1}A\in q$. Then $A\in x\cdot q$ so $x\cdot q\in A^*\cap L$
and $x\cdot q\neq q$, a contradiction.

Suppose $q \in p \cdot E(R)$, and let $U$ be a neighborhood of $q$ in 
$p \cdot E(R)$. By Corollary~\ref{cor:thickbasis}, there is some thick set $A$ such that 
$A^* \cap (p \cdot E(R)) \sub U$. By combining Lemma~\ref{lem:leftideals} with Lemma~\ref{lem:BHM}, $A^*$ contains more than one minimal left ideal. Each minimal left ideal contains a point of $p \cdot E(R)$
by Theorem \ref{thm:transversals}, so this shows that $A^*$, hence $U$, contains more than one point of $p \cdot E(R)$.
\end{proof}

Let us note that weak 
cancellativity alone is not enough to prove this lemma. The semigroup $(\N,\vee)$ is weakly 
cancellative, but $q \vee p = p$ for every $p,q \in \N^*$. This means that $\{p\}$ is a minimal left ideal for every $p \in \N^*$.

\begin{theorem}\label{thm:butterfly}
Suppose $S$ is  very weakly cancellative and has a uniform, finite bound on $|\Fix(a)|$ for $a\in S$. 
Then every minimal ultrafilter on $S$ is a butterfly point of $S^*$.
\end{theorem}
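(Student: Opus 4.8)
The plan is to exhibit two subsets $A, B \subseteq S^* \setminus \{p\}$ whose closures meet only at $p$, using the factorization $p = \F \cup \G$ from Theorem~\ref{thm:factor} together with the non-isolation result of Lemma~\ref{lem:butterfly}. Let $L$ and $R$ be the minimal left and right ideals of $\beta S$ containing $p$. The natural candidates are built from a minimal left ideal $L$ and the transversal $p \cdot E(R)$: since $L \cap \closure{p \cdot E(R)} = \{p\}$ by Theorem~\ref{thm:transversals}, the sets $L$ and $p \cdot E(R)$ already have the right intersection behavior, but they are subsets of $S^*$ (here we use that very weak cancellativity forces $K(\b S) \sub S^*$, as remarked after Lemma~\ref{lem:same}), and we must arrange that $p$ is not isolated in either so that we can delete $p$ from both and still have $p$ in both closures.

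First I would set $A = L \setminus \{p\}$ and $B = (p \cdot E(R)) \setminus \{p\}$, both subsets of $S^* \setminus \{p\}$. By Lemma~\ref{lem:butterfly}, neither $L$ nor $p \cdot E(R)$ has an isolated point in the subspace topology inherited from $S^*$; in particular $p$ is not isolated in either, so $p \in \closure{A}$ and $p \in \closure{B}$ (closures taken in $S^*$, equivalently in $\beta S$ since everything in sight lies in $K(\b S) \subseteq S^*$). Next I would show $\closure{A} \cap \closure{B} \subseteq \{p\}$. Since $L$ is closed in $\beta S$, we have $\closure{A} \subseteq \closure{L} = L$. Likewise $\closure{B} \subseteq \closure{p \cdot E(R)}$. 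Hence $\closure{A} \cap \closure{B} \subseteq L \cap \closure{p \cdot E(R)} = \{p\}$ by the first assertion of Theorem~\ref{thm:transversals}. Combined with $p \in \closure{A} \cap \closure{B}$, this gives $\closure{A} \cap \closure{B} = \{p\}$, so $p$ is a butterfly point of $S^*$.

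The only subtlety — and the step I expect to require the most care — is making sure that $p$ genuinely fails to be isolated \emph{in $S^*$} rather than merely in $L$ or in $p\cdot E(R)$, so that deleting $p$ does not remove it from the closures; but this is exactly the content of Lemma~\ref{lem:butterfly}, whose hypotheses (very weak cancellativity plus a uniform finite bound on $|\Fix(a)|$) coincide with those of the present theorem. One should also double-check the degenerate possibility that $L = \{p\}$ or $p \cdot E(R) = \{p\}$ is ruled out — again handled by Lemma~\ref{lem:butterfly}, since a one-point space's single point is isolated. With those checks in place the argument is essentially a two-line deduction from Theorem~\ref{thm:transversals} and Lemma~\ref{lem:butterfly}, so I would keep the write-up short.
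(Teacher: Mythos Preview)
Your proposal is correct and follows essentially the same approach as the paper: the paper's proof is precisely the two-line deduction you describe, invoking Theorem~\ref{thm:transversals} for $L \cap \closure{p\cdot E(R)} = \{p\}$ and Lemma~\ref{lem:butterfly} for non-isolation. Your write-up is in fact slightly more explicit than the paper's (you spell out the sets $A = L\setminus\{p\}$ and $B = (p\cdot E(R))\setminus\{p\}$ and verify the closure containments), but the argument is the same.
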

\begin{proof}
Let $L$ and $R$ be the minimal left and right ideals containing $p$.
Theorem~\ref{thm:transversals} asserts that $\{p\} = L \cap \closure{p \cdot E(R)}$. 
Neither $L$ nor $\closure{p \cdot E(R)}$ has any isolated points by Lemma~\ref{lem:butterfly}, so this makes $p$ a butterfly point.
\end{proof}

We have included the proof of Theorem~\ref{thm:butterfly} because of its 
naturalness and simplicity. But we prove next a stronger result that 
supersedes Theorem~\ref{thm:butterfly} by showing, under the same assumptions, that every minimal ultrafilter is a non-normality point of $S^*$.

\begin{lemma}\label{lem:Gdelta}
Let $S$ be a very weakly cancellative semigroup with
$|S|=\kappa$, and let ${\U}$ be
a collection of open subsets of $S^*$ with $|{\U}|\leq\kappa$. If $\bigcap{\U}$ contains a minimal left ideal of $\b S$, then
$\bigcap{\U}$ contains $2^{2^\kappa}$ distinct
minimal left ideals of $\b S$.
\end{lemma}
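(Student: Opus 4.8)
The plan is to strip the open sets down to basic clopen sets $A^*$, assemble a filter $\F$ of thick sets with a base of size at most $\kappa$ such that $\widehat\F\cap S^*\subseteq\bigcap\U$, and then split $\F$ into $2^{2^\kappa}$ mutually incompatible ``thick extensions'' by taking ultrafilter limits over $\kappa$, using that $|\beta\kappa|=2^{2^\kappa}$.

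First I would fix a minimal left ideal $L\subseteq\bigcap\U$ and, for each $U\in\U$, use the compactness of $L$ and the fact that $\set{A^*}{A\subseteq S}$ is a basis for $S^*$ to find $A_U\subseteq S$ with $L\subseteq A_U^*\subseteq U$. Since $L\subseteq\closure{A_U}$ is a minimal left ideal, Lemma~\ref{lem:BHM} makes $A_U$ thick; and since $A^*\cap B^*=(A\cap B)^*$ in $\b S$, every finite intersection of the $A_U$'s also has $L$ in its closure, hence is thick. So the filter $\F$ generated by $\set{A_U}{U\in\U}$ is contained in $\Theta$ and has a base $\set{D_\xi}{\xi<\kappa}$, closed under finite intersections, consisting of thick sets; moreover $\widehat\F\cap S^*\subseteq\bigcap_{U\in\U}A_U^*\subseteq\bigcap\U$. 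Because $K(\b S)\subseteq S^*$ when $S$ is very weakly cancellative (an immediate corollary of Lemma~\ref{lem:same}), it now suffices to find $2^{2^\kappa}$ minimal left ideals inside $\widehat\F$.

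The combinatorial heart is a version of Lemma~\ref{lem:leftideals} relative to $\F$: I claim there is a family $\seq{C_\iota}{\iota<\kappa}$ of pairwise disjoint subsets of $S$ with $C_\iota\cap D_\xi$ thick for all $\iota,\xi<\kappa$ --- equivalently, such that each $\F\cup\{C_\iota\}$ generates a filter contained in $\Theta$. This is proved by a length-$\kappa$ recursion colouring $S$ with $\kappa$ colours: enumerate $\kappa\times\kappa\times\pf(S)$ in order type $\kappa$, and at the stage handling $(\iota,\xi,F)$ invoke Lemma~\ref{lem:thickminus} (deleting fewer than $\kappa$ points leaves a thick set thick) to pick $y\in S$ with $Fy\subseteq D_\xi$ avoiding all previously coloured points, then colour $Fy$ with colour $\iota$; colour any leftover points of $S$ with colour $0$. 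Each colour class $C_\iota$ then meets every $D_\xi$ in a set containing a translate of every member of $\pf(S)$, so $C_\iota\cap D_\xi$ is thick, and distinct colour classes are disjoint. Given these, for each ultrafilter $\W$ on $\kappa$ let $\F_\W$ be the filter generated by $\F\cup\set{\bigcup_{\iota\in I}C_\iota}{I\in\W}$; since the $C_\iota$ are pairwise disjoint and $\W$ is a filter, every member of $\F_\W$ contains a set of the form $D_\xi\cap\bigcup_{\iota\in I}C_\iota\supseteq D_\xi\cap C_{\iota_0}$ with $I\in\W$ and $\iota_0\in I$, which is thick, so $\F_\W\subseteq\Theta$ and $\widehat{\F_\W}$ contains a minimal left ideal $L_\W$ by Lemma~\ref{lem:BHMagain}; clearly $L_\W\subseteq\widehat\F\subseteq\bigcap\U$. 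If $\W\neq\W'$, pick $I\in\W$ with $\kappa\setminus I\in\W'$; then $L_\W\subseteq\closure{\bigcup_{\iota\in I}C_\iota}$ and $L_{\W'}\subseteq\closure{\bigcup_{\iota\notin I}C_\iota}$, and these closures are disjoint because $\closure X\cap\closure Y=\closure{X\cap Y}$ for $X,Y\subseteq S$. Hence $\set{L_\W}{\W\in\beta\kappa}$ consists of $|\beta\kappa|=2^{2^\kappa}$ distinct minimal left ideals lying inside $\bigcap\U$.

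I expect the main obstacle to be the relative form of Lemma~\ref{lem:leftideals} in the third paragraph: one must keep all $\kappa$ of the sets $D_\xi$ thick throughout a recursion of length $\kappa$ while deleting fewer than $\kappa$ points at each stage, which is exactly what Lemma~\ref{lem:thickminus} supplies, but organizing the bookkeeping (and verifying that it runs uniformly, including when $\kappa=\omega$) requires some care. Everything after that is routine ultrafilter-limit manipulation; the one identity to watch is the distributive law $\bigcap_{j=1}^m\bigcup_{\iota\in I_j}C_\iota=\bigcup_{\iota\in\bigcap_j I_j}C_\iota$, which relies on the pairwise disjointness of the $C_\iota$.
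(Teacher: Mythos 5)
Your argument is correct, and it takes a genuinely different route from the paper's. The paper works inside $S$ itself: it builds, by a transfinite recursion using very weak cancellativity (left solution sets), a set $T=\{t_\sigma:\sigma<\kappa\}$ with $t_\sigma\in E_\sigma=\bigcap_{s\in F}s^{-1}B$ and with the products $s_\eta\cdot t_\sigma$ pairwise distinct in a suitable sense; distinct uniform ultrafilters $p,q$ on $T$ then generate disjoint left ideals $\b S\cdot p$ and $\b S\cdot q$, each shown directly to lie in $\bigcap\U$, and the count $2^{2^\kappa}$ comes from \cite[Theorem 3.62]{H&S} applied to a $\kappa$-uniform family $\{T_\sigma\}$. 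You instead relativize Lemma~\ref{lem:leftideals} to the filter $\F$ generated by the $A_U$: a disjoint family $\seq{C_\iota}{\iota<\kappa}$ with every $C_\iota\cap D_\xi$ thick, after which ultrafilters on the \emph{index set} $\kappa$ produce $2^{2^\kappa}$ pairwise incompatible thick filters extending $\F$, each capturing a minimal left ideal by Lemma~\ref{lem:BHMagain}, with distinctness coming from $\closure{X}\cap\closure{Y}=\closure{X\cap Y}$ for disjoint $X,Y\sub S$. The cancellation hypothesis enters your proof only through Lemma~\ref{lem:thickminus}, so you avoid the delicate injectivity bookkeeping of the paper's recursion entirely; the "main obstacle" you flag is handled by exactly the same device as the paper's proof of Lemma~\ref{lem:leftideals} (enumerate $\kappa\times\kappa\times\pf(S)$ in type $\kappa$, note that fewer than $\kappa$ points have been used at each stage, and apply Lemma~\ref{lem:thickminus}), so there is no gap there. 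What the paper's approach buys in exchange is an explicit algebraic description of the left ideals obtained (they are the $\b S\cdot p$ for $p$ ranging over uniform ultrafilters on $T$), whereas yours obtains them more abstractly from Lemma~\ref{lem:BHMagain}; both yield pairwise disjoint minimal left ideals inside $\bigcap\U$, which is all that is needed.
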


\begin{proof} 
Let $L$ be a minimal left ideal of $\b S$ with $L \subseteq \bigcap{\mathcal U}$.
We claim that for each $U \in {\mathcal U}$, there exists $B_{U} \subseteq S$
such that $L \subseteq B_U^* \subseteq U$. Let $U\in {\mathcal U}$.
For each $p \in L$ pick $C_p \in p$ such that $C_p^* \subseteq U$.
Using the compactness of $L$, pick a finite $F \subseteq L$ such that $L \subseteq \bigcup_{p\in F}C_p^*$,
and let $B_U = \bigcup_{p \in F} C_p$. Then $B_U^* = \bigcup_{p \in F} C_p^* \subseteq U$, as claimed.
Let 
$${\mathcal B}=\big\{\bigcap{\mathcal F}:{\mathcal F}\in\pf(\{B_U:U\in{\mathcal U}\})\big\}.$$
Observe that ${\mathcal B}$ is a set of at most $\kappa$ subsets of $S$,
$L\subseteq\bigcap_{B\in{\mathcal B}}B^*\subseteq \bigcap{\mathcal U}$, and $\B$ is closed
under finite intersections.

Enumerate $S$ as $\seq{ s_\sigma }{\sigma<\kappa}$ and enumerate
${\mathcal B}\times \pf(S)$ as $\seq{ D_\sigma }{\sigma<\kappa}$. For
$\sigma<\kappa$, let $E_\sigma=\bigcap_{s\in F}s^{-1}B$, where
$(B,F)=D_\sigma$. 

We claim that $|E_\sigma|=\kappa$ for
each $\sigma<\kappa$. To see this, let $p\in L$, let
$\sigma<\kappa$, and let $(B,F)=D_\sigma$. For
each $s\in F$, $s \cdot p\in L\subseteq\overline{B}$, which implies that $s^{-1}B\in p$, which implies that $E_\sigma\in p$. 
From this and [HS, Lemma 6.34.3], it follows that $|E_\sigma|=\kappa$.

We now construct a sequence of elements of $S$ by transfinite recursion.
To begin, pick $t_0\in E_0$.  Given $0<\mu<\kappa$, assume we have chosen
$\seq{t_\sigma}{\sigma<\mu}$ already such that
\begin{enumerate}
\item{} if $\sigma<\mu$, then $t_\sigma\in E_\sigma$,
\item{} if $\sigma<\delta<\mu$, then $t_\sigma\neq t_{\delta}$, and
\item{} if $\sigma<\mu$, $\eta<\sigma$, $\nu<\sigma$, and $\tau<\sigma$, then $s_\eta \cdot t_\nu\neq s_\tau \cdot t_\sigma$.
\end{enumerate}

\noindent Given $\eta<\mu$, $\nu<\mu$, and $\tau<\mu$, let
$A_{\eta,\nu,\tau}=\{t\in S:s_\eta \cdot t_\nu=s_\tau \cdot t\}$.
Then each $A_{\eta,\nu,\tau}$ is a left solution set, so
$|\bigcup_{\eta<\mu}\bigcup_{\nu<\mu}\bigcup_{\tau<\mu}A_{\eta,\nu,\tau}|<\kappa$.
Pick $$\textstyle t_\mu\in E_\mu\setminus(\{t_\sigma:\sigma<\mu\}\cup
\bigcup_{\eta<\mu}\bigcup_{\nu<\mu}\bigcup_{\tau<\mu}A_{\eta,\nu,\tau})\,.$$
The three hypotheses are again satisfied at the next stage of the recursion, and this completes the construction of our sequence $\seq{t_\s}{\s < \k}$.

\begin{claim}
If $p\hbox{ and }q$ are distinct uniform ultrafilters on 
$T=\{t_\sigma:\sigma<\kappa\}$, then $\beta S \cdot p\cap\beta S \cdot q=\emp$. 
\end{claim}
\begin{proof}[Proof of claim]
Assume $P$ and $Q$ are disjoint subsets of $T$, with
$P\in p$ and $Q\in q$.  Then we claim that
$$\textstyle \beta S \cdot p \subseteq \closure{\{s_\eta \cdot t_\sigma:t_\sigma\in P\hbox{ and }\eta<\sigma\}}.$$
To see this, it suffices to show that
$S \cdot p \subseteq \closure{\{s_\eta \cdot t_\sigma:t_\sigma\in P\hbox{ and }\eta<\sigma\}}$. 
Let $s_\nu\in S$. 
As $p$ is uniform, $\{t_\sigma:t_\sigma\in P\hbox{ and }\nu<\sigma\}\in p$,
so that $s_\nu \cdot \{t_\sigma:t_\sigma\in P\hbox{ and }\nu<\sigma\}\in s_\nu \cdot p$
and $s_\nu \cdot \{t_\sigma:t_\sigma\in P\hbox{ and }\nu<\sigma\} \subseteq
\{s_\eta \cdot t_\sigma:t_\sigma\in P\hbox{ and }\eta<\sigma\}$. Similarly,
$$\beta S \cdot q \subseteq \closure{\{s_\nu \cdot t_\delta:t_\delta\in Q\hbox{ and }\nu<\delta\}}.$$
Because $\{s_\eta \cdot t_\sigma:t_\sigma\in P\hbox{ and }\eta<\sigma\} \cap
\{s_\nu \cdot t_\delta:t_\delta\in Q\hbox{ and }\nu<\delta\} = \emp$ by construction, we
have that $\beta S \cdot p \cap \beta S \cdot q = \emp$, as desired.
\end{proof}

Consider the relation on $\set{D_\s}{\s < \k}$ defined by
$$D_\sigma\prec D_\tau \quad \text{ if and only if } \quad \pi_1(D_\tau)\subseteq\pi_1(D_\sigma) \ \text{ and } \ \pi_2(D_\sigma)\subseteq \pi_2(D_\tau)$$
where, as usual, $\pi_1(B,F) = B$ and $\pi_2(B,F) = F$. 
Observe that, by our choice of $\B$ and the definition of the $D_\s$, any finitely many members of $\set{D_\s}{\s < \k}$ have a common upper bound with respect to $\prec$. In other words, $\set{D_\s}{\s < \k}$ is directed by $\prec$.

For each $\sigma<\kappa$, let
$T_{\sigma}=\{t_\tau:D_\sigma\prec D_\tau\}$. We claim that
$\{T_\sigma:\sigma<\kappa\}$ has the $\kappa$-uniform finite intersection
property. (This means that the intersection of finitely many of the $T_\sigma$ has size $\kappa$.) To see this, first observe that each $T_\s$ has size $\k$, because if $T_\s = (B,F)$ then for any $s\in S\setminus F$,
$D_\s \prec (B,F\cup\{s\})$. Then, if $H\in\pf(\kappa)$, pick $\tau$ such that $D_\sigma \prec D_\tau$ for each $\sigma\in H$, and observe that $|\bigcap_{\sigma\in H}T_\sigma| \geq |T_\t| = \kappa$.

By [HS, Theorem 3.62], there are $2^{2^\kappa}$ distinct uniform ultrafilters on $S$
containing $\{T_\sigma:\sigma<\kappa\}$. 
For each such ultrafilter $p$, let $L_p$ denote a minimal left ideal contained in $\b S \cdot p$. (One must exist, because $\b S \cdot p$ is a left ideal.) If $p \neq q$, then $L_p \neq L_q$, because $\b S \cdot p$ and $\b S \cdot q$ are disjoint by the claim above. To complete the proof of the theorem, we will show that each such $L_p$ is contained in $\bigcap \U$.

Let $p$ be a uniform ultrafilter on $S$
containing $\{T_\sigma:\sigma<\kappa\}$.
If $B \in \B$ and $s \in S$, then pick $\sigma<\kappa$ such that $D_\sigma=(B,\{s\})$, and observe that $T_\sigma\in p$. 
If $\tau < \kappa$
and $D_\sigma \prec D_\tau = (C,F)$, then
$t_\tau \in E_\tau = \bigcap_{r\in F}r^{-1}C \subseteq s^{-1}B$, so that $s \cdot p \in \closure{B}$.
Because $s$ and $B$ were arbitrary, this shows that $S \cdot p \sub \closure{B}$ for all $B \in \B$. By the continuity of $\rho_p$, this implies $\b S \cdot p \sub \closure{B}$ for all $B \in \B$. Thus
$$\textstyle L_p \sub \b S \cdot p \sub \bigcap \set{\closure{B}}{B \in \B} \sub \bigcap \U,$$
completing the proof of the lemma.
\end{proof}

\begin{theorem}\label{thm:non-normal}
Let $S$ be a very weakly cancellative semigroup with $|S|=\kappa$, and
assume there is a uniform, finite bound on $|\Fix(a)|$ for $a\in S$.
Then for every minimal ultrafilter $p$ on $S$, $S^*\setminus\{p\}$ is not normal.
\end{theorem}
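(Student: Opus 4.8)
The plan is to use the classical criterion for non-normality via a pair of disjoint closed sets that cannot be separated by disjoint open sets, exploiting the local basis for $p$ provided by Theorem~\ref{thm:basis}. Recall that $\{A^* \cap B^* : L \sub A^* \text{ and } p \cdot E(R) \sub B^*\}$ is a local basis for $p$ in $S^*$. The two closed sets to separate will live inside $L \setminus \{p\}$ and inside $\closure{p \cdot E(R)} \setminus \{p\}$; since $L \cap \closure{p\cdot E(R)} = \{p\}$ by Theorem~\ref{thm:transversals}, removing $p$ makes these two pieces disjoint. Any open set $U$ in $S^* \setminus \{p\}$ containing the piece from $\closure{p \cdot E(R)}$ must, by the local-basis description, have closure that ``wraps around'' $p$ and hence crashes into $L$; this is where the $G_\delta$-type fattening argument of Lemma~\ref{lem:Gdelta} enters.

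More precisely, I would proceed as follows. First, inside the minimal left ideal $L$, which has no isolated points by Lemma~\ref{lem:butterfly} and is a closed copy of a nice space, one picks a countable (or size-$\le\kappa$) discrete-in-itself set $D \sub L \setminus \{p\}$ accumulating at $p$, and splits its closure (minus $p$) into two relatively clopen pieces — or more simply, one works with $\{p\}$ on one side. Actually the cleanest route: let $F_0 = \closure{p \cdot E(R)} \setminus (\text{a small neighborhood of } p)$ and $F_1 = L \setminus (\text{a small neighborhood of } p)$; these are disjoint closed subsets of $X := S^* \setminus \{p\}$. Suppose $U_0 \supseteq F_0$ and $U_1 \supseteq F_1$ are open in $X$ and disjoint. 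Enlarge: consider $V_i = U_i \cup \{p\}$'s interior behavior near $p$. The key point is that $U_1$, being an open set containing all of $L$ except a neighborhood of $p$, together with the local basis forces: for cofinally many basic neighborhoods $A^* \cap B^*$ of $p$, the set $A^* \cap B^* \cap U_1$ is large. Then one builds, exactly as in Lemma~\ref{lem:Gdelta}, a family $\U$ of $\le \kappa$ open sets — roughly the sets $A^*$ ranging over syndetic $A$ forced by $U_0$ — whose intersection contains $L$, hence by Lemma~\ref{lem:Gdelta} contains $2^{2^\kappa}$ minimal left ideals; but $\closure{U_0} \sup \closure{p \cdot E(R)}$ can meet each minimal left ideal in only the one transversal point, and a counting/separation contradiction with $\closure{U_1}$ ensues.

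The cleanest packaging is probably: assume toward a contradiction that $X = S^*\setminus\{p\}$ is normal. Fix disjoint closed $C_L \sub L \setminus \{p\}$ and $C_E \sub \closure{p \cdot E(R)} \setminus \{p\}$ with $p \in \closure{C_L} \cap \closure{C_E}$ (these exist because neither $L$ nor $\closure{p \cdot E(R)}$ has an isolated point at $p$, so $p$ is a limit of each). By normality take disjoint open $U_L \supseteq C_L$, $U_E \supseteq C_E$. For each syndetic $A$ with $L \sub A^*$, the trace $A^* \cap X$ is open and contains $C_L$; using normality repeatedly one extracts a $\kappa$-sized subfamily $\U$ of open sets refining $U_L$ near $p$ whose intersection still contains a minimal left ideal (namely $L$). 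Lemma~\ref{lem:Gdelta} then gives $2^{2^\kappa}$ minimal left ideals inside $\bigcap \U \sub \closure{U_L}$. On the other hand $\closure{U_E} \supseteq \closure{p \cdot E(R)}$ meets every minimal left ideal (Theorem~\ref{thm:transversals}(1)), and since $U_L \cap U_E = \emptyset$ we get $\closure{U_L}$ and $\closure{U_E}$ intersecting only in $X \setminus (U_L \cup U_E)$; pushing the estimate, some minimal left ideal $L'$ sits inside $\closure{U_L}$ yet its unique transversal point $p \cdot e \in \closure{p \cdot E(R)} \sub \closure{U_E}$, and one arranges that this point is an interior-ish point forcing $U_L \cap U_E \ne \emptyset$ — contradiction.

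\textbf{The main obstacle} I expect is the bookkeeping in the middle step: carefully choosing the closed sets $C_L, C_E$ and then converting ``$U_L$ is open containing $C_L$ and clustering at $p$'' into an honest family $\U$ of $\le\kappa$ open supersets of a minimal left ideal that is fine enough for Lemma~\ref{lem:Gdelta} to bite, while simultaneously keeping $\closure{U_E}$ under control so that the $2^{2^\kappa}$ new minimal left ideals produced cannot all avoid $U_E$. This is precisely the delicate interface between the topological separation hypothesis and the algebraic rigidity (each transversal hits each minimal left ideal exactly once), and getting the cardinal arithmetic and the ``exactly one point'' constraint to collide requires threading the local basis of Theorem~\ref{thm:basis} through both $U_L$ and $U_E$ at once. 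Everything else — that neither $L$ nor $\closure{p \cdot E(R)}$ is isolated at $p$, that $L \cap \closure{p \cdot E(R)} = \{p\}$, that syndetic sets trace onto every minimal left ideal — is already in hand from the earlier results.
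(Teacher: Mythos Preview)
Your overall strategy matches the paper's: show that $L \setminus \{p\}$ and $C \setminus \{p\}$ (where $C = \closure{p \cdot E(R)}$) cannot be separated by open sets $U$, $V$, by using Lemma~\ref{lem:Gdelta} to produce a second minimal left ideal $L'$ and then exploiting that $C$ meets $L'$ in a single point. But the step you flag as ``bookkeeping'' is in fact the crux, and your proposal does not supply the idea needed to carry it out. The obstacle is that $U$ covers only $L \setminus \{p\}$, not $L$, so Lemma~\ref{lem:Gdelta} cannot be applied to $U$ alone; you need a family of at most $\kappa$ open sets each containing \emph{all} of $L$, and this family must still carry enough information about $U$ and $V$ to force a contradiction. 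Your suggestions (``use normality repeatedly'', or range over ``syndetic $A$ with $L \subseteq A^*$'' --- such $A$ are thick, incidentally, not syndetic) do not produce such a family, and there is no evident way to patch this without a new idea.

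The paper's missing ingredient is algebraic: for each $s$ in the cofinite set $D = \{s \in S : s \cdot p \neq p\}$, set $W_s = S^* \setminus s \cdot C$. Since $C \cdot e = \{p\}$ (where $e$ is the identity of $L \cap R$), one checks $p \in W_s$, hence $L \subseteq U \cup W_s$ for every $s \in D$. Now Lemma~\ref{lem:Gdelta} applied to $\{U \cup W_s : s \in D\}$ yields a minimal left ideal $L' \neq L$ inside $\bigcap_{s \in D}(U \cup W_s)$. The transversal point $p \cdot f \in L' \cap C$ (with $f$ the identity of $L' \cap R$) lies in $V$, but it is a limit of the points $s \cdot p \cdot f \in s \cdot C = S^* \setminus W_s$; so arbitrarily close to $p \cdot f$ in $L'$ there are points outside $\bigcap_s W_s$, and since $L' \subseteq U \cup W_s$ for each $s$, such points must lie in $U$. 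Taking one inside the neighborhood $V$ of $p \cdot f$ gives a point of $U \cap V$ other than $p$, the desired contradiction. The use of the translates $s \cdot C$ is precisely what your outline lacks.
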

\begin{proof}
Let $L$ and $R$ be the minimal left and right ideals respectively
with $p\in L\cap R$. Let $e$ be the identity of $L\cap R$. Let 
$C=\closure{p\cdot E(R)}$. We claim that $L\setminus \{p\}$ and
$C\setminus\{p\}$ cannot be separated by open sets in $S^*\setminus\{p\}$.  Suppose
instead that we have open subsets $U$ and $V$ of $S^*$ such that
$L\setminus\{p\}\subseteq U$, $C\setminus\{p\}\subseteq V$, and $U\cap V\subseteq \{p\}$. 
Let $D=\{s\in S : s \cdot p \neq p\}$ and observe that, by Lemma 3.2, $S\setminus D$ is finite.

For each $s \in D$, let $W_s = S^*\setminus s \cdot C$. Now fix $s \in D$. Because $\lambda_s$ is continuous, 
$W_s$ is open in $S^*$. We claim also that $p \in W_s$.
We know that $p \cdot E(R) \cdot e = \{p \cdot e\} = \{p\}$. Because
$\rho_e\big(p \cdot E(R)\big) = \{p\}$, and $\rho_e$ is continuous on all of $\b S$, we have
$$C \cdot e = \rho_e\big(\closure{p\cdot E(R)}\big) = \closure{\rho_e(p\cdot E(R))} = \{p\}.$$
If $p \in s \cdot C$, then
$p = p \cdot e \in s \cdot C \cdot e = \{s \cdot p\}$, so $p = s \cdot p$. This contradicts the assumption that $s \in D$, so we may conclude that
$W_s$ is a neighborhood of $p$. Hence $L \subseteq U \cup W_s$. Furthermore, because $s$ was an arbitrary element of $D$, $L\subseteq \bigcap_{s \in D}(U \cup W_s)$. 

By
Lemma~\ref{lem:Gdelta}, there is a minimal left ideal $L'$ of $\beta S$ such that
$L' \neq L$ and $L' \subseteq \bigcap_{s\in D}(U \cup W_s)$. Let 
$f$ be the identity of $L'\cap R$.  
Now $p\in L=S^*\cdot p$ so
$p=q\cdot p$ for some $q\in S^*$. Since $q\in S^*$, $D\in q$ and so $p\in\closure{D\cdot p}$.
Therefore $p \cdot f = \rho_f(p) \in \rho_f\big(\closure{D \cdot p}\big) = \closure{D \cdot p \cdot f}$.

We claim $L' \cap \bigcap_{s\in D}W_s \cap (D \cdot p \cdot f) = \emp$. Suppose
instead that $s\in D$ and $s \cdot p \cdot f \in L' \cap \bigcap_{t \in D}W_t$.
Then, in particular, $s \cdot p \cdot f\in W_s$. But $p \cdot f \in C$, so $s \cdot p \cdot f \in s \cdot C = S^* \setminus W_s$, a contradiction.

As $p \cdot f \in \closure{D \cdot p \cdot f}$ and $L' \cap \bigcap_{s \in D}W_s \cap (D \cdot p \cdot f) = \emp$,
we have that $L' \cap \bigcap_{s \in D}W_s$ is not a neighborhood of $p \cdot f$ in $L'$.

Now $p \cdot f \in C \setminus \{p\} \subseteq V$ and $V$ is open in $S^*$, 
so $L' \cap (L' \cup V)$ is a neighborhood of $p \cdot f$ in $L'$. Therefore $L' \cap (L' \cup V)$ cannot be contained in $L' \cap \bigcap_{s \in D}W_s$.
Pick $q \in L'\cap (L'\cup V) \setminus (L' \cap \bigcap_{s \in D}W_s)$, and pick $s \in D$ such that $q \notin W_s$.
Because $L' \subseteq U \cup \bigcap_{t \in D}W_t$, we must
have $q \in U \cap V$ and $q \notin W_s$. But $q \notin W_s$ implies $q \neq p$, so this shows that $U \cap V \not\sub \{p\}$, as desired.
\end{proof}

\begin{corollary}\label{cor:plusnotnormal} Let $p\in K(\beta\ben,+)$.
Then $\ben^*\setminus\{p\}$ is not normal.\end{corollary}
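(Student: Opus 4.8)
The plan is simply to specialize Theorem~\ref{thm:non-normal} to the semigroup $S=(\ben,+)$, so the first step is to check that $(\ben,+)$ meets the two hypotheses of that theorem. Addition on $\ben$ is cancellative, hence every left solution set $\{x\in\ben:a+x=b\}$ and every right solution set $\{x\in\ben:x+a=b\}$ has at most one element; in particular all solution sets are finite, so $(\ben,+)$ is weakly cancellative, and since $\card{\ben}=\w$ this coincides with very weak cancellativity. Second, for each $a\in\ben$ the set $\Fix(a)=\{x\in\ben:x+a=a\}$ has at most one element (it is empty if $0\notin\ben$ and equals $\{0\}$ otherwise), again by cancellativity; hence there is a uniform finite bound on $\card{\Fix(a)}$.

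With both hypotheses verified, the next step is to recall that an ultrafilter $p$ on $\ben$ is a minimal ultrafilter of $(\beta\ben,+)$ precisely when $p\in K(\beta\ben,+)$, as noted in the introduction. Theorem~\ref{thm:non-normal} then applies directly and yields that $\ben^*\setminus\{p\}$ is not normal for every such $p$, which is exactly the assertion of the corollary.

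There is no real obstacle here: the corollary is an immediate instance of Theorem~\ref{thm:non-normal}, and the only thing that needs checking is that the concrete semigroup $(\ben,+)$ satisfies the (elementary) cancellativity and $\Fix$\nobreakdash-boundedness requirements, which it does trivially. I would remark that the same verification works verbatim for $(\ben,\cdot)$, so one also obtains that $\ben^*\setminus\{p\}$ is not normal for every $p\in K(\beta\ben,\cdot)$, and that $K(\beta\ben,+)$ and $K(\beta\ben,\cdot)$ are disjoint, as mentioned in the abstract.
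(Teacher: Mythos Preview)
Your proof is correct and follows essentially the same approach as the paper: verify that $(\ben,+)$ is (very weakly) cancellative and that $\Fix(a)$ is uniformly finitely bounded, then apply Theorem~\ref{thm:non-normal}. The paper's proof is even terser, noting simply that $(\ben,+)$ is cancellative and that $\Fix(a)=\emp$ for all $a$ (since $0\notin\ben$ in this paper's convention); your remark about $(\ben,\cdot)$ and the disjointness of the two smallest ideals is exactly what the paper records immediately afterward.
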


\begin{proof} $(\ben,+)$ is cancellative and for
$a\in\ben$, $\{x\in \ben:x+a=a\}=\emp$.\end{proof}

\begin{corollary}\label{cor:timesnotnormal} Let $p\in K(\beta\ben,\cdot )$.
Then $\ben^*\setminus\{p\}$ is not normal.\end{corollary}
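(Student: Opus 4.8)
The plan is to derive this exactly as Corollary~\ref{cor:plusnotnormal} was derived: by checking that the multiplicative semigroup $(\ben,\cdot)$ — where $\ben$ denotes the positive integers, so that $K(\beta\ben,\cdot)$ is a nonempty subset of $\ben^*$ — satisfies the two hypotheses of Theorem~\ref{thm:non-normal}, namely very weak cancellativity and a uniform finite bound on $|\Fix(a)|$.

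First I would note that $(\ben,\cdot)$ is cancellative: for $a,b\in\ben$ the left solution set $\{x\in\ben:ax=b\}$ equals $\{b/a\}$ when $a\mid b$ and is empty otherwise, and similarly for right solution sets, so every solution set is finite and $(\ben,\cdot)$ is weakly cancellative. Since $\ben$ is countable, weak and very weak cancellativity coincide, so $(\ben,\cdot)$ is very weakly cancellative. Next I would check the $\Fix$ condition: for each $a\in\ben$ we have $\Fix(a)=\{x\in\ben:xa=a\}=\{1\}$, since $a\geq 1$ forces $x=1$; hence $|\Fix(a)|=1$ for every $a$, and the constant $1$ is a uniform finite bound. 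Theorem~\ref{thm:non-normal} then applies directly and yields that $\ben^*\setminus\{p\}$ is not normal for every minimal ultrafilter $p$ of $(\beta\ben,\cdot)$, i.e.\ for every $p\in K(\beta\ben,\cdot)$.

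I do not expect any genuine obstacle; the argument is a routine verification of hypotheses, just as in the additive case. The only point requiring a word of care is the convention that $\ben=\{1,2,3,\ldots\}$ in the multiplicative setting — were $0$ included it would be an absorbing element and $K(\beta\ben,\cdot)$ would collapse to $\{0\}\subseteq\ben$, trivializing the statement — but this convention is already implicit in the paper's reference to the disjoint sets $K(\beta\ben,+)$ and $K(\beta\ben,\cdot)$, and it is the standard convention for $(\beta\ben,\cdot)$.
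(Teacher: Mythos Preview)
Your proposal is correct and follows essentially the same approach as the paper: verify that $(\ben,\cdot)$ is cancellative (hence very weakly cancellative) and that $\Fix(a)=\{1\}$ for every $a$, then invoke Theorem~\ref{thm:non-normal}. The paper's proof is just the one-line version of what you wrote.
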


\begin{proof} $(\ben,\cdot )$ is cancellative and for
$a\in\ben$, $\{x\in \ben:xa=a\}=\{1\}$.\end{proof}

$K(\beta\ben,+)\cap K(\beta\ben, \cdot)=\emp$ by \cite[Corollary 13.15]{H&S},
so the results of Corollaries \ref{cor:plusnotnormal} and \ref{cor:timesnotnormal} do not overlap.

Let us note that while Theorems \ref{thm:butterfly} and \ref{thm:non-normal} are stated for the space $S^*$, the conclusions also hold when $S^*$ is replaced with either $\b S$ or $U(S)$. For $\b S$, this can be deduced directly from Theorems \ref{thm:butterfly} and \ref{thm:non-normal} 
(using the fact that if $p$ is a butterfly/non-normality point of a closed subspace of $X$, then it is one in $X$ too). For $U(S)$, it follows from making a few trivial modifications to the proofs presented already.

We turn now to the last two results of this section, which give two curious topological properties of the spaces of the form $p \cdot E(R)$. These results are proved under the extra assumption that $S$ is countable.

Recall that $x$ is a \emph{$P$-point} of a space $X$ if every countable intersection of neighborhoods of $x$ is a neighborhood of $x$. $X$ is a \emph{$P$-space} if all its points are $P$-points or, equivalently, if countable intersections of open sets are open.

\begin{theorem}\label{thm:Pspace}
Suppose $S$ is countable, weakly cancellative, and has a uniform, finite bound on $|\Fix(a)|$ for $a\in S$. 
Let $R$ be a minimal right ideal of $S^*$ and let $r \in R$. Then every $q \in r \cdot E(R)$ is a 
$P$-point of $\closure{r \cdot E(R)}$. In particular, $r \cdot E(R)$ is a $P$-space.
\end{theorem}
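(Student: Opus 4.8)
The plan is to extract the topological content of Theorem~\ref{thm:factor}, reduce the statement ``$q$ is a $P$-point'' to a purely combinatorial claim about thick and piecewise syndetic sets, and then settle that claim by a recursion of length $\omega$ (this is the only place countability is used).

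First I would set up the reduction. By Lemma~\ref{lem:qER}, $r\cdot E(R)=q\cdot E(R)$ for every $q\in r\cdot E(R)$, and since $q\cdot E(R)$ is dense in $C:=\closure{q\cdot E(R)}$ it suffices to show that each such $q$ is a $P$-point of $C$; the ``$P$-space'' assertion follows at once. Here $q$ is minimal (it lies in $R$), so let $L$ and $R$ be the minimal left and right ideals containing $q$, and let $\F=\bigcap L$. From Theorem~\ref{thm:basis}, together with the observation that $C\subseteq B^*$ whenever $q\cdot E(R)\subseteq B^*$ (because $B^*$ is clopen), the family $\set{A^*\cap C}{A\in\F}$ is a local basis for $q$ in $C$; moreover $A^*\cap C$ is relatively open in $C$ and contains $q$ for \emph{every} $A\in q$. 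Hence $q$ is a $P$-point of $C$ if and only if, for every decreasing sequence $\seq{A_n}{n\in\w}$ in $\F$, there is some $A\in\F$ with $A^*\cap C\subseteq A_n^*$ for all $n$, i.e. with $\closure{A\setminus A_n}\cap C=\0$ for all $n$.

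Second I would reduce this to a statement about piecewise syndetic sets. Since $\closure{A\setminus A_n}$ is clopen in $\beta S$, $C=\closure{q\cdot E(R)}$, and $C\cap K(\beta S)=q\cdot E(R)$ by Theorem~\ref{thm:transversals}, it is enough for $\closure{A\setminus A_n}$ to miss $q\cdot E(R)$ (its clopen complement then contains $q\cdot E(R)$, hence $C$); and for this it is enough that $A\setminus A_n$ not be piecewise syndetic, since then $\closure{A\setminus A_n}$ misses all of $K(\beta S)$. So the theorem reduces to: given a decreasing sequence $\seq{A_n}{n\in\w}$ in $\F$, there is $A\in\F$ with $A\setminus A_n$ not piecewise syndetic for every $n$.

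Third, the construction. Enumerate $S=\set{s_k}{k\in\w}$. Because each $A_m\in\F=\set{B\in q}{s^{-1}B\in q\text{ for all }s\in S}$ (Theorem~\ref{thm:factor}), the set $\bigcap_{m\le k}s_k^{-1}A_m$ lies in $q$ for every $k$; one chooses $Q_k\in q$ with $Q_k\subseteq\bigcap_{m\le k}s_k^{-1}A_m$ and sets $A=\bigcup_{k\in\w}s_kQ_k$. Then $s_j^{-1}A\supseteq Q_j\in q$ for every $j$, so $A\in\F$; and $s_kQ_k\subseteq A_m$ whenever $m\le k$, so $A\setminus A_n\subseteq\bigcup_{k<n}s_kQ_k$ is a \emph{finite} union for each $n$. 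The work is in choosing the $Q_k$ so that these finite unions are not piecewise syndetic: each $s_kQ_k$ is piecewise syndetic on its own (a translate by an element of $S$ of the set $Q_k$, and $Q_k$ is piecewise syndetic because it lies in the minimal ultrafilter $q$, while $K(\beta S)$ is an ideal), so in choosing $Q_k$ one must thin it inside $q$ so that $s_kQ_k\setminus A_n=s_k\bigl(Q_k\setminus s_k^{-1}A_n\bigr)$ fails to be piecewise syndetic for every $n>k$ — in effect, pushing the ``error'' contributed at stage $k$ to level $n$ off of $K(\beta S)$ — and one must coordinate all these choices across the $\omega$ stages. This is where weak cancellativity enters (to control the left solution sets $s_k^{-1}\{v\}$, keeping the translates $s_kQ_k$ essentially disjoint and well behaved) together with the uniform finite bound on $|\fix(a)|$, used as in Lemma~\ref{lem:muove} and in the proof that $q\cdot E(R)$ has no isolated points. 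Reconciling ``$A\in\F$'' with the non-piecewise-syndeticity of all the error sets $A\setminus A_n$ simultaneously — that is, making the recursion actually go through — is the step I expect to be the real obstacle; everything preceding it is formal.
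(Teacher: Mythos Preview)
Your reduction in the first two steps is correct and the equivalences you state do hold, but the proof is genuinely incomplete: you explicitly flag the recursion in the third step as ``the real obstacle'' and do not carry it out. That is not a matter of filling in routine details. The condition you aim for --- finding $A\in\F$ with every $A\setminus A_n$ failing to be piecewise syndetic --- is exactly the assertion that $L$ is a $P$-set in the subspace $K(\beta S)$, i.e.\ that every $G_\delta$ containing $L$ contains a relative neighborhood of $L$ in $K(\beta S)$. This is strictly stronger than what the theorem asserts (which, unwinding your own first reduction, only requires $A^*\cap C\subseteq\bigcap_nA_n^*$, a condition on the single transversal $q\cdot E(R)$ rather than on all of $K(\beta S)$), and your recursion gives no mechanism for achieving it: at stage $k$ you would need to thin $Q_k\in q$ so that $s_k(Q_k\setminus s_k^{-1}A_n)$ misses \emph{every} minimal left ideal for every $n>k$, and there are $2^{\mathfrak c}$ such ideals to avoid with only countably many choices. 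Nothing in weak cancellativity or the $\Fix$ bound supplies that.

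The paper's argument is entirely different and avoids this difficulty. Rather than constructing a pseudo-intersection in $\F$, it sets $K_n=C\setminus U_n$ and observes that $A=\bigcup_nK_n$ is $\sigma$-compact with $\closure A\cap L\subseteq\{q\}$ (by Theorem~\ref{thm:transversals}); on the other side, countability of $S$ gives a countable dense $B\subseteq L\setminus\{q\}$ (here Lemma~\ref{lem:muove} and Lemma~\ref{lem:butterfly} are used to ensure $L\setminus\{q\}$ is dense in $L$). Now $A$ and $B$ are $\sigma$-compact with $\closure A\cap B=A\cap\closure B=\emptyset$, so the $F$-space--type separation \cite[Theorem~3.40]{H&S} gives $\closure A\cap\closure B=\emptyset$; since $q\in\closure B$, one gets $q\notin\closure A$, which is precisely the $P$-point conclusion. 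The missing idea in your plan is this appeal to separation of $\sigma$-compact sets in $\beta S$: it replaces the uncontrolled transfinite bookkeeping over all minimal left ideals by a single clean topological fact.
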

\begin{proof}
Let $q \in r \cdot E(R)$, and let $U_1,U_2,U_3,\dots$ be open neighborhoods of $q$ in 
$C = \closure{r \cdot E(R)}$. Let $L$ denote the minimal left ideal of $S^*$ containing $q$.

Setting $K_n = C \setminus U_n$ for every $n \in \N$, we have
$$\textstyle L \cap \closure{\bigcup_{n \in \N}K_n} \ \sub \ L \cap C \, = \, \{q\}$$
by Theorem~\ref{thm:transversals}. Let $\set{d_n}{n \in \N}$ be a countable dense subset of $L$ that does not contain $q$.
(Such a set exists because $L$ is separable, as $L = \closure{S \cdot q}$, $\{x\in S:x\cdot q=q\}$ is finite by 
Lemma \ref{lem:muove}, and $L$ has no isolated points by Lemma~\ref{lem:butterfly}). 
Taking $A = \bigcup_{n \in \N} K_n$ and $B = \set{d_n}{n \in \N}$, $A$ and $B$ are $\s$-compact subsets of $\b\N$ such that $\closure{A} \cap B = \0$ and $\closure{B} \cap A = \0$. By Theorem 3.40 in \cite{H&S}, this implies $\closure{A} \cap \closure{B} = \0$. As $q \in \closure{B}$, we have $q \notin \closure{A}$. Taking complements, this implies $q$ is in the interior of $\bigcap_{n \in \N}U_n$. This shows $q$ is a $P$-point of $C$.
\end{proof}

\begin{corollary}\label{cor:Borel}
Suppose $S$ is countable, weakly cancellative, and has a uniform, finite bound on $|\Fix(a)|$ for $a\in S$. 
Let $R$ be a minimal right ideal of $S^*$. Then $E(R)$ is not Borel in $\b S$, nor is $q \cdot E(R)$ for any $q \in R$.
\end{corollary}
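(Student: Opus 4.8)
The plan is to read off the topological content of Theorem~\ref{thm:Pspace} and Lemma~\ref{lem:butterfly} and then apply a descriptive-set-theoretic fact about $\b\N$. Since $e\cdot E(R)=E(R)$ for every $e\in E(R)$, and since for $q\in R$ the set $q\cdot E(R)$ is already of the form $r\cdot E(R)$ with $r\in R$, it is enough to treat one set $\mathcal X=r\cdot E(R)$, $r\in R$. By Theorem~\ref{thm:Pspace}, $\mathcal X$ is a $P$-space (in fact each point of $\mathcal X$ is a $P$-point of the compact set $C=\closure{\mathcal X}$), and by Lemma~\ref{lem:butterfly} it has no isolated points; being nonempty and $T_1$ it is therefore infinite, indeed uncountable, since a countable crowded $T_1$ $P$-space would be discrete.

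Next I would record the fact driving the contradiction: an infinite compact Hausdorff space is never a $P$-space. Indeed, if $K$ is an infinite closed subset of $\b\N$, choose a countably infinite discrete subspace $D\sub K$; then the closure of $D$ in $\b\N$ lies in $K$ and is a copy of $\b\N$, which is not a $P$-space, while being a $P$-space is hereditary. Since compact subsets of $\b\N$ are closed, no infinite compact subspace can sit inside a $P$-subspace of $\b\N$. Hence, assuming for contradiction that $\mathcal X$ is Borel, the proof reduces to exhibiting an infinite compact subset of $\mathcal X$; equivalently (a nonempty scattered space has an isolated point, contrary to Lemma~\ref{lem:butterfly}), it reduces to the statement that \emph{a non-scattered Borel subset of $\b\N$ contains a copy of $\N^*$}.

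To establish the latter one must work relative to $\N^*$, since $\mathcal X\sub S^*=\N^*$ is nowhere dense, hence meager, in $\b\N$. If the Borel set $\mathcal X$ is \emph{non-meager} in $\N^*$, then, having the Baire property, it is comeager in some nonempty basic clopen $A^*\iso\N^*$; thus $A^*\setminus\mathcal X$ is meager in $A^*$, so $A^*\cap\mathcal X$ contains a dense $G_\delta$ of $A^*$, and --- because $\mathcal P(\N)/\mathrm{fin}$ is countably closed --- every dense $G_\delta$ of $A^*$ contains a nonempty basic clopen set $W\iso\N^*$. This $W\sub\mathcal X$ is the desired infinite compact subspace, and the contradiction follows.

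The step I expect to be the main obstacle is the case where $\mathcal X$ is meager in $\N^*$. Here I would instead argue inside $C=\closure{\mathcal X}$: $\mathcal X$ is dense in $C$, by Theorem~\ref{thm:transversals} it equals $C\cap K(\b S)$ and so meets every minimal left ideal, and by Corollary~\ref{cor:thickbasis}(2) it has a basis of sets $A^*\cap\mathcal X$ with $A$ thick; combining this with the fact that every point of $\mathcal X$ is a $P$-point of $C$ should rule out $\mathcal X$ being meager in $C$, after which a copy of $\N^*$ contained in $C$ and disjoint from $C\setminus\mathcal X$ can be produced as before. Alternatively, one can cite a known perfect-set-type dichotomy for Borel subsets of $\b\N$. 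In either case the delicate point is exactly the interplay between Borelness, the Baire property relative to $\N^*$, and the non-closedness of $K(\b S)$.
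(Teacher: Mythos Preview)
Your reduction to ``find an infinite compact subset of $\mathcal X=r\cdot E(R)$'' is exactly right, and your observation that compact subsets of a $P$-space are finite is the same one the paper uses. The gap is in how you propose to locate that infinite compact set. The general statement you reduce to --- \emph{a non-scattered Borel subset of $\b\N$ contains a copy of $\N^*$} --- is not something you prove, and your case split does not close. The non-meager case is fine, but in the meager case your sketch (``every point of $\mathcal X$ is a $P$-point of $C$ should rule out $\mathcal X$ being meager in $C$'') does not go through as stated: if $\mathcal X\sub\bigcup_n F_n$ with each $F_n$ closed nowhere dense in $C$, then for $x\in F_k$ the set $C\setminus F_k$ is not a neighbourhood of $x$, so the $P$-point hypothesis gives you nothing directly. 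And even if you could show $\mathcal X$ is comeager in $C$, it is not clear how you would extract an infinite compact subset of $\mathcal X$ from that. The appeal to ``a known perfect-set-type dichotomy'' is too vague to count as a proof.

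The paper's argument sidesteps all of this with a cardinality count you never invoke. By Lemma~\ref{lem:Gdelta} there are $2^{\continuum}$ minimal left ideals, and $\mathcal X$ meets each one, so $|\mathcal X|=2^{\continuum}$. The cited Lemma~3.1 of \cite{H&S2} says every Borel subset of $\b\N$ (indeed of $\b S$ for countable $S$) is a union of at most $\continuum$ compact sets. If $\mathcal X$ were Borel, pigeonhole would force one of these compact pieces to be infinite, contradicting the $P$-space property. Your argument would be repaired immediately by inserting the cardinality bound $|\mathcal X|=2^{\continuum}$ and citing that lemma; the Baire-category manoeuvres are unnecessary.
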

\begin{proof} Since $q\cdot E(R)=E(R)$ if $q\in E(R)$, it suffices to establish the second conclusion.
By (a very special case of) Lemma \ref{lem:Gdelta}, there are $2^{\continuum}$ minimal left ideals in $\beta S$ and $q\cdot E(R)$
meets each of them, so $|q\cdot E(R)|=2^{\continuum}$.
Theorem~\ref{thm:Pspace} implies that any compact subset of $q \cdot E(R)$ is finite (because every 
subspace of a $P$-space is a $P$-space, but infinite compact spaces are never $P$-spaces by \cite[Exercise 4K1]{GJ}).
Applying Lemma 3.1 from \cite{H&S2}, this implies that $q \cdot E(R)$ is not Borel in $\b S$.
(Lemma 3.1 of \cite{H&S2} says that any Borel subset of $\beta\ben$ is the union of at most
$\continuum$ compact sets, but the proof only uses the fact that $\ben$ is countable.)
\end{proof}

Once again, we note that weak cancellativity alone is not enough to prove 
Theorem~\ref{thm:Pspace} or Corollary~\ref{cor:Borel}. The semigroup $(\N,\vee)$ is weakly cancellative, 
but it has a single minimal right ideal, namely $\N^*$ itself, and $E(\ben^*)=\ben^*$. Clearly $\N^*$ is not a $P$-space, and it is Borel in $\b\N$.

\section{A negative result}\label{sec:boo}

In this final section, we address the natural question of whether, given a $\Theta$-maximal filter $\F$ 
and a $\Sigma$-maximal filter $\G$, their union $\F \cup \G$ must generate an ultrafilter. We show that
it is consistent with \zfc that the answer is negative.
More precisely, we will use the hypothesis  $\pseudo = \continuum$ (a weak form of Martin's Axiom) to 
construct a $\Theta$-maximal filter $\F$ on $\N$ and a $\Sigma$-maximal filter $\G$ on $\N$ such 
that $\F \cup \G$ does not generate an ultrafilter. 

In light of Lemma~\ref{lem:thetamaximal}, the assertion that some such $\F$ and $\G$ exist is equivalent to the assertion that there is a minimal left ideal $L$ and a $\Sigma$-maximal filter $\G$ such that $L \cap \widehat \G$ contains more than one point. In other words, it is equivalent to the assertion that not every $\Sigma$-maximal filter corresponds to a closed transversal for the minimal left ideals.

The hypothesis $\pseudo=\continuum$ is used indirectly in order to invoke a result from \cite{Brian} to prove Lemma~\ref{lem:ptheta} below. 
In order to keep this section relatively self-contained, we also include a (short) derivation of Lemma~\ref{lem:ptheta} from \ch. This latter hypothesis is stronger (i.e., \ch implies $\pseudo=\continuum$) so that proving the result from $\pseudo=\continuum$ is ``better'' in some sense. But either hypothesis is, of course, adequate to establish consistency with \zfc, and the reader who wishes to do so may ignore any further mention of $\pseudo$ and $\continuum$ and read this section as a self-contained proof carried out in $\zfc+\ch$.

Let us say that $A \sub \N$ is \emph{nicely thick} if it is thick and, for every minimal left ideal $L$, either $L \sub A^*$ or $L \cap A^* = \0$.

\begin{lemma}
Suppose $\set{I_n}{n \in \N}$ is a partition of $\N$ into intervals such that $\lim_{n \to \infty}\card{I_n} = \infty$. 
For every infinite $A \sub \N$, the set $\bigcup \set{I_n}{n \in A}$ is nicely thick.
\end{lemma}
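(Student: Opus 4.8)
The plan is to verify the two conditions in the definition of \emph{nicely thick} for $A^\dagger := \bigcup\set{I_n}{n \in A}$, given an infinite $A \sub \N$. First, thickness: since $\lim_{n\to\infty}\card{I_n} = \infty$ and $A$ is infinite, for any prescribed length $m$ there is some $n \in A$ with $\card{I_n} \geq m$, and the interval $I_n$ (being an interval, hence of the form $\{a, a+1, \ldots, a+\card{I_n}-1\}$) is contained in $A^\dagger$; translating a length-$m$ starting block appropriately, we get $F + x \sub A^\dagger$ for any $F \in \pf(\N)$ with $\max F < m$. So $A^\dagger$ is thick. (Alternatively, one can cite that $A^\dagger$ contains arbitrarily long intervals, which is the characterization of thickness in $(\N,+)$ recorded earlier in the paper.)

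The heart of the matter is the second condition: for every minimal left ideal $L$, either $L \sub (A^\dagger)^*$ or $L \cap (A^\dagger)^* = \0$. Here I would invoke Theorem~\ref{thm:npartition} with $n = 2$, applied to the partition $\set{Z_1, Z_2}$ of $\N$ where $Z_1 = A$ and $Z_2 = \N \setminus A$, and the given interval partition $\seq{I_t}{t \in \N}$. That theorem tells us $X_1 = A^\dagger$ and $X_2 = \N \setminus A^\dagger$ (up to relabeling indices by the $I_t$'s), and that \emph{every} minimal left ideal $L$ of $\beta\N$ satisfies $L \sub \closure{X_1}$ or $L \sub \closure{X_2}$. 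Since $X_1$ and $X_2$ partition $\N$, we have $\closure{X_1} = X_1 \cup X_1^*$ and similarly for $X_2$, and the two closures meet only inside $\N$ (indeed $X_1^* \cap X_2^* = \0$ because $X_1 \cap X_2 = \0$). As $L \sub \N^*$, the alternative $L \sub \closure{X_1}$ forces $L \sub X_1^* = (A^\dagger)^*$, while $L \sub \closure{X_2}$ forces $L \cap (A^\dagger)^* \sub L \cap X_1^* \cap X_2^* = \0$. This is exactly the dichotomy required.

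The main obstacle — really the only non-bookkeeping point — is making sure the hypotheses of Theorem~\ref{thm:npartition} are genuinely met and that its conclusion is applied correctly. The theorem requires $\lim_{t\to\infty}\card{I_t} = \infty$, which is given; it requires $\seq{Z_j}{j=1}^n$ to be a partition of $\N$, which $\set{A, \N\setminus A}$ is; and one should note that the theorem does \emph{not} need the $Z_j$ to be infinite for the first conclusion (the infinitude of $Z_j$ is only invoked for the ``contains a minimal left ideal'' clause, which we do not need for the dichotomy, though here $Z_1 = A$ is infinite anyway). One small technical care: Theorem~\ref{thm:npartition} as stated presumes $\max I_t + 1 = \min I_{t+1}$, i.e., that the intervals are consecutive and exhaust $\N$ in order; our partition into intervals satisfies this after the (harmless) reindexing already built into the statement. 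With these checks in place, the proof is a two-line application of Theorem~\ref{thm:npartition} plus the elementary observation that disjoint subsets of $\N$ have disjoint closures in $\N^*$.
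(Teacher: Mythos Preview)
Your proof is correct and follows exactly the paper's approach: the paper's own proof is the single sentence ``This is an immediate consequence of Theorem~\ref{thm:npartition} with $n=2$, $Z_1=A$, and $Z_2=\N\setminus A$,'' which is precisely what you do, with the bookkeeping spelled out. Your additional remarks on thickness and on the consecutive-interval assumption are accurate but unnecessary, since thickness is covered by the second conclusion of Theorem~\ref{thm:npartition} (as $Z_1=A$ is infinite) and the ordering assumption is a harmless WLOG made inside that theorem's proof rather than a hypothesis of its statement.
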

\begin{proof} This is an immediate consequence of Theorem \ref{thm:npartition}
with $n=2$, $Z_1=A$, and $Z_2=\ben\setminus A$.
\end{proof}

\begin{lemma}\label{lem:nice}
For every thick set $A$, there is a nicely thick $B \sub A$. Furthermore, there is some such $B$ with the property that $A^* \setminus B^*$ contains a minimal left ideal.
\end{lemma}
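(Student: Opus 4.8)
The idea is to start from a thick set $A$ and ``trim'' it along a suitable partition of $\N$ into long intervals so that the result becomes nicely thick while still being a subset of $A$. Fix a partition $\set{I_n}{n \in \N}$ of $\N$ into intervals with $\lim_{n\to\infty}\card{I_n}=\infty$; by the previous lemma, any set of the form $\bigcup\set{I_n}{n\in D}$ with $D$ infinite is nicely thick. So it suffices to find an infinite $D \sub \N$ with $\bigcup\set{I_n}{n\in D}\sub A$. But this need not be possible for the given $A$ (e.g.\ $A$ could omit one point from each $I_n$), so instead I would first pass to a thick subset of $A$ that \emph{is} a union of intervals. The point is that thickness of $A$ gives, for each $k$, some $x$ with $\{x,x+1,\dots,x+k-1\}\sub A$; choosing such a run of length $\card{I_n}$ for larger and larger $n$ and translating $I_n$ onto it, one extracts an infinite $D$ and translations $t_n$ with $I_n + t_n \sub A$. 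Then $B_0 = \bigcup\set{I_n + t_n}{n\in D}$ is thick, but it is a union of translated intervals rather than of the original $I_n$'s, so the previous lemma does not apply verbatim.

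To fix this, I would instead invoke Theorem~\ref{thm:npartition} directly with a partition of $\N$ into intervals $\langle J_t\rangle$ adapted to $A$: thickness of $A$ lets us recursively choose disjoint intervals $K_1, K_2, \dots \sub A$ with $\card{K_t}\to\infty$, and then let $\langle J_t\rangle$ be the partition of $\N$ obtained by listing, in order, the $K_t$'s together with the ``gap'' intervals between them. Applying Theorem~\ref{thm:npartition} with $n=2$, $Z_1 = \{t : J_t = K_s \text{ for some }s\}$ (infinite), and $Z_2$ its complement, we get that $B = \bigcup_{t\in Z_1} J_t = \bigcup_s K_s$ is thick and that every minimal left ideal is contained in $\overline{B}$ or in $\overline{\N\setminus B}$; the first disjunction, together with Lemma~\ref{lem:BHM}, is precisely nice thickness, and by construction $B\sub A$. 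This proves the first sentence.

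For the ``furthermore'' clause I need, in addition, that $A^*\setminus B^*$ contains a minimal left ideal, i.e.\ that $A\setminus B$ is thick (by Lemma~\ref{lem:BHM}). This is where I would spend a little care: I should carry out the recursive choice of the $K_t$'s inside $A$ so that infinitely much of $A$'s thickness is ``left over.'' Concretely, enumerate the requirement ``$A$ contains a run of length $k$'' and, at stage $t$, pick \emph{two} disjoint runs of length $\geq t$ in $A$ (disjoint from everything chosen so far) — call them $K_t$ and $K'_t$ — putting only the $K_t$'s into $B$ and reserving the $K'_t$'s. Thickness of $A$ makes this possible since after finitely many choices only finitely much of $A$ has been used, and $A\setminus(\text{finite})$ is still thick. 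Then $B = \bigcup_t K_t$ is nicely thick and $\sub A$ as before, while $A\setminus B \supseteq \bigcup_t K'_t$ contains runs of every length, hence is thick, so $A^*\setminus B^* \supseteq (A\setminus B)^*$ contains a minimal left ideal by Lemma~\ref{lem:BHM}.

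\textbf{Main obstacle.} The only real subtlety is bookkeeping in the recursion: one must interleave the $K_t$'s, the reserved $K'_t$'s, and the gap intervals so that the $K_t$'s together with the gaps genuinely form a partition of $\N$ into intervals with lengths tending to infinity (so Theorem~\ref{thm:npartition} applies), while simultaneously ensuring $B\sub A$ and $A\setminus B$ thick. None of this is deep, but it needs to be set up so that the interval-lengths of the partition — not just of the $K_t$'s — go to infinity; padding the gap intervals or grouping consecutive short gaps handles this.
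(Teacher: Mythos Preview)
Your approach is correct and essentially the same as the paper's: construct a partition of $\N$ into intervals with lengths tending to infinity such that a designated infinite subfamily of the intervals lies inside $A$, then apply the previous lemma (i.e., Theorem~\ref{thm:npartition} with $n=2$). The paper's treatment of the ``furthermore'' clause is marginally slicker---having arranged that the even-indexed intervals lie in $A$, it takes $B=\bigcup\{I_n:4\mid n\}$, so that $A\setminus B$ already contains the thick set $\bigcup\{I_n:n\equiv 2\pmod 4\}$---but your reserved-$K'_t$ variant works just as well.
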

\begin{proof}
Suppose $A$ is thick. By a routine recursion argument, we may pick a sequence $I_1, I_2, I_3, \dots$ of intervals such that
\begin{itemize}
\item $\set{I_n}{n \in \N}$ is a partition of $\N$,
\item $\lim_{n \to \infty}\card{I_n} = \infty$, and
\item $\bigcup \set{I_n}{n \text{ is even}} \sub A$.
\end{itemize}
By the previous lemma, $B = \bigcup \set{I_n}{n \text{ is even}}$ is a nicely thick subset of $A$. For the second assertion, take $B = \bigcup \set{I_n}{n \text{ is a multiple of }4}$ instead. 
Then $A \setminus B$ contains the (nicely) thick set $\bigcup \set{I_n}{n \text{ is an odd multiple of }2}$, which implies that $A^* \setminus B^* = (A \setminus B)^*$ contains a minimal left ideal by Lemma~\ref{lem:BHM}.
\end{proof}

Let us say that $X \sub \N^*$ is \emph{left-separating} if for every minimal left ideal $L$, either $L \sub X$ or $L \cap X = \0$. Thus, if $A \sub \N$ then (by definition) $A^*$ is left-separating if and only if $A$ is nicely thick.

\begin{lemma}\label{lem:separating}
The collection of all left-separating subsets of $\N^*$ is closed under arbitrary unions and intersections, and under taking relative complements. 
\end{lemma}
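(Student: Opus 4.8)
The plan is to exploit the fact, recorded in the introduction via the structure theorem, that the minimal left ideals of $\b\N$ are pairwise disjoint. Consequently a set $X \sub \N^*$ is left-separating precisely when, for \emph{each} minimal left ideal $L$, the trace $L \cap X$ equals either $\0$ or $L$; equivalently, $X \cap K(\b\N)$ is a union of minimal left ideals (the minimal left ideals forming a partition of $K(\b\N)$). I would open the proof by making this reformulation explicit, since it reduces each of the three closure claims to a routine case analysis performed one minimal left ideal at a time.

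Fix an arbitrary minimal left ideal $L$; it suffices in every case to check that $L$ meets the relevant set in $\0$ or in $L$. For a family $\set{X_i}{i \in I}$ of left-separating sets: if $L \sub X_i$ for some $i$ then $L \sub \bigcup_i X_i$, while otherwise $L \cap X_i = \0$ for all $i$ and so $L \cap \bigcup_i X_i = \0$; dually, if $L \sub X_i$ for all $i$ then $L \sub \bigcap_i X_i$, while otherwise $L \cap X_i = \0$ for some $i$ and so $L \cap \bigcap_i X_i = \0$. For a relative complement $X \setminus Y$ with $X$ and $Y$ left-separating, split according to the behavior of $L$ relative to $X$ and to $Y$: if $L \cap X = \0$ or $L \sub Y$, then $L \cap (X \setminus Y) = \0$; and if $L \sub X$ and $L \cap Y = \0$, then $L \sub X \setminus Y$. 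These four subcases are exhaustive because each of $X$ and $Y$ is left-separating, so in every instance $L \cap (X \setminus Y) \in \{\0, L\}$. (Taking $X = \N^*$, which is trivially left-separating, recovers the statement that $\N^* \setminus Y$ is left-separating whenever $Y$ is.)

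There is no real obstacle here: the entire content is the observation that the minimal left ideals are disjoint, after which everything is bookkeeping. The only point requiring a moment's care is the relative-complement case, where one must confirm that the listed subcases genuinely cover all possibilities — this is exactly where the left-separating hypothesis on \emph{both} $X$ and $Y$ is used.
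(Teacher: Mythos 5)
Your proof is correct and follows essentially the same route as the paper's: fix an arbitrary minimal left ideal $L$ and check by cases that $L$ meets the union, intersection, or relative complement in either $\0$ or all of $L$. The paper writes out only the union case and declares the others similar, so your fully spelled-out case analysis (including the relative-complement bookkeeping) is just a more explicit version of the same argument.
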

\begin{proof}
Let $\mathcal X$ be a collection of left-separating subsets of $\N^*$, and let $L$ be a minimal left ideal. Either $(1)$ some $X \in \mathcal X$ contains $L$, in which case $\bigcup \mathcal X$ contains $L$, or $(2)$ no $X \in \mathcal X$ contains $L$, in which case $L \cap \bigcup \mathcal X = \0$. As $L$ was arbitrary, $\bigcup \mathcal X$ is left-separating. A similar argument shows the collection of left-separating subsets of $\N^*$ is closed under arbitrary intersections and taking relative complements.
\end{proof}

\begin{lemma}\label{lem:ptheta}
Assume $\pseudo = \continuum$ (or \ch). If $\a$ is an ordinal with $\a < \continuum$ and $\seq{A_\b}{\b < \a}$ is a 
sequence of thick subsets of $\ben$, well-ordered in type $\a$, such that $A_\b^* \supseteq A_\g^*$ whenever $\b < \g$, then there is a thick set $A_\a$ such that $A_\b^* \supseteq A_\a^*$ for all $\b < \a$.
\end{lemma}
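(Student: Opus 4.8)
The goal is to find a thick set $A_\a$ that (up to $*$) sits below every $A_\b$ with $\b<\a$. The natural first move is to reduce to the case where each $A_\b$ is \emph{nicely thick}: by Lemma~\ref{lem:nice} we may shrink each $A_\b$ to a nicely thick $B_\b\subseteq A_\b$, but we must do this coherently so that $B_\b^*\supseteq B_\g^*$ is preserved. A cleaner route is to work directly with the left-separating ``hulls'': replace each $A_\b^*$ by $\bigcap\set{C^*}{C\text{ nicely thick},\ C^*\supseteq A_\b^*}$, or simply note that since $A_\b$ is thick it contains a nicely thick set, and pass to a decreasing (mod finite) sequence of \emph{nicely thick} sets $\seq{B_\b}{\b<\a}$ with the same $*$-closures ordered the same way. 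Because nicely thick sets correspond exactly to left-separating clopen subsets of $\N^*$ (Lemma~\ref{lem:separating}), finite Boolean combinations stay in this class, so we have genuine freedom to take pseudo-intersections within it.

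Once we are in the realm of nicely thick sets, the point is that the $*$-ordering $B_\b^*\supseteq B_\g^*$ is, modulo the ideal of non-nicely-thick perturbations, a ${\subseteq}^*$-type ordering on subsets of $\N$. Concretely, fix a partition of $\N$ into intervals $\set{I_n}{n\in\N}$ with $\card{I_n}\to\infty$; by the first lemma of Section~\ref{sec:boo}, sets of the form $\bigcup\set{I_n}{n\in X}$ for infinite $X\subseteq\N$ are nicely thick, and two such sets satisfy $(\bigcup_{n\in X}I_n)^*\supseteq(\bigcup_{n\in Y}I_n)^*$ iff $Y\subseteq^* X$. After arranging (again via Lemma~\ref{lem:nice} and a recursion) that each $B_\b$ is of this interval-block form, say $B_\b=\bigcup\set{I_n}{n\in X_\b}$, the hypothesis becomes: $\seq{X_\b}{\b<\a}$ is a $\subseteq^*$-decreasing sequence of infinite subsets of $\N$ of length $\a<\continuum=\pseudo$. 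Now invoke the definition of $\pseudo$: any $\subseteq^*$-decreasing (indeed any centered) family of size $<\pseudo$ has an infinite pseudo-intersection $X_\a$. Set $A_\a=\bigcup\set{I_n}{n\in X_\a}$; this is nicely thick, hence thick, and $X_\a\subseteq^* X_\b$ gives $A_\a^*\subseteq A_\b^*$ for all $\b<\a$. (If instead one assumes \ch, then $\a$ is countable and the pseudo-intersection is obtained by the usual diagonal argument, with no appeal to $\pseudo$.)

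**The main obstacle.** The delicate step is not the cardinal-characteristic endgame but the \emph{reduction to interval-block form}: given an arbitrary $\subseteq$-decreasing-mod-$*$ sequence of thick sets, we must replace it by one of the very rigid form $\bigcup_{n\in X_\b}I_n$ without disturbing the ordering. For a single fixed interval partition this may be impossible --- a nicely thick set need not be a union of blocks of a \emph{prescribed} partition --- so one should instead argue more flexibly: use Lemma~\ref{lem:nice} to replace $A_0$ by a nicely thick $B_0$ with $B_0^*\subseteq A_0^*$ and such that, along a suitable interval partition adapted to $B_0$, each $B_0\cap A_\b$ remains thick (this uses that an infinite intersection of the form $L\subseteq A_\b^*$ persists down chains, Lemma~\ref{lem:BHM}), and then read off the index sets $X_\b=\set{n}{I_n\subseteq A_\b}$. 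The work is in checking that $X_\b$ is infinite for every $\b$ (equivalently $B_0\cap A_\b$ is still thick), which is exactly where one uses that the $A_\b$ form a decreasing chain whose ``cores'' (the minimal left ideals they contain, by Lemma~\ref{lem:BHMagain}) can be tracked simultaneously. Modulo that bookkeeping, which is where \pseudo$\,=\,$\continuum (through the result of \cite{Brian} invoked for this lemma) does the real lifting, the conclusion follows.

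Here is the skeleton I would write out:
\begin{enumerate}
\item Reduce to $A_\b$ nicely thick, decreasing mod $*$, using Lemma~\ref{lem:nice} and Lemma~\ref{lem:separating}.
\item Fix an interval partition $\set{I_n}{n\in\N}$ with $\card{I_n}\to\infty$ adapted to the chain, and set $X_\b=\set{n}{I_n\subseteq^* A_\b}$; show each $X_\b$ is infinite and $X_\g\subseteq^* X_\b$ for $\b<\g$ (here is where the cited result / \pseudo$\,=\,$\continuum enters).
\item Take an infinite pseudo-intersection $X_\a$ of $\seq{X_\b}{\b<\a}$ (using $\a<\pseudo=\continuum$, or a diagonal argument under \ch).
\item Put $A_\a=\bigcup\set{I_n}{n\in X_\a}$; conclude $A_\a$ is (nicely) thick and $A_\a^*\subseteq A_\b^*$ for all $\b<\a$ from $X_\a\subseteq^* X_\b$.
\end{enumerate}
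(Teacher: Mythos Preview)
Your proposal is really two things tangled together: a deference to the cited result from \cite{Brian} (which is exactly what the paper does for the $\pseudo=\continuum$ case), and a sketch of an independent argument via ``interval-block form.'' The first is fine and matches the paper. The second has a genuine gap, and you yourself flag it: there is no way, in general, to choose a \emph{single} interval partition $\{I_n\}$ so that every $A_\b$ in an uncountable chain contains infinitely many full blocks. A thick set can easily miss every block of a prescribed partition (take $I_n=[2^n,2^{n+1})$ and let $A$ consist of the middle halves of the $I_n$). Your steps (1)--(2) therefore cannot be carried out as stated; when you write that ``here is where the cited result / $\pseudo=\continuum$ enters,'' you are effectively invoking the very lemma you are trying to prove, so steps (3)--(4) become redundant. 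The honest version of your argument is just: cite \cite{Brian}, which is what the paper does.

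For the \ch\ case, the paper's proof is far more direct than your route and avoids the block-reduction entirely. Since $\a$ is countable, enumerate $\{A_\delta:\delta<\a\}$ as $\langle B_n\rangle_{n\geq 1}$, set $C_n=\bigcap_{t\leq n}B_t$ (still thick, since the chain is $\subseteq^*$-decreasing, so finite intersections differ from a single term by a finite set), pick for each $n$ an interval of length $n$ inside $C_n$, and let $A_\a$ be the union of these intervals. Then $A_\a$ is visibly thick, and $A_\a\setminus A_\delta$ is finite for each $\delta$. No partition, no nicely-thick reduction, no encoding --- just the classical diagonal pseudo-intersection argument, with ``interval of length $n$'' replacing ``single point'' to preserve thickness. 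Your sketch gestures at this (``the usual diagonal argument'') but buries it under machinery that is both unnecessary and, as noted above, not actually available.
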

\begin{proof}
This follows from \cite[Theorem 3.4]{Brian}.

More precisely, in \cite{Brian} the cardinal number $\tower_\Theta$ is defined to be the least 
cardinal $\k$ such that the conclusion of the present lemma is true for all $\a < \k$. 
Thus the present lemma can be rephrased as follows: \emph{if $\pseudo = \continuum$, then $\tower_\Theta = \continuum$.} 
But Theorem 3.4 in \cite{Brian} asserts that $\tower_\Theta = \tower$, and it is known that 
$\pseudo \leq \tower \leq \continuum$. Hence $\pseudo = \continuum$ implies 
$\tower_\Theta = \continuum$, as claimed.
\end{proof}

\begin{proof}[Proof of Lemma~\ref{lem:ptheta} from \ch]
If $\alpha=\delta+1$, let $A_\alpha=A_\delta$.  So assume $\alpha$
is a (nonzero) limit ordinal. We claim that for each $F\in\pf(\alpha)$,
$\bigcap_{\delta\in F}A_\delta$ is thick. For such $F$, let $\gamma=\max F$.
Then $A_\gamma^*\subseteq \bigcap_{\delta\in F}A_\delta^*=(\bigcap_{\delta\in F}A_\delta)^*$
so $G=A_\gamma\setminus\bigcap_{\delta\in F}A_\delta$ is finite. Since
$A_\gamma$ is thick, by Lemma \ref{lem:thickminus}, $A_\gamma\setminus G$ is thick
and $A_\gamma\setminus G\subseteq\bigcap_{\delta\in F}A_\delta$.

Now $\a$ is countable, by \ch. Thus we may enumerate $\{A_\delta:\delta<\alpha\}$ as $\langle B_n\rangle_{n=1}^\infty$.
For each $n$, let $C_n=\bigcap_{t=1}^nB_n$.  Then each $C_n$ is thick.
For $n\in\ben$, pick $x_n\in\ben$ such that $\{x_n+1,x_n+2,\ldots,x_n+n\}\subseteq C_n$
and let $A_\alpha=\bigcup_{n=1}^\infty \{x_n+1,x_n+2,\ldots,x_n+n\}$.
Then $A_\alpha$ is thick. Given $\delta<\alpha$, pick $n\in\ben$ such that $A_\delta=B_n$.
Then $A_\alpha\setminus A_\delta\subseteq \bigcup_{t=1}^{n-1}\{x_t+1,x_t+2,\ldots,x_t+t\}$,
so $A_\alpha^*\subseteq A_\delta^*$.
\end{proof}

\begin{lemma}\label{lem:2sets}
Assuming $\pseudo = \continuum$ (or \ch), there exist disjoint open subsets $U$ and $V$ of $\N^*$ such that
\begin{itemize}
\item $U$ and $V$ are left-separating.
\item there is a minimal left ideal $L$ such that
\begin{itemize}
\item[$\circ$] $L$ has a neighborhood basis of left-separating clopen sets; i.e., for every open set $W$ containing $L$, there is a nicely thick $A \sub \N$ with $L \sub A^* \sub W$.
\item[$\circ$] $L \sub \closure{U \cap K(\b\N)}$.
\item[$\circ$] $L \sub \closure{V \cap K(\b\N)}$.
\end{itemize}
\end{itemize}
\end{lemma}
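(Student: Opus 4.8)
The plan is to build the minimal left ideal $L$ first, as an intersection $\bigcap_{\xi<\continuum}A_\xi^*$ of clopen neighbourhoods coming from nicely thick sets, and then to carve $U$ and $V$ out of the minimal left ideals clustering around $L$ in a second transfinite recursion.

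For the first part I would fix an enumeration $\seq{C_\xi}{\xi<\continuum}$ of $\P(\N)$ and recursively choose nicely thick sets $A_\xi$ with $A_{\eta}^*\sub A_\xi^*$ for $\xi<\eta$, using Lemma~\ref{lem:ptheta} to pass through limit stages and Lemma~\ref{lem:nice} to keep each choice nicely thick (and to reserve extra room). At a successor stage I would take $A_{\xi+1}$ to be a nicely thick subset of $A_\xi\cap C_\xi$ whenever $A_\xi\cap C_\xi$ is thick, and a nicely thick proper subset of $A_\xi$ otherwise. Put $L=\bigcap_{\xi<\continuum}A_\xi^*$. The family $\set{A_\xi}{\xi<\continuum}$ generates a filter contained in $\Theta$, so $L$ contains a minimal left ideal $L_0$ by Lemma~\ref{lem:BHMagain}; and the decisive point is that $L\sub A_\xi^*$ for every $\xi$, so whenever $A_\xi\cap C_\xi$ failed to be thick we must have $L_0\not\sub C_\xi^*$ (else $L_0\sub(A_\xi\cap C_\xi)^*$, which has no minimal left ideal). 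Hence for every $C$ with $L_0\sub C^*$, at the stage $\xi$ with $C_\xi=C$ we were in the thick case, so $A_{\xi+1}^*\sub C^*$; this forces $L\sub\bigcap\set{C^*}{L_0\sub C^*}=L_0$, so $L=L_0$ is a single minimal left ideal, and by compactness $\set{A_\xi^*}{\xi<\continuum}$ is a basis of clopen, left-separating neighbourhoods of $L$ (each $A_\xi$ being nicely thick). I am fairly confident of this part.

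For the second part I would use three facts about this $L$. First, $L\sub\closure{K(\b\N)\setminus L}$: for $p\in L$ the structure theory gives $p\in\closure{p\cdot E(R)}$ with $\closure{p\cdot E(R)}\cap L=\{p\}$ (Theorem~\ref{thm:transversals}), while $p\cdot E(R)$ has no isolated points (Lemma~\ref{lem:butterfly}), so every neighbourhood of $p$ already meets $p\cdot E(R)\setminus\{p\}\sub K(\b\N)\setminus L$. Second, refining this with the fact that a thick set's closure contains $2^{2^{\aleph_0}}$ minimal left ideals (Lemma~\ref{lem:Gdelta}) each meeting $p\cdot E(R)$, every basic clopen neighbourhood $D^*$ of a point of $L$ meets $2^{2^{\aleph_0}}$ pairwise distinct minimal left ideals other than $L$. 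Third, if $M\neq L$ is a minimal left ideal then $M\cap A_\xi^*=\0$ for some $\xi$ by compactness, so for any $D$ with $M\sub D^*$ the set $D\setminus A_\xi$ is thick and every nicely thick $P\sub D\setminus A_\xi$ has $P^*\cap L=\0$ (since $P^*\sub\N^*\setminus A_\xi^*$ and $L\sub A_\xi^*$). Now I would enumerate $\set{D_\alpha}{\alpha<\continuum}$, the $D\sub\N$ with $L\cap D^*\neq\0$, and at stage $\alpha$ pick two distinct minimal left ideals $\neq L$ meeting $D_\alpha^*$, a level $\xi_\alpha$ past both of their ``transition levels'' in the tower, and (via the ``furthermore'' clause of Lemma~\ref{lem:nice}) disjoint nicely thick $P_\alpha,Q_\alpha\sub D_\alpha\setminus A_{\xi_\alpha}$; then $P_\alpha^*$ and $Q_\alpha^*$ are clopen, left-separating, miss $L$, meet $K(\b\N)$, and lie in $D_\alpha^*$. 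Taking $U=\bigcup_\alpha P_\alpha^*$ and $V=\bigcup_\alpha Q_\alpha^*$, both are open and left-separating (Lemma~\ref{lem:separating}), both miss $L$, and since every basic neighbourhood of a point of $L$ is some $D_\alpha^*$, we get $L\sub\closure{U\cap K(\b\N)}$ and $L\sub\closure{V\cap K(\b\N)}$.

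The hard part is keeping $U$ and $V$ disjoint while doing all of this. The pieces $P_\alpha,Q_\alpha$ live inside the heavily overlapping $D_\alpha$, so naïvely some $P_\alpha^*$ will meet some $Q_\beta^*$; the requirement ``$L\sub\closure{U\cap K(\b\N)}$'' is a \emph{pointwise} closure condition that forces both $U\cap K(\b\N)$ and $V\cap K(\b\N)$ to have minimal left ideals arbitrarily close to \emph{every} point of $L$, while disjointness forbids the two families from sharing any minimal left ideal. I expect to resolve this by running the $(P_\alpha,Q_\alpha)$ recursion in lock-step with the tower recursion and \emph{routing} each newly needed minimal left ideal to the $U$-side or the $V$-side according to a fixed partition of the index set of ``shells'' $A_\xi^*\setminus A_{\xi+1}^*$, so that no minimal left ideal is ever eligible for both sides, while reserving fresh disjoint thick pieces at each stage (Lemma~\ref{lem:leftideals}) so that each routed family remains dense at every point of $L$. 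Controlling the minimal left ideals that ``escape'' at limit stages of the tower — those whose transition level is a limit ordinal and which therefore lie in no single shell — is the delicate point, and is where I expect the bulk of the bookkeeping to go.
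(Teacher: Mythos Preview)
Your construction of $L$ is essentially the paper's, and your argument that $L$ is a single minimal left ideal is fine. Where you diverge is in building $U$ and $V$: you propose a second transfinite recursion over basic neighborhoods of points of $L$, correctly flag disjointness as the obstacle, and then suggest routing minimal left ideals by the parity of their ``shell'' $A_\xi^*\setminus A_{\xi+1}^*$ --- while still worrying that ideals escaping at limit stages lie in no shell and will require extra bookkeeping.

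The paper shows that this worry, and the whole second recursion, are unnecessary: one simply \emph{defines} $U=\bigcup\{A_\xi^*\setminus A_{\xi+1}^*:\xi\text{ even}\}$ and $V=\bigcup\{A_\xi^*\setminus A_{\xi+1}^*:\xi\text{ odd}\}$. The shells are pairwise disjoint, clopen, and left-separating, so $U$ and $V$ are disjoint, open, and left-separating automatically; minimal left ideals that escape at limit stages lie in neither $U$ nor $V$, which is harmless since nothing in the statement requires $U\cup V$ to cover $K(\b\N)\setminus L$. Two small additions to your first recursion make this go through. First, invoke the ``furthermore'' clause of Lemma~\ref{lem:nice} at \emph{every} successor step, so that each shell $A_\xi^*\setminus A_{\xi+1}^*$ already contains a minimal left ideal. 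Second, for $L\subseteq\closure{U\cap K(\b\N)}$ you need not treat arbitrary basic neighborhoods of points of $L$: by Theorem~\ref{thm:basis} any such neighborhood contains some $A^*\cap B^*$ with $L\subseteq A^*$ and $B$ syndetic; by compactness $A_\xi^*\subseteq A^*$ for all large $\xi$, so pick an even $\xi$ beyond that point, take the minimal left ideal $L'$ guaranteed in that shell, and note $B^*\cap L'\neq\emptyset$ since $B$ is syndetic. The shell idea you already identified is the \emph{entire} second half of the argument, not a preliminary step toward one.
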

\begin{proof}
We begin the proof with the construction of a basis for a $\Theta$-maximal filter $\F$. Ultimately, $U$ and $V$ will be defined from our basis for $\F$, and $\widehat \F$ will be the minimal left ideal $L$ mentioned in the statement of the lemma.

To construct $\F$, fix an enumeration $\set{A_\a}{\a < \continuum}$ of $\Theta$. Using transfinite recursion, we will construct a well-ordered sequence $\seq{X_\a}{\a < \continuum}$ of nicely thick sets such that 
\begin{itemize}
\item $X_\a^* \supseteq X_\b^*$ whenever $\a < \b$,
\item $X_\a^* \setminus X_\b^*$ contains a minimal left ideal whenever $\a < \b$, and 
\item for every $\a < \continuum$, either $X_{\a+1}^* \sub A_\a^*$ or $X_{\a+1} \cap A_\a \notin \Theta$.
\end{itemize}
For the base stage of the recursion, set $X_0 = \N$. 

At the successor stage $\a+1$ of the recursion, assuming $X_\a$ has already been defined, there are two cases. If $X_\a \cap A_\a \notin \Theta$, then choose $X_{\a+1}$ to be any nicely thick subset of $X_\a$ such that $X_\a^* \setminus X_{\a+1}^*$ 
contains a minimal left ideal. (This is possible by Lemma~\ref{lem:nice}.) If $X_\a \cap A_\a \in \Theta$, then let $X_{\a+1}$ be some nicely thick set contained in $X_\a \cap A_\a$ with the property that $X_\a^* \setminus X_{\a+1}^*$ 
contains a minimal left ideal. (Again, this is possible by Lemma~\ref{lem:nice}.)

If $\a$ is a limit ordinal with $\a < \continuum$, then at stage $\a$ of the recursion we will have a sequence $\seq{X_\b}{\b < \a}$ of nicely thick sets such that $X_\b^* \supseteq X_\g^*$ whenever $\b < \g$. 
By Lemma~\ref{lem:ptheta}, there is a thick set $X_\a^0$ such that $X_\b^* \supseteq (X_\a^0)^*$ for all $\b < \a$. Let $X_\a$ be any nicely 
thick set contained in $X_\a^0$. (One exists by Lemma~\ref{lem:nice}.) This completes the recursion.

It is clear that $\set{X_\a}{\a < \continuum}$ is a filter base.
Let $\F$ be the filter generated by this base, and let 
$$\textstyle L = \widehat \F = \bigcap_{\a < \continuum}X_\a^*.$$
For each $\a < \continuum$, let $R_\a = X_\a^* \setminus X_{\a+1}^*$. Let
$$\textstyle U = \bigcup \set{R_\a}{\a < \continuum,\, \a \text{ even}}\hbox{ and let } 
V = \bigcup \set{R_\a}{\a < \continuum,\, \a \text{ odd}}$$
where, as usual, an ordinal is called even (respectively, odd) if it is equal to $\lambda+n$, where $\lambda$ is a limit ordinal and $n$ is even (respectively, odd). 
Thus $U$ and $V$ are formed each as the union of alternating clopen ``rings'' (the $R_\a$) from the nested sequence $X_0^* \supsetneq X_1^* \supsetneq X_2^* 
\supsetneq \dots \supsetneq X_\a^* \supsetneq X_{\a+1}^* \supsetneq \dots$ (where we ignore the non-clopen ``rings'' occurring at limit stages).

It is clear that $U$ and $V$ are disjoint open sets. That $U$ and $V$ are left separating follows from their definition and  Lemma~\ref{lem:separating}.

\begin{claim}
$L$ is a minimal left ideal.
\end{claim}
\begin{proof}[Proof of claim]
By Lemma~\ref{lem:thetamaximal}, it suffices to show that $\F$ is a $\Theta$-maximal filter.
If $A \in \Theta$, then $A = A_\a$ for some $\a < \continuum$. At stage $\a+1$ of our recursion, we ensured that either $X_{\a+1} \cap A_\a \notin \Theta$ or that $X_{\a+1} \sub A_\a$. Thus $A \in \Theta$ implies that either $A \in \F$ or $A \cap X \notin \Theta$ for some $X \in \F$. Thus there is no proper extension of $\F$ containing only thick sets.
\end{proof}

\begin{claim}
For every open $W \supseteq L$, there is a nicely thick $A \sub \N$ with $L \sub A^* \sub W$. In fact, there is some $\a < \continuum$ such that $L \sub X_\b^* \sub W$ for all $\a \leq \b < \continuum$.
\end{claim}
\begin{proof}[Proof of claim]
If $W$ is open and $W \supseteq L = \bigcap_{\a < \continuum}X_\a^*$, 
then, because this is a decreasing intersection of compact sets, there is some $\a < \continuum$ such that 
$L \sub X_\b^* \sub W$ for all $\b \geq \a$. (Otherwise, $\{X_\a^*\setminus W:\a < \continuum\}$
would be a set of closed sets with the finite intersection property, so would have
nonempty intersection.) Each $X_\b$ is nicely thick, so this proves the claim.
\end{proof}

\begin{claim}
$L \sub \closure{U \cap K(\b\N)}$, and
$L \sub \closure{V \cap K(\b\N)}$.\end{claim}
\begin{proof}[Proof of claim]
We will prove that $L \sub \closure{U \cap K(\b\N)}$ only, as the corresponding assertion for $V$ is proved in the same way.

Suppose $p \in L$ and let $W$ be a neighborhood of $p$. We must show that $W \cap U \cap K(\b\N) \neq \0$. 
By a previous claim, $L$ is a minimal left ideal. By Lemma \ref{lem:sigmamaximal} and Theorem \ref{thm:basis}, 
there are $A,B \sub \N$ such that $L \sub A^*$, $B$ is syndetic, and $p \in A^* \cap B^* \sub W$.

By the previous claim, there is some $\a < \continuum$ such that $X_\b^* \sub A^*$ for all $\b \geq \a$. Let $\b \geq \a$ be an even successor ordinal, so that 
$$R_\b = X_\b^* \setminus X_{\b+1}^* \sub A^* \cap U.$$

By construction, $R_\b$ contains a minimal left ideal $L'$. Then 
$$L' \sub R_\b \cap K(\b\N) \sub A^* \cap U \cap K(\b\N).$$
But $B$ is syndetic, which implies $B^* \cap L' \neq \0$ by Lemma~\ref{lem:BHM}. Hence
$$\0 \neq B^* \cap L' \sub B^* \cap A^* \cap U \cap K(\b\N) \sub W \cap U \cap K(\b\N).$$
This shows $W \cap U \cap K(\b\N) \neq \0$, as desired.
\end{proof}

These claims complete the proof of the lemma. 
\end{proof}

\begin{theorem}
Assuming $\pseudo = \continuum$ (or \ch), there is a $\Sigma$-maximal filter $\G$ on $\N$ and a $\Theta$-maximal filter $\F$ on $\N$ such that $\G \cup \F$ does not generate an ultrafilter.
\end{theorem}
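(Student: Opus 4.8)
The plan is to feed Lemma~\ref{lem:2sets} into a soft argument: it supplies disjoint open left‑separating $U,V\sub\N^*$ and a minimal left ideal $L$ with $L\sub\closure{U\cap K(\b\N)}\cap\closure{V\cap K(\b\N)}$ and a neighborhood basis of left‑separating clopen sets. Put $\F=\bigcap L$; since $\widehat\F=L$ is a minimal left ideal, $\F$ is $\Theta$‑maximal by Lemma~\ref{lem:thetamaximal}. It then suffices to build a $\Sigma$‑maximal filter $\G$ with $|\widehat\G\cap L|\ge 2$: for then $\F\cup\G$ generates a proper filter $\U$ (any thick set meets any syndetic set), and $\widehat\U=\widehat\F\cap\widehat\G=L\cap\widehat\G$ has at least two points, so $\U$ is not an ultrafilter. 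Everything reduces to producing a set $T\sub\N^*$ consisting of exactly one point $x_{L'}$ from each minimal left ideal $L'\neq L$ of $\b\N$, subject to ($\alpha$) $\closure T\cap L'=\{x_{L'}\}$ for every minimal left ideal $L'\neq L$ (taking the closure adds no new point inside such an $L'$), and ($\beta$) $\closure T\cap L$ contains at least two points.

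Granting such a $T$, I would set $\G=\bigcap\closure T$, so $\widehat\G=\closure T$. By ($\alpha$) and ($\beta$), $\closure T$ meets every minimal left ideal, so $\G\sub\Sigma$ by Lemma~\ref{lem:BHMagain}. For $\Sigma$‑maximality, suppose $\G\subsetneq\H$ with $\H\sub\Sigma$, and pick $A\in\H\setminus\G$; then some $z\in\closure T$ has $A\notin z$. Because $\H\sub\Sigma$, for each minimal left ideal $L'\neq L$ we have $\emptyset\neq\widehat\H\cap L'\sub\widehat\G\cap L'=\{x_{L'}\}$ by ($\alpha$), so $x_{L'}\in\widehat\H$ and hence $A\in x_{L'}$. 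Thus $A^*\supseteq\set{x_{L'}}{L'\neq L}=T$, and since $A^*$ is closed, $A^*\supseteq\closure T\ni z$, contradicting $A\notin z$. So $\G$ is $\Sigma$‑maximal, and $|\widehat\G\cap L|\ge2$ by ($\beta$).

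The construction of $T$ is the substance of the proof and is where Lemma~\ref{lem:2sets} (hence $\pseudo=\continuum$) is really used. I would work with the decreasing chain of nicely thick sets $\seq{X_\xi}{\xi<\continuum}$ with $\bigcap_\xi X_\xi^*=L$ underlying the proof of Lemma~\ref{lem:2sets}, the clopen ``rings'' $R_\xi=X_\xi^*\setminus X_{\xi+1}^*$ (alternately contained in $U$ and in $V$, each containing a minimal left ideal), and the ``limit rings'' $S_\lambda=\big(\bigcap_{\xi<\lambda}X_\xi^*\big)\setminus X_\lambda^*$; each minimal left ideal other than $L$ lies in exactly one of these regions. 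Requirement ($\alpha$) is handled locally: the regions are left‑separating and the $R_\xi$ are clopen, so inside each region it suffices to choose a relatively closed transversal of the minimal left ideals it contains, and Theorem~\ref{thm:transversals} supplies these — the restriction of a set $\closure{q\cdot E(R)}$ to the region is such a transversal, and choosing $q$ and $R$ appropriately even lets us prescribe the chosen point in any one minimal left ideal of the region. Requirement ($\beta$) is where $U$ and $V$ enter essentially: fixing distinct $p,q\in L$ (possible since $L$ has no isolated points, Lemma~\ref{lem:butterfly}), I would use $L\sub\closure{U\cap K(\b\N)}$, $L\sub\closure{V\cap K(\b\N)}$ and the clopenness of each $X_\xi^*$ to argue that for every neighborhood $W$ of $p$ and every $\xi_0$ there is an even $\xi>\xi_0$ with a minimal left ideal $\sub R_\xi$ meeting $W$; diagonalizing along the chain, I pick the transversal points in the even rings so they accumulate at $p$ and those in the odd rings so they accumulate at $q$. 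The disjointness of $U$ and $V$ keeps the two accumulation processes separate and, crucially, prevents $p$ and $q$ from being forced to coincide.

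The main obstacle I expect is the bookkeeping in this last transfinite recursion of length $\continuum$: one must simultaneously (i) prescribe, cofinally often in each of $U$ and $V$, transversal points approaching $p$ and $q$ respectively, (ii) realize those prescribed points inside a relatively closed transversal of each region, and (iii) control the closure across regions so that ($\alpha$) survives — the trouble spot being the limit rings $S_\lambda$, whose complements are only open rather than clopen, so that limit points of transversal points chosen at lower levels could in principle leak into minimal left ideals inside $S_\lambda$. Confirming that these requirements are jointly satisfiable, with the cofinality arguments correctly matched to the chain from Lemma~\ref{lem:2sets}, is the delicate part; the reduction in the first two paragraphs is then entirely routine.
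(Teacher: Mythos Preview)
Your reduction in the first two paragraphs is correct: a set $T$ satisfying $(\alpha)$ and $(\beta)$ would yield the desired $\Sigma$-maximal filter, and the $\Sigma$-maximality argument you give is sound. But the obstacle you flag at the end is genuine, not mere bookkeeping. To get $(\beta)$ for \emph{arbitrary} $p,q\in L$ by diagonalization you must prescribe a different target point in each ring, which forces the closed transversals in different rings to come from different sets $\closure{q_\xi\cdot E(R_\xi)}$; you then lose the global bound $\closure T\sub\closure{q\cdot E(R)}$ that Theorem~\ref{thm:transversals} would otherwise supply, and there is no evident mechanism preventing $\closure T$ from acquiring a second point inside some $L'\sub S_\lambda$. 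Since your $\Sigma$-maximality argument genuinely needs $(\alpha)$ for every $L'\neq L$, this is a real gap, and you have not indicated how to close it.

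The paper avoids the difficulty by relaxing both of your demands. It fixes two distinct minimal right ideals $R,R'$ and sets
\[
C_0=(U\cap\closure{E(R)})\cup(V\cap\closure{E(R')})\cup\bigl(\N^*\setminus(U\cup V)\bigr),
\]
so $C_0$ is a transversal only inside $U$ and inside $V$, and is simply \emph{everything} on the complement (in particular on all the limit rings). This still meets every minimal left ideal, so $\bigcap C_0\sub\Sigma$; Zorn's Lemma then yields a $\Sigma$-maximal $\G\supseteq\bigcap C_0$, with no direct maximality argument required. The two witnesses in $\widehat\G\cap L$ are the specific idempotents $e\in L\cap R$ and $e'\in L\cap R'$, not arbitrary points: a basic neighborhood of $e$ has the form $A^*\cap B^*$ with $L\sub A^*$ (and $A$ nicely thick) and $E(R)\sub B^*$, hence automatically contains the unique point $f$ of $C_0\cap L'$ for any $L'\sub A^*\cap U$; if $e\notin\widehat\G$ then such an $f\notin\widehat\G$, whence $\widehat\G\cap L'=\emptyset$, contradicting $\G\sub\Sigma$. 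Choosing $p=e$ and $q=e'$ is what eliminates your diagonalization, and dumping all of $\N^*\setminus(U\cup V)$ into $C_0$ is what eliminates the limit-ring problem.
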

\begin{proof}
Let $U$, $V$, and $L$ be as described in Lemma~\ref{lem:2sets}. Let $\F = \bigcap{L}$.

Let $R$ and $R'$ be two different minimal right ideals of $\N^*$, and define
$$C_0 = \big(U \cap \closure{E(R)}\big) \cup \big(V \cap \closure{E(R')}\big) \cup (\N^* \setminus (U \cup V)).$$
If $L'$ is a minimal left ideal, then because $U$ and $V$ are both left-separating, there are three possibilities:
\begin{enumerate}
\item $L' \sub U$, in which case $C_0 \cap L' = \closure{E(R)} \cap L' \neq \0$,
\item $L' \sub V$, in which case $C_0 \cap L' = \closure{E(R')} \cap L' \neq \0$, or
\item $L' \sub \N \setminus (U \cup V)$, in which case $L' \sub C_0$.
\end{enumerate} 
In any case, $C_0 \cap L' \neq \0$ for every minimal left ideal $L'$. 

Let $\G_0 = \bigcap C_0$, and observe that $\G_0 \sub \Sigma$ by the previous paragraph and Lemma~\ref{lem:BHMagain}. 
Using Zorn's Lemma, extend $\G_0$ to a $\Sigma$-maximal filter $\G$ and let $C=\widehat{\G}$.

We claim that $\F \cup \G$ does not generate an ultrafilter. We will prove the equivalent assertion that $\widehat \F \cap \widehat \G = L \cap C$ contains at least two points. 

Let $e$ denote the unique point of $L \cap E(R)$, and let $e'$ denote the unique point of $L \cap E(R')$. Because $R \neq R'$, $e \neq e'$, and we claim that $e,e' \in L \cap C$. We will prove only that $e \in L \cap C$, because the proof for $e'$ is the same. We know $e \in L$ already, so we must show $e \in C$.

Aiming for a contradiction, suppose $e \notin C$. Then there is some open $W$ such that $e \in W$ and $W \cap C = \0$. 
By Theorem~\ref{thm:basis}, there are $A,B \sub \N$ such that $L \sub A^*$ and $E(R) \sub B^*$ and $e \in A^* \cap B^* \sub W$. By our choice of $L$, we may (and do) assume that $A$ is nicely thick. By our choice of $L$ and $U$, we have $e \in \closure{U \cap K(\b\N)}$, so
$$A^* \cap B^* \cap U \cap K(\b\N) \neq \0.$$ 
Let $p \in A^* \cap B^* \cap U \cap K(\b\N)$, and let $L'$ be the minimal left ideal containing $p$. Observe that $L \cap U = \0$, because $L \sub \closure{V}$ and $U$ and $V$ are disjoint open sets. As $p \in U$ and $p \in L'$, this implies $L' \neq L$. Because $U$ and $A^*$ are both left-separating and $p \in A^* \cap U$, we have
$$L' \sub A^* \cap U \cap K(\b\N)$$
Recalling that $E(R) \sub B^*$, this shows that
$$L' \cap E(R) \sub A^* \cap B^* \cap U \cap K(\b\N).$$
Let $f$ denote the unique element of $L' \cap E(R)$. On the one hand, we just showed that $f \in A^* \cap B^* \sub W$, which implies $f \notin C$. On the other hand, $f \in U$ and $L' \sub U$, which implies that 
$$C_0 \cap L' = \closure{E(R)} \cap L' = \{f\}\,.$$
(The second equality comes from Theorem~\ref{thm:transversals}.) This shows that $C \cap L' = \0$. But $C = \widehat \G$ with $\G \sub \Sigma$, so this contradicts Lemma~\ref{lem:BHMagain}.
\end{proof}



\end{document}